\theoremstyle{plain}
\newtheorem{thm}{{\bf Theorem}}[section]
\newtheorem{cor}[thm]{{\bf  Corollary}}
\newtheorem{prop}[thm]{{\bf Proposition}}
\newtheorem{lemma}[thm]{{\bf Lemma}}
\newtheorem{fact}[thm]{{\bf Fact}}
\newtheorem{claim}[thm]{{\bf Claim}}
\theoremstyle{definition}
\newtheorem{define}[thm]{{\bf Definition}}
\newtheorem{question}[thm]{{\bf Question}}
\newtheorem{remark}[thm]{{\bf Remark}}
\newcommand{\cf}{\mathord{\mathrm{cf}}}
\newcommand{\dom}{\mathord{\mathrm{dom}}}
\newcommand{\size}[1]{\left\vert {#1} \right\vert}
\newcommand{\seq}[1]{\langle {#1} \rangle}
\newcommand{\ot}{\mathord{\mathrm{ot}}}
\newcommand{\col}{\mathord{\mathrm{Col}}}
\newcommand{\supp}{\mathrm{supp}}
\newcommand{\FRP}{\mathsf{FRP}}
\newcommand{\ZFC}{\mathsf{ZFC}}
\newcommand{\List}{\mathrm{List}}
\newcommand{\Refl}{\mathsf{RP}}
\newcommand{\Cof}{\mathrm{Cof}}
\newcommand{\coll}{\mathrm{Coll}}
\newcommand{\ka}{\kappa}
\newcommand{\la}{\lambda}
\newcommand{\om}{\omega}
\newcommand{\force}{{\Vdash}}
\newcommand{\bbP}{\mathbb{P}}
\newcommand{\bbQ}{\mathbb{Q}}
\newcommand{\bbS}{\mathbb{S}}
\newcommand{\calE}{\mathcal{E}}
\newcommand{\calF}{\mathcal{F}}
\newcommand{\calH}{\mathcal{H}}
\newcommand{\calV}{\mathcal{V}}
\newcommand{\relE}{\mathrel\mathcal E}
\newcommand{\restr}{\restriction}
\newcommand{\rest}{\restriction}
\title{The list-chromatic number and the coloring number of  uncountable graphs}
\author[T. Usuba]{Toshimichi Usuba}
\address[T. Usuba]
{Faculty of Science and Engineering,
Waseda University, 
Okubo 3-4-1, Shinjyuku, Tokyo, 169-8555 Japan}
\email{usuba@waseda.jp}
\keywords{Coloring number; Fodor-type reflection principle; List-crhomatic number; Reflection principle; Singular compactness}
\subjclass[2010]{03E05, 05C15,05C63}
\begin{document}

\begin{abstract}
We study the list-chromatic number and the coloring number of graphs, especially uncountable graphs.
We show that the coloring number of a graph coincides with
its list-chromatic number provided that the diamond principle holds.
Under the GCH assumption, we prove the singular compactness theorem for
the list-chromatic number.
We also investigate reflection principles for 
the list-chromatic number and the coloring number of graphs.
\end{abstract}

\maketitle
\section{Introduction}

Throughout this paper,  a \emph{graph} means a non-directed simple graph,
that is, a graph $X$ is a pair $\seq{\calV_X,\calE_X}$ where
$\calV_X$ is the set of vertexes and $\calE_X \subseteq [\calV_X]^2$ the set of edges.
We frequently identify $\calV_X$ with the graph $X$,
and if $X$ is clear from the context,
$\calE_X$ is denoted as $\calE$ for simplicity.
The cardinality of the graph $X$, denoted by $\size{X}$, is the cardinality of the vertex set $\calV_X$.
For a graph $X=\seq{X, \calE}$ and $x, y \in X$,
when $\{x,y\} \in \calE $ we write  $x \relE y$ or $y \relE x$.   
For $x \in X$, let $\calE^x=\{y \in X\mid y \relE x\}$.

\begin{define}
Let $X=\seq{X,\calE}$ be a graph.
\begin{enumerate}
\item A \emph{coloring of $X$} is a function on $X$.
A \emph{good coloring of $X$} is a coloring 
$f:X \to \mathrm{ON}$ such that
whenever $x\relE y$, we have $f(x) \neq f(y)$.
\item For a finite or an infinite cardinal $\ka$, 
a \emph{$\ka$-assignment of $X$} is a function
$F:X \to [\mathrm{ON}]^\ka$.
\item The \emph{list-chromatic number of $X$}, $\List(X)$,
is the minimal finite or infinite cardinal $\ka$
such that for every $\ka$-assignment $F:X \to [\mathrm{ON}]^\ka$,
there is a good coloring $f$ of $X$ with $f(x) \in F(x)$.
\item The \emph{coloring number of $X$}, $\col(X)$,
is the minimal finite or infinite cardinal $\ka$ such that $X$ admits 
a well-ordering $\triangleleft$
such that for every $x \in X$, we have $\size{\{y \in \calE^x\mid y \mathrel \triangleleft x\}}<\ka$.
\end{enumerate}
\end{define}

We know that 
$\List(X) \le \col(X)\le\size{X}$.
The coloring number was introduced by Erd\H{o}s-Hajnal \cite{EH}.
The list-chromatic number was done in Vizing \cite{Vizing} and Erd\H{o}s-Rubin-Taylor \cite{ERT} independently,
and Komj\'ath \cite{Komjath} studied the list-chromatic number of infinite graphs
extensively. See also Komj\'ath's survey \cite{Komjath2} for these numbers.
In this paper, we study further combinatorial properties of  the list-chromatic number and the coloring number
of uncountable graphs,
and we also investigate reflection principles for these numbers.

By Komj\'ath's work, it turned out that the difference between the list-chromatic number and the coloring number of infinite graphs is sensitive.
While it is consistent that there is a graph $X$ with $\col(X)>\List(X) \ge \om$, 
Komj\'ath \cite{Komjath} constructed a model of $\ZFC$ in which
$\col(X)=\List(X)$ holds
for every graph $X$ with infinite coloring number.
We show that this situation follows from the diamond principle, which gives another simple proof of Komj\'ath's result.
\begin{thm}\label{thm5}
Suppose that for every regular uncountable cardinal $\ka$
and stationary $S \subseteq \ka$, $\diamondsuit(S)$ holds (e.g., assume $V=L$).
Then for every graph $X$, if $\col(X)$ is infinite then
$\col(X)=\List(X)$.
\end{thm}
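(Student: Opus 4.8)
The plan is to prove the nontrivial inequality $\col(X)\le\List(X)$; since $\List(X)\le\col(X)$ always holds, this gives equality. We may assume $\List(X)$ is infinite (finite list-chromatic number entails finite coloring number, contrary to hypothesis), so set $\kappa=\List(X)$ and suppose toward a contradiction that $\col(X)>\kappa$. It then suffices to build a $\kappa$-assignment admitting no good coloring, which contradicts $\List(X)=\kappa$. The first step is to push the obstruction down to a regular cardinal. Let $\mu$ be least such that some subgraph of $X$ of cardinality $\mu$ has coloring number $>\kappa$, and fix such a subgraph $Y$; then $\mu>\kappa$ (subgraphs of size $\le\kappa$ have coloring number $\le\kappa$), and by the classical singular compactness theorem for the coloring number (Shelah) $\mu$ is regular, so $\mu$ is regular uncountable. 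By minimality, every subgraph of $Y$ of size $<\mu$ has coloring number $\le\kappa$. I fix a continuous increasing filtration $Y=\bigcup_{\alpha<\mu}Y_\alpha$ with each $\size{Y_\alpha}<\mu$, obtained from a chain of elementary submodels $M_\alpha$ with $Y_\alpha=M_\alpha\cap Y$, and (identifying the vertices of $Y$ with $\mu$ and all colors with ordinals $<\mu$) arrange that $Y_\alpha=Y\cap\alpha$ for a club of $\alpha$.

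Second, I would extract a stationary obstruction. Consider the set $B=\{\alpha<\mu:\text{there is }x\in Y\setminus Y_\alpha\text{ with }\size{\calE^x\cap Y_\alpha}\ge\kappa\}$, and claim it is stationary. Otherwise pick a club $C$ disjoint from $B$ and reindex the filtration along $C$; then every vertex has fewer than $\kappa$ neighbors strictly preceding its stage of entry. Ordering $Y$ primarily by stage of entry and, inside each layer $Y_{\beta+1}\setminus Y_\beta$ (a subgraph of size $<\mu$, hence of coloring number $\le\kappa$ by minimality), by a well-order witnessing its coloring number, produces a well-order of $Y$ in which every vertex has fewer than $\kappa$ earlier neighbors. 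This forces $\col(Y)\le\kappa$, a contradiction. Moreover, by elementarity of the submodels I can choose, for each $\alpha\in B$, a witness $x_\alpha\in Y_{\alpha+1}\setminus Y_\alpha$, so that the witnesses enter the filtration immediately, are pairwise distinct, and satisfy $\size{\calE^{x_\alpha}\cap Y_\alpha}\ge\kappa$.

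Third comes the $\diamondsuit$-construction of the bad $\kappa$-assignment $F$. I assign to the non-witness vertices pairwise disjoint ``default'' lists $D_y\in[\mu]^\kappa$; because these are disjoint, every good coloring is injective on the default vertices, so any $\kappa$-sized set of default vertices is necessarily realized with $\kappa$ distinct colors. Using $\diamondsuit(B)$ (available since $\mu$ is regular uncountable and $B\subseteq\mu$ is stationary), fix a guessing sequence $\seq{A_\alpha:\alpha\in B}$ of subsets of $\mu$ and read each $A_\alpha$, via a fixed pairing, as a candidate partial coloring $g_\alpha$ of $Y_\alpha$. At the witness $x_\alpha$ I set $F(x_\alpha)$ to be the set of $g_\alpha$-colors of $\kappa$ of its default neighbors inside $Y_\alpha$; this has size exactly $\kappa$ precisely because those default neighbors are colored with distinct colors. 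Now suppose $f$ is a good coloring with $f(v)\in F(v)$ for all $v$. Coding $f$ coherently with the filtration, $\diamondsuit(B)$ yields stationarily many $\alpha\in B$ with $g_\alpha=f\restr Y_\alpha$; at any such $\alpha$ the list $F(x_\alpha)$ is exactly the set of $f$-colors of $\kappa$ neighbors of $x_\alpha$, so $f(x_\alpha)$ must equal the color of one of these neighbors, contradicting that $f$ is good.

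The step I expect to be the main obstacle is guaranteeing, at stationarily many correctly guessed $\alpha$, that $x_\alpha$ genuinely has $\kappa$ \emph{default} neighbors inside $Y_\alpha$. The coloring number only supplies $\kappa$ neighbors, not $\kappa$ \emph{distinctly colored} ones, and a witness may be surrounded by other witnesses, whose overridden trap-lists are not disjoint and hence carry no rainbow guarantee. Resolving this requires controlling the interaction among the witnesses: one must thin $B$ to a stationary set on which each $x_\alpha$ has fewer than $\kappa$ witness-neighbors below it (a coloring-number/free-set bookkeeping on the subgraph spanned by the witnesses, again exploiting the minimality of $\mu$), so that $\size{\calE^{x_\alpha}\cap Y_\alpha}\ge\kappa$ still leaves $\kappa$ default neighbors available. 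One must also verify that the pairing used to code $f$ is coherent with the filtration, so that $\diamondsuit(B)$ really does predict $f\restr Y_\alpha$ stationarily often. Everything else reduces to the routine greedy-coloring and elementary-submodel reflection already used above.
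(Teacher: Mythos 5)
Your overall strategy is the same as the paper's up to the decisive step: reduce via singular compactness to a subgraph $Y$ of regular size $\mu$ all of whose smaller subgraphs have coloring number $\le\ka$, extract the stationary set $B$ of stages admitting a vertex with $\ge\ka$ neighbours below that stage (Fact \ref{2.8+}), give non-trap vertices pairwise disjoint default lists, and use $\diamondsuit(B)$ to guess the coloring and set traps at the witnesses. The gap is precisely the step you flagged, and it is not bookkeeping that ``everything else reduces to'': your trap at $x_\alpha$ only has size $\ka$ (and only catches $f$) if $x_\alpha$ has $\ka$ many neighbours in $Y_\alpha$ whose lists are the pairwise disjoint \emph{defaults}, but the $\ge\ka$ neighbours supplied by Fact \ref{2.8+} may consist almost entirely of earlier witnesses. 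Your proposed repair --- thin $B$ so that each witness has fewer than $\ka$ witness-neighbours below it --- is not carried out, and there is no evident way to carry it out: the induced subgraph on the witnesses can itself have coloring number $>\ka$, and the witnesses can be forced on you (in the ladder-system graph on $\om_1$ joining each limit $\alpha$ to a cofinal $c_\alpha\subseteq\alpha$ of order type $\om$, the unique witness for $\alpha$ is $\alpha$ itself, and the ladders can be chosen inside the set of limit ordinals, so that for a candidate thinning $B'$ all but finitely many neighbours of $x_\alpha$ in $Y_\alpha$ may again lie in $B'$). Also, removing $\ka$ many witness-neighbours from a neighbour set of size exactly $\ka$ can leave nothing, so even a bound of ``fewer than $\ka$ witness-neighbours'' in your sense would not suffice without more care.

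The paper closes exactly this hole with an idea absent from your proposal: Proposition \ref{2.7+} builds \emph{two} $\la$-assignments $F$ and $G$ simultaneously, guessing a \emph{pair} of colorings $(d_\alpha,e_\alpha)$ by $\diamondsuit(S)$, and routes each trap vertex $\beta(\alpha)$ into $F$ or into $G$ according to whether $d_\alpha$ or $e_\alpha$ has image of size $\la$ on the chosen neighbour set $x_\alpha$, keeping the default list in the other assignment. The point is that if $f$ is a good coloring respecting $F$, then the $F$-trap at $\beta(\alpha)$ must have failed, which forces $\la$ many $\eta\in x_\alpha$ to carry $F$-traps rather than defaults; for those $\eta$ the assignment $G$ \emph{is} the default, so any $g$ respecting $G$ is injective there, its image on $x_\alpha$ has size $\la$, and the $G$-trap catches $g$. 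Hence $F$ and $G$ cannot both admit good colorings, so one of them witnesses $\List>\la$ --- no control over how witnesses neighbour one another is ever needed. Without this (or a genuine substitute for your unproved thinning claim) your construction does not go through. A further, minor, point: your opening reduction ``finite list-chromatic number entails finite coloring number'' is itself nontrivial; the paper instead disposes of the boundary case $\col(X)=\om$ by quoting Komj\'ath's theorem that $\col(X)=\om$ implies $\List(X)=\om$.
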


A graph $Y$ is called a \emph{subgraph} of the graph $X$ if
$\calV_Y \subseteq \calV_X$ and $\calE_Y \subseteq \calE_X$.
A subgraph $Y$ of $X$ is an \emph{induced subgraph}
if $\calE_Y=[\calV_Y]^2 \cap \calE_X$.
It is clear that if $Y$ is a subgraph of $X$ then
$\col(Y) \le \col(X)$ and $\List(Y) \le \List(X)$.

The coloring number has many useful properties, one of these is the
\emph{singular compactness}.
Shelah \cite{Shelah} showed that if $\size{X}$ is singular and $\col(Y) \le \la$ for every subgraph $Y$ of size $<\size{X}$,
then $\col(X) \le \la$.
Unlike the coloring number, one can prove that the singular compactness does not hold for the list-chromatic number
in general (see Section 2 below).
However, we prove that it is valid under the GCH assumption.
\begin{thm}\label{thm4}
Suppose $\ka$ is a  singular cardinal such that the set
$\{\mu<\ka \mid 2^\mu=\mu^+\}$ contains a club in $\ka$.
For every graph $X$ of size $\ka$ and  infinite cardinal $\la$,
if $\List(Y) \le \la$ for every subgraph $Y$ of size $<\ka$,
then $\List(X) \le \la$.
\end{thm}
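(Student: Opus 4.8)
The plan is to fix an arbitrary $\la$-assignment $F\colon X\to[\mathrm{ON}]^\la$ and produce a good coloring $f$ with $f(x)\in F(x)$; the case $\la\ge\ka$ is trivial since then $\List(X)\le\col(X)\le\size{X}=\ka\le\la$, so I assume $\la<\ka$. Write $\theta=\cf(\ka)<\ka$ and fix, using the hypothesis, a club $C\subseteq\ka$ of cardinals $\mu$ with $2^\mu=\mu^+$. I would then build a continuous $\in$-increasing chain $\seq{M_i:i<\theta}$ of elementary submodels of some $H_\chi$ with $X,\calE,F\in M_0$, with $\size{M_i}=\ka_i$ for an increasing continuous cofinal sequence $\seq{\ka_i:i<\theta}$ chosen inside $C$ with $\la<\ka_0$, with $\ka_i\subseteq M_i$ and $\seq{M_j:j\le i}\in M_{i+1}$. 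Setting $X_i=X\cap M_i$ gives a continuous filtration $X=\bigcup_{i<\theta}X_i$ into induced subgraphs of size $\ka_i<\ka$. The point of drawing $\ka_i$ from $C$ is the counting bound: a good $F$-coloring of $X_i$ is a function $g$ with $g(x)\in F(x)$, so there are at most $\la^{\ka_i}\le\ka_i^{\ka_i}=2^{\ka_i}=\ka_i^+<\ka$ of them, and by $\seq{M_j:j\le i}\in M_{i+1}$ together with elementarity an enumeration of them lies in $M_{i+1}$.

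Let $C_i$ be the set of good $F$-colorings of $X_i$, organized by end-extension into a tree $T=\bigcup_{i<\theta}C_i$ of height $\theta$ whose $i$-th level has size $\le\ka_i^+$. Two features drive the argument. First, every level is nonempty: since $\size{X_i}<\ka$ we have $\List(X_i)\le\la$, so $F\restr X_i$ admits a good coloring. Second, $T$ is closed under limits of chains: if $\seq{g_j:j<i}$ is coherent then $\bigcup_{j<i}g_j$ is again a good coloring of $X_i$, because every edge of $X_i$ already lies in some $X_j$ on which $g_j$ is good. Hence a cofinal branch of $T$ is exactly a good coloring of all of $X$, and by the limit-closure such a branch can be produced by transfinite recursion \emph{provided} that at the successor steps no coloring is a dead end. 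This reduces the theorem to a freeness statement: it suffices to find cofinally (indeed club-)many $i<\theta$ for which $X_{i+1}$ is \emph{free} over $X_i$, meaning every good coloring of $X_i$ extends to a good coloring of $X_{i+1}$; the standard singular-compactness bookkeeping then assembles the free steps into a full branch, using that the free-over relation is transitive and closed under unions of chains.

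To establish freeness I would argue by reflection. Fix $i$ and a putative non-extendible $g\in C_i$. Extending $g$ to $X_{i+1}$ is precisely the problem of list-coloring the induced subgraph $W=X_{i+1}\setminus X_i$ under the modified assignment $x\mapsto F(x)\setminus\{g(y):y\relE x,\ y\in X_i\}$; since $\size{W}<\ka$ gives $\List(W)\le\la$, a failure can occur only at vertices $x\in W$ for which the colors $g$ places on the neighbors $\{y\in\calE^x:y\in X_i\}$ already exhaust all but $<\la$ of $F(x)$. The strategy is to exploit $\seq{M_j:j\le i}\in M_{i+1}$ and the GCH-given enumeration of the $\le\ka_i^+$ colorings of $X_i$ inside $M_{i+1}$ to reflect a simultaneous such obstruction, as $g$ ranges over $C_i$, down into a single subgraph of size $<\ka$, contradicting $\List\le\la$ there and thereby forcing freeness on a club of $i$.

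The hard part will be precisely this reflection. Unlike the coloring number, list-colorability enjoys no compactness whatsoever: non-colorability of a large graph need not be witnessed on any small subgraph, which is exactly why singular compactness for $\List$ fails in the absence of the arithmetic hypothesis, as the counterexample of Section~2 shows. The entire force of $2^{\ka_i}=\ka_i^+$ is to keep the families of colorings and of potential obstructions of size $\le\ka_i^+<\ka$, so that they can be absorbed into $M_{i+1}$ and controlled by a well-founded rank or pressing-down argument; making this simultaneous reflection of a list-coloring obstruction succeed under the bare club-GCH assumption, rather than under full $\diamondsuit$, is the delicate technical point on which the proof turns.
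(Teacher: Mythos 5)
The skeleton you set up --- a continuous filtration $X_i=X\cap M_i$ by elementary submodels indexed along a cofinal sequence through the club, closure of partial good colorings under unions at limits, and a reduction to extendibility at successor steps --- matches the overall shape of the paper's argument, and you have correctly located where $2^{\ka_i}=\ka_i^+$ must enter. But there is a genuine gap at exactly the point you yourself flag as the hard part: the freeness/reflection step is never carried out. You reduce the theorem to showing that (club-often) every good coloring of $X_i$ extends to one of $X_{i+1}$, and then only gesture at ``reflecting a simultaneous obstruction \dots by a well-founded rank or pressing-down argument.'' Nothing in the proposal actually produces that reflection, and it is not a routine step; as written the proof is incomplete.

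What closes the gap in the paper is a concrete combinatorial condition on the filtration rather than any reflection of coloring obstructions: one arranges that $\size{\calE^x\cap X_i}<\la$ for every $i$ and every $x\in X\setminus X_i$. Once this holds, every good coloring of $X_i$ extends (for $x$ in the next block, $F(x)\setminus f``(\calE^x\cap X_i)$ still has size $\la$ and the block has $\List\le\la$), so your ``freeness'' is automatic and no quantification over all colorings of $X_i$ is needed --- this is Lemma \ref{0.4}. The hypothesis $2^{\ka_i}=\ka_i^+$ enters through Fact \ref{0.3}: for any $Y\in[X]^{\ka_i}$, at most $\ka_i$ vertices of $X$ have $\ge\la$ neighbors in $Y$, since otherwise one finds the bipartite obstruction of Fact \ref{0.3} inside a subgraph of size $\ka_i^+<\ka$. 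Converting this into $\size{\calE^x\cap X_i}<\la$ for $x\notin M_i$ is where your single chain of models of size $\ka_i$ is insufficient: a vertex outside $M_i$ can have $\ge\la$ neighbors in $M_i\cap X$ without having $\ge\la$ neighbors in any \emph{element} of $M_i$, so the small ``bad set'' cannot be argued into $M_i$. The paper therefore builds each $M_i$ as the union of a $\la^+$-chain $\seq{M_i^\alpha\mid \alpha<\la^+}$ with each $M_i^\alpha\in M_i$; then $\ge\la$ neighbors in $M_i$ forces $\ge\la$ neighbors in some $M_i^\alpha$, and the set of such vertices is definable from $M_i^\alpha\in M_i$, has size $\le\ka_i\subseteq M_i$, and hence lies inside $M_i$, a contradiction. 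This $\la^+\times\cf(\ka)$ matrix of models (or some equivalent device) is what your successor-step extension problem is missing.
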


In Sections \ref{sec4}--\ref{sec6},
we investigate reflection principles for the coloring number and the list-chromatic number.
\begin{define}
For an infinite cardinal $\la$, let 
$\Refl(\List, \la)$ be the assertion that for every graph $X$ of size $\le \la$,
if $\List(X)>\om$ then $X$ has a subgraph $Y$ of size $\om_1$ with $\List(Y)>\om$.
This is equivalent to uncountable compactness for the list-chromatic number: For a graph $X$ of size $\le \la$,
if $\List(Y) \le \om$ for every subgraph $Y$ of $X$ with size $\le \om_1$,
then $\List(X) \le \om$.
$\Refl(\List)$ is the assertion that $\Refl(\List, \la)$ holds for every cardinal $\la$.
We define $\Refl(\col,\la)$ and
$\Refl(\col)$ by replacing the list-chromatic number with the coloring number.
\end{define}

Such reflection principles and uncountable compactness are studied in various fields,
e.g, Balogh \cite{Balogh}, Fleissner \cite{Fle}, 
Fuchino et al.\ \cite{Fetal}, 
Fuchino-Rinot \cite{FR}, Fuchino-Sakai-Soukup-Usuba \cite{FSSU}, and
Todor\v cevi\'c \cite{Tod, Tod2}.

Fuchino et al.\ \cite{Fetal} introduced the Fodor-type Reflection Principle $\FRP$,
which is a combinatorial principle and is consistent modulo large cardinal axiom.
$\FRP$ implies various reflection principles (\cite{Fetal}, \cite{FR}).
Fuchino-Sakai-Soukup-Usuba \cite{FSSU} proved that
$\FRP$ implies $\Refl(\col)$, actually $\FRP$ is 
equivalent to $\Refl(\col)$.
Other notable principle in this context is \emph{Rado's conjecture}, which is a reflection principle for the chromatic number of intersection graphs, and is consistent modulo large cardinal axiom (\cite{Tod}).
The reflection $\Refl(\col)$ is strictly  weaker than Rado's conjecture
since Rado's conjecture implies $\Refl(\col)$ but the converse does not hold (Fuchino-Sakai-Torres-Perez-Usuba \cite{FSTU}).

For  $\Refl(\List)$, Fuchino-Sakai \cite{FS} showed that
$\Refl(\List)$ holds after collapsing a supercompact cardinal to $\om_2$,
hence $\Refl(\List)$  is also consistent modulo large cardinal axiom.

It is known that $\Refl(\col)$, or even the local reflection $\Refl(\col, \la)$ for some $\la >\om_1$
is a large cardinal property;
If $\la>\om_1$ is regular and $\Refl(\col, \la)$ holds
then every stationary subset of $\la \cap \Cof(\om)$ is reflecting (see Fact \ref{non-ref ladder}).
In contrast with this result, we prove that
the local reflection $\Refl(\List, \la)$ for some fixed $\la$ is not a large cardinal property.
\begin{thm}\label{thm1}
Suppose GCH. Let $\la>\om_1$ be a cardinal,
and suppose $\mathrm{AP}_{\mu}$ holds (see Definition \ref{AP}) for every $\mu<\la$ with countable cofinality.
Then there is a poset $\bbP$ which is $\sigma$-Baire, satisfies $\om_2$-c.c.,
and forces that ``\,$\Refl(\List, \la)$ holds and $2^{\om_1}>\la$''.
\end{thm}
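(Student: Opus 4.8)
The plan is to build a poset $\bbP$ whose generic does two jobs at once: it adds $\la^+$ many Cohen subsets of $\om_1$ (to guarantee $2^{\om_1}=\la^+>\la$), and, guided by a bookkeeping function, it generically adjoins a good coloring for each \emph{reflection instance} — a pair $\seq{X,F}$ with $X$ a graph of size $\le\la$ and $F$ an $\om$-assignment on $X$ — provided that instance already satisfies the reflection hypothesis, i.e.\ every subgraph $Y$ of size $\le\om_1$ admits a good coloring from $F\restriction Y$. Conditions will be countable approximations: a countable partial good coloring (respecting $\calE$ and the lists $F$) on each of countably many instances, together with a countable Cohen condition. The crucial design point is that, because $\bbP$ will add no new $\om$-sequences and be $\om_2$-c.c., GCH lets me capture every graph of size $\le\la$ and every $\om$-assignment in the \emph{extension} by one of only $\la^+$ many nice names (using $\la^{\le\la}=\la^+$); so a bookkeeping of length $\la^+$ meets every reflection instance that appears in $V[G]$, not merely the ground-model ones, and this is what will let genericity deliver $\Refl(\List,\la)$ in the final model.

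First I would dispose of the arithmetic: $2^{\om_1}=\la^+>\la$ follows from the Cohen part by the nice-names count together with GCH bounding the continuum from above, and since $\bbP$ adds no reals, CH persists into $V[G]$. The $\om_2$-chain condition is the first genuine point of care: a naive $\Delta$-system argument stalls because two partial colorings with disjoint domains may still be incompatible across an edge, and under CH there can be $\la^+$ many distinct colorings on a fixed countable root. I expect to recover $\om_2$-c.c.\ by a thinning that exploits the reflection hypothesis itself — instances with an uncountable monochromatically-forced clique are excluded, so the admissible approximations are ``locally sparse'' and their incompatibility types on a countable root number at most $\om_1$ under CH — after which the $\Delta$-system lemma applies.

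The hard part will be the $\sigma$-Baire property, and this is exactly where $\mathrm{AP}_\mu$ enters. The coloring coordinates are \emph{not} $\sigma$-closed: a vertex entering at a late stage can have uncountably many already-coloured neighbours, so the union of a descending $\om$-chain of conditions need not be a condition (some list gets exhausted). To show the intersection of countably many dense open sets is dense without adding a real, I would run a fusion, interleaving the given dense sets with a continuous filtration $\seq{X_\xi}$ of the relevant graph into subgraphs of smaller size, committing each new vertex to a colour only after its finitely many ``dangerous'' lower neighbours are fixed. At a singular $\mu<\la$ of countable cofinality the filtration cannot be closed off in $\om$ steps by bare recursion; here I would invoke $\mathrm{AP}_\mu$ (Definition \ref{AP}) to supply an approachable, internally approximated filtration along which the countable approximations cohere, so the fusion closes to a genuine condition. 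This approachable one-step list-extension lemma is the technical heart of the argument, and it is also what explains the absence of any large-cardinal constraint: unlike $\Refl(\col,\la)$, whose singular levels demand stationary reflection, here the singular levels are handled purely by approachability, which is available from GCH.

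Finally I would read off reflection in $V[G]$. Given an extension graph $X$ of size $\le\la$ with $\List(Y)\le\om$ for every subgraph $Y$ of size $\le\om_1$, and any $\om$-assignment $F$ on $X$, the hypothesis transfers to the instance $\seq{X,F}$, so by the bookkeeping the corresponding coordinate adjoined a generic \emph{total} good coloring of $X$ from $F$; as $F$ was arbitrary this yields $\List(X)\le\om$, hence $\Refl(\List,\la)$. The one loose end to verify is that partial good colorings are dense at each coordinate (so the generic coloring is genuinely total), which reduces to the same approachable one-step extension used in the $\sigma$-Baire proof. Combining this with $2^{\om_1}>\la$ from the Cohen part gives the theorem; I anticipate that the approachable amalgamation lemma of the previous paragraph is the single step on which both the distributivity and the density of colorings ultimately rest, and so is where essentially all the difficulty concentrates.
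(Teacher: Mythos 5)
Your overall architecture (a long bookkeeping iteration of single-instance coloring posets, $\sigma$-Baire and $\om_2$-c.c., with nice names catching all instances in the extension) matches the paper's, but the choice of which instances to force on hides a genuine gap, and the role you assign to $\mathrm{AP}_\mu$ is not one it can play. You propose to adjoin a generic good coloring for every pair $\seq{X,F}$ such that each subgraph of size $\le\om_1$ admits a good coloring from $F$. For such an instance the poset of countable partial good colorings is not known to be $\sigma$-Baire or $\om_2$-c.c., and your two repairs do not work as described. The paper only runs this forcing for \emph{$\ka$-nice} graphs, i.e.\ $\col(X)\le\om_1$, $\col(Y)\le\om$ for every $Y\in[X]^{\om_1}$, and $\size{\calE^\alpha\cap\alpha}\le\om$; every structural lemma of Section \ref{sec5} --- the existence and stationarity of complete domains, the one-step density lemma (which needs $\calE^{\alpha}\cap\dom(p)$ \emph{finite} for new vertices, a consequence of completeness), the fact that unions of compatible conditions are greatest lower bounds (hence the $\Delta$-system argument for $\om_2$-c.c.), and the $\sigma$-Baireness via traces on countable elementary submodels --- rests on these coloring-number hypotheses. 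Your hypothesis is much weaker: a graph with $\col(X)\ge\om_2$ can satisfy it for a given $F$, and for such a graph neither your ``thinning'' (incompatibility is about edges between disjoint parts of the domains, not monochromatic cliques, and completeness of domains is what kills it) nor a ``fusion via $\mathrm{AP}_\mu$'' is available; $\mathrm{AP}_\mu$ is a guessing principle for initial segments of filtrations of $\mu^+$ and gives no amalgamation or distributivity for the coloring poset.

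What the paper actually does with the hypotheses you invoke is different and is the missing content of your sketch. $\mathrm{AP}_\mu$ and the arithmetic $\gamma^\om=\gamma$ (both preserved into the extension) are used only in Proposition \ref{1.8}, to show that any graph with $\col(X)>\om_1$ contains a copy of $K_{\om,\om_1}$ --- the approachability is needed exactly when $\size{X}$ is the successor of a singular of countable cofinality, to find a countable neighborhood set inside a countable elementary submodel. Separately, the iteration forces $\diamondsuit(S)$ for every stationary $S\subseteq\om_1$ (via the trivial-graph coordinates, isomorphic to $\mathrm{Fn}(\ka,\om,<\om_1)$), which by Corollary \ref{3.4.1} converts ``some $Y\in[X]^{\om_1}$ has $\col(Y)>\om$'' into ``$\List(Y)>\om$''. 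The final verification is then a trichotomy: if $\List(X)>\om$ then either $X$ is nice (impossible, the forcing gave $\List(X)\le\om$), or $\col(X)\ge\om_2$ (Proposition \ref{1.8} plus CH yields a size-$\om_1$ subgraph with uncountable list-chromatic number), or some size-$\om_1$ subgraph has uncountable coloring number (diamond finishes). Your proposal omits the diamond component entirely and collapses the trichotomy into a single forcing step whose well-behavedness is precisely the unproved --- and, for non-nice graphs, implausible --- claim.
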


This theorem shows that the local reflection $\Refl(\List, \la)$ does not imply
$\Refl(\col,\la)$. 
On the other hand, 
under the assumption that $\List(X)=\col(X)$ for every graph $X$ of size $\om_1$ with infinite coloring number, we have that $\Refl(\col, \la)$ implies $\Refl(\List,\la)$ for every $\la$,
in particular $\Refl(\col)$ implies $\Refl(\List)$.
This observation suggests a natural question:
\begin{question}
Does the global reflection 
$\Refl(\col)$ imply $\Refl(\List)$?
How is the converse?
\end{question}
For this question, we show that 
$\Refl(\col)$ and $\Refl(\List)$ can be separated one from the other.
More precisely, we show that the global reflection does not imply 
the reflection at $\om_2$ for the other number.

\begin{thm}\label{thm2}
If $\ZFC+$``there exists a supercompact cardinal'' is consistent,
then the following theories are consistent as well:
\begin{enumerate}
\item $\ZFC+\Refl(\List)$ holds but $\Refl(\col, \om_2)$ fails.
\item $\ZFC+\Refl(\col)$ holds but $\Refl(\List, \om_2)$ fails.
\end{enumerate}
\end{thm}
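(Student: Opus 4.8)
The plan is to realize each separation over a model obtained by L\'evy-collapsing a supercompact cardinal $\ka$ to become $\om_2$, keeping in each case the one reflection principle that such a collapse is known to force — $\Refl(\List)$ by Fuchino--Sakai for part (1), and $\FRP$, equivalently $\Refl(\col)$, for part (2) — and then installing a combinatorial witness that destroys the other principle at $\om_2$ while preserving the first. A guiding constraint is that the two witnessing gadgets must be of genuinely different natures: since the hypothesis of Theorem~\ref{thm5} makes $\col$ and $\List$ coincide on all graphs of infinite coloring number, it renders $\Refl(\List,\la)$ and $\Refl(\col,\la)$ equivalent for every $\la$, so any model separating the two principles must fail the relevant instances of $\diamondsuit$. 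In particular, by the observation preceding the Question, the model for (2) must contain a graph of size $\om_1$ with infinite coloring number but countable list-chromatic number.

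For (1) I would force $\Refl(\List)$ from the supercompact together with a countably closed top factor $\bbS$ adding, by bounded approximations, a non-reflecting stationary set $S\subseteq\om_2\cap\Cof(\om)$. Since $\bbS$ is countably closed it adds no reals and preserves $\om_1$, and a strategic-closure argument preserves all cardinals; in the extension $S$ is stationary but non-reflecting, so by the ladder-system graph built in the proof of Fact~\ref{non-ref ladder} there is a graph $X$ of size $\om_2$ with $\col(X)>\om$ every one of whose subgraphs of size $\om_1$ has $\col=\om$, whence $\Refl(\col,\om_2)$ fails. Crucially, I would \emph{not} compute $\List(X)$ directly: once $\Refl(\List)$ is shown to survive, $\List(X)=\om$ follows automatically, for if $\List(X)>\om$ then $\Refl(\List)$ would produce a subgraph of size $\om_1$ with uncountable list-, hence coloring-, number, contradicting non-reflection. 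The preservation of $\Refl(\List)$ I would obtain by arranging the whole construction as a single reverse-Easton iteration over which a supercompactness embedding $j$ lifts, so that $\Refl(\List)$ holds by the same master-condition argument as in the pure collapse, the countably closed factor $\bbS$ being absorbed into the lift.

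For (2) I would instead keep $\Refl(\col)=\FRP$, working in a model of $\FRP$ produced from the supercompact, and install a witness to $\neg\Refl(\List,\om_2)$: a graph $X$ of size $\om_2$ with $\List(X)>\om$ all of whose subgraphs of size $\om_1$ have $\List=\om$. Because $\FRP$ entails stationary reflection at $\om_2$, the non-reflecting-stationary-set gadget of part (1) is unavailable, so the witness must be reflection-compatible. The starting block is a single graph $Y_0$ of size $\om_1$ with $\om=\List(Y_0)<\col(Y_0)=\om_1$, which exists precisely because the relevant $\diamondsuit$ fails (cf.\ Theorem~\ref{thm5}); I would amalgamate copies of $Y_0$ along a coherent system of size $\om_2$ so that a single $\om$-assignment defeats every good coloring globally while every restriction of size $\om_1$ remains countably list-colorable. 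This is to be carried out either by a forcing $\bbQ$ that preserves $\FRP$, or by arranging the $\FRP$-producing collapse itself to leave the list-incompactness at $\om_2$ intact.

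The main obstacles are precisely the two preservation/compatibility demands. In (1) it is the verification that $\Refl(\List)$ survives the countably closed factor $\bbS$; the delicacy is that $\bbS$ can add new graphs on old vertex sets even though it adds no new countable objects, so the lifting of $j$ must be shown to yield reflection for these new graphs as well. In (2) it is the simultaneous requirement that the construction create genuine list-incompactness at $\om_2$ yet preserve $\FRP$, a principle sensitive to exactly the kind of amalgamation employed; the heart of the argument is to show that the amalgamated $X$ admits no good coloring from its bad $\om$-assignment while every subgraph of size $\om_1$, seeing only finitely many blocks coherently, still does.
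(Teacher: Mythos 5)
There is a genuine gap in both halves. For part (1), the proposed model cannot work as described: after a L\'evy-style collapse of the supercompact to $\om_2$ over a GCH ground model (with the countably closed factor $\bbS$ absorbed), you have $2^{\om_1}=\om_2$, so by Fact \ref{sdiamond} $\diamondsuit(S)$ holds for \emph{every} stationary $S\subseteq\om_2\cap\Cof(\om)$; Proposition \ref{2.7+} then shows that the ladder-system graph $X$ of Fact \ref{non-ref ladder} --- which has $\col(X)>\om$ while every subgraph of size $\om_1$ has countable coloring number --- satisfies $\List(X)>\om$ as well, while all of its size-$\om_1$ subgraphs have countable list-chromatic number. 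So in your model $\Refl(\List,\om_2)$ provably \emph{fails}; your step ``once $\Refl(\List)$ is shown to survive, $\List(X)=\om$ follows automatically'' begs exactly the question at issue, and the ``same master-condition argument as in the pure collapse'' cannot supply it, because that argument routes through $\FRP+\diamondsuit$ and $\FRP(\om_2)$ must fail in any model with a non-reflecting stationary subset of $\om_2\cap\Cof(\om)$. You half-notice this obstruction in your opening paragraph but do not act on it. The paper's actual construction spends all its effort here: after $\coll(\om_1,<\ka)*\bbS_\ka$ it runs an $\om_3$-length countable support iteration of the specializing posets $\bbP(X,F)$ of Section \ref{sec5}, which (pushing $2^{\om_1}$ above $\om_2$ and hence killing the relevant diamonds) forces that every graph of size $\om_2$ with $\col\le\om_1$ and all $\om_1$-subgraphs of countable coloring number has countable list-chromatic number; $\Refl(\List)$ at $\om_2$ then comes from this together with Proposition \ref{1.8}, and at larger cardinals from a lifting argument giving $\FRP(\la)$ for all regular $\la>\om_2$ combined with $\diamondsuit$ on $\om_1$ and the GCH singular compactness (Proposition \ref{1.3}).

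For part (2), your amalgamation of copies of $Y_0$ is never actually constructed, and it is not needed. The paper's argument is one line: start from the $\FRP$ model $V^{\coll(\om_1,<\ka)}$ and add $\om_2$ Cohen reals. The forcing is c.c.c., so $\FRP$ (hence $\Refl(\col)$) is preserved by Fact \ref{basic FRP}; and $2^\om=\om_2$ in the extension, so $K_{\om,2^\om}$ is a graph of size $\om_2$ with uncountable list-chromatic number all of whose subgraphs of size $\om_1<2^\om$ have countable list-chromatic number (Facts \ref{2.2.4} and \ref{2.2.5}, i.e.\ Lemma \ref{CH}). Your structural observation that the witness must be a graph whose $\om_1$-subgraphs have $\List=\om<\col$ is correct --- $K_{\om,\om_1}$ is exactly such a graph once $2^\om>\om_1$ --- but without identifying the bipartite witness and the c.c.c.\ preservation of $\FRP$, the proposal does not constitute a proof.
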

Notice that, in \cite{FS}, they already showed the consistency of the theory
that $\ZFC+$``$\Refl(\col)$ holds but $\Refl(\List, \om_3)$ fails''.

This paper is organized as follows.
In Section \ref{pre}, we present some basic definitions, facts, and easy observations.
We study combinatorial properties about the list-chromatic  number and the coloring number in Section \ref{sec3},
 and we prove Theorems \ref{thm5} and \ref{thm4}.
In  Section \ref{sec4} we discuss  basic results about reflection principles
and we prove Theorem \ref{thm2} (2).
In Section \ref{sec5}, we construct a forcing notion, and in Section \ref{sec6},
we prove Theorems \ref{thm1} and \ref{thm2} (1) using the forcing notion.

\section{Preliminaries}\label{pre}
We present some definitions, facts, and  easy observations,
which will be used later.

$\ka$, $\la$, $\mu$ will denote infinite cardinals unless otherwise specified.
$\mathrm{ON}$ is the class of all ordinals.
For an ordinal $\alpha$, let $\Cof(\alpha)$ be the class of ordinals with cofinality $\alpha$.

For a regular uncountable cardinal $\theta$,
let $\calH_\theta$ be the set of all sets with hereditary cardinality $<\theta$.

Let $\ka$ be a regular uncountable cardinal.
A stationary set $S \subseteq \ka$ is \emph{reflecting}
if there is some $\alpha<\ka$ such that
$S \cap \alpha$ is stationary in $\alpha$.
If $S$ is not reflecting, then $S$ is \emph{non-reflecting}.

For a regular uncountable cardinal $\ka$ and a stationary set $S \subseteq \ka$,
a sequence $\seq{d_\alpha \mid \alpha \in S}$ is a \emph{$\diamondsuit(S)$-sequence}
if for every $A \subseteq \ka$,
the set $\{ \alpha \in S \mid d_\alpha=A \cap \alpha\}$ is stationary in $\ka$.
Let us say that $\diamondsuit(S)$ holds if there is a $\diamondsuit(S)$-sequence.

\begin{fact}[Shelah \cite{S diamond}]\label{sdiamond}
Let $\ka$ be an infinite cardinal.
Suppose $2^\ka=\ka^+$.
Then for every stationary subset $S \subseteq \ka^+ \setminus \Cof(\cf(\ka))$,
$\diamondsuit(S)$ holds.
\end{fact}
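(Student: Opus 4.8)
The plan is to prove the result as the classical, ``easy'' half of Shelah's diamond theorem, the cofinality restriction being exactly what separates it from the hard unrestricted case. First I record the counting consequences of the hypothesis: since $\size{\calH_{\ka^+}}=2^\ka=\ka^+$, there are precisely $\ka^+$ bounded subsets of $\ka^+$, and I fix an enumeration of them together with a well-ordering $<^*$ of $\calH_{\ka^{++}}$. Next I note that every $\delta\in S$ has $\cf(\delta)<\ka$: its cofinality is a regular cardinal $\le\size{\delta}\le\ka$ distinct from $\cf(\ka)$, and the value $\ka$ can occur only when $\ka$ is regular, in which case $\cf(\ka)=\ka$ and it is excluded. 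Since a $\diamondsuit$-sequence for a stationary set that splits into stationary pieces can be assembled from $\diamondsuit$-sequences on the pieces, I would reduce to the case that $S$ concentrates on a single cofinality, i.e.\ $\cf(\delta)=\rho$ for all $\delta\in S$, where $\rho<\ka$ is regular and $\rho\ne\cf(\ka)$.

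For the construction, write $\theta=\cf(\ka)$ and fix a cofinal sequence $\seq{\ka_i\mid i<\theta}$ of cardinals below $\ka$. The engine is a continuous increasing elementary chain $\seq{M_\alpha\mid\alpha<\ka^+}$ of submodels of $\seq{\calH_{\ka^{++}},\in,<^*}$ with $\size{M_\alpha}=\ka$, $\ka+1\subseteq M_0$, the fixed enumeration and $S$ in $M_0$, $\seq{M_\beta\mid\beta\le\alpha}\in M_{\alpha+1}$, and $M_\alpha\cap\ka^+\in\ka^+$; I additionally equip each $M_\alpha$ with an inner filtration $M_\alpha=\bigcup_{i<\theta}M_\alpha^i$ into elementary submodels of size $\ka_i$, chosen coherently so that $\seq{M_\alpha^i\mid i<\theta}\in M_{\alpha+1}$. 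Then $C^*=\{\delta<\ka^+\mid M_\delta\cap\ka^+=\delta\}$ is a club, and for $\delta\in C^*$ the trace $X\cap\delta=X\cap M_\delta$ of any $X\in M_\delta$ is the union, along a cofinal sequence of order type $\rho<\ka$ in $\delta$, of bounded sets coded by the fixed enumeration, so it is determined by a single function of length $\rho$ into $\ka^+$. Since there are only $(\ka^+)^{\rho}=\ka^+$ such codes, I would define $d_\delta\subseteq\delta$ for $\delta\in C^*$ to be the canonical approximation that $M_\delta$ computes from this data, and set $d_\delta=\emptyset$ otherwise.

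For the verification, fix a target $X\subseteq\ka^+$ and a club $E\subseteq\ka^+$ and place $X,E\in M_0$; then the set of $\delta\in S\cap C^*\cap\lim(E)$ is stationary. At such a $\delta$ one has $X\in M_\delta$ while $X\cap\delta\notin M_\delta$, and the whole content is to show that for stationarily many of these $\delta$ the guess is correct, $d_\delta=X\cap\delta$. This is precisely the step that requires $\rho\ne\theta$: the outer chain reaches $\delta$ in $\cf(\delta)=\rho$ steps, whereas the canonical size-$\ka$ filtration proceeds in $\theta$ steps, and this mismatch is what prevents $\delta$ from being ``approachable'' through the filtration and thereby forces the cofinal approximation to $X\cap\delta$ to be captured, on a stationary set, by data that $M_\delta$ genuinely contains. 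I expect this catching step to be the main obstacle, and it is no accident: it is exactly the boundary of the classical method, since removing the restriction $S\cap\Cof(\cf(\ka))=\emptyset$ and guessing on cofinality $\cf(\ka)$ is Shelah's substantially deeper theorem. Finally, as a consistency check, for $\ka=\om$ the hypothesis is vacuous because every limit below $\om_1$ has cofinality $\om=\cf(\om)$, in agreement with the fact that $\mathrm{CH}$ alone does not imply $\diamondsuit(\om_1)$.
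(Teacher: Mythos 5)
There is a genuine gap, and it sits exactly where the theorem lives. Your reductions are fine: every $\delta\in S$ has $\cf(\delta)<\ka$ and $\cf(\delta)\ne\cf(\ka)$, one may assume $S$ concentrates on a single regular $\rho$, and under $2^\ka=\ka^+$ there are only $\ka^+$ bounded subsets of $\ka^+$ and $(\ka^+)^\rho=\ka^+$ codes globally. But the proof then stops at the decisive point: you never say what $d_\delta$ is, and you explicitly defer the verification that the guess is correct on a stationary set (``I expect this catching step to be the main obstacle''). That catching step is not a routine obstacle --- it is the whole theorem. The relevant count is not the global number of codes but the number of candidate traces at a \emph{fixed} $\delta$, namely the number of $\rho$-sequences of ordinals below $\delta$, which is $\size{\delta}^\rho=\ka^\rho$. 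Under the sole hypothesis $2^\ka=\ka^+$ this can equal $\ka^+$ (e.g.\ $\ka=\om_1$, $\rho=\om$, $2^{\aleph_0}=\aleph_2$), so the candidates cannot all be listed below $\delta$ and the classical Fodor/diagonalization scheme you are gesturing at does not close.

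This reflects a misidentification of which theorem is being proved. The ``classical, easy half'' you describe is Gregory's theorem and its relatives, which require extra cardinal arithmetic such as $2^{<\ka}=\ka$ or $\ka^\rho=\ka$ precisely so that the local counting above works. The Fact as stated --- only $2^\ka=\ka^+$, for arbitrary uncountable $\ka$ --- is itself the deep result of Shelah's 2010 ``Diamonds'' paper (which the present paper cites as a black box and does not prove); Shelah's argument is genuinely different, roughly a pigeonhole over a partition of $S$ into $\ka^+$ stationary pieces against a fixed enumeration of $\mathcal{P}(\ka)$ in order type $\ka^+$, rather than an elementary-chain capture. Also, the unrestricted version you contrast it with (guessing on $\Cof(\cf(\ka))$) is not merely ``substantially deeper'': it is independent, as $\diamondsuit(S)$ can consistently fail for stationary $S\subseteq\ka^+\cap\Cof(\cf(\ka))$ even with $2^\ka=\ka^+$. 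So the architecture you set up cannot be completed as written; a correct proof must replace the catching step by Shelah's argument, not merely fill it in.
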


\begin{define}
For an infinite set $A$ and a limit ordinal $\delta$, a \emph{filtration of $A$} is a $\subseteq$-increasing 
continuous sequence $\seq{A_\alpha\mid \alpha<\delta}$ such that
$\size{A_\alpha}<\size{A}$ for $\alpha<\delta$ and $\bigcup_{\alpha<\delta} A_\alpha=A$.
\end{define}


The following characterization of the coloring number is very useful.

\begin{fact}[Erd\H os-Hajnal \cite{EH}]\label{2.2+}
Let $X$ be a graph and $\ka$ an infinite cardinal.
Then the following are equivalent:
\begin{enumerate}
\item $\col(X) \le \ka$. 
\item There is a function $f:X \to [X]^{<\ka}$ such that
for every $x, y \in X$, if $x \relE y$ then either $x \in f(y)$ or $y \in f(x)$.
\item  There is a filtration $\seq{X_i\mid i<\delta}$ of
$X$ such that for every $i<\delta$, identifying $X_i$ with a induced subgraph of $X$,
$\col(X_i) \le \ka$, and,  for every $i<\delta$ and $x \in X \setminus X_i$,
we have $\size{X_i \cap \calE^x}<\ka$.
\item There is a 1-1 enumeration $\seq{x_i\mid i<\size{X}}$ of $X$
such that for every $i<\size{X}$,
we have $\size{ \{x_j\mid j<i, x_j \mathrel\calE x_i\}}<\ka$.
\end{enumerate}
\end{fact}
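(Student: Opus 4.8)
The plan is to establish the cycle $(4)\Rightarrow(1)\Rightarrow(2)\Rightarrow(3)\Rightarrow(4)$, running the nontrivial steps by induction on $\la=\size{X}$. The implication $(4)\Rightarrow(1)$ is immediate: a $1$-$1$ enumeration $\seq{x_i\mid i<\la}$ is itself a well-ordering $\triangleleft$ of $X$, and the displayed count in $(4)$ is exactly the defining requirement of $\col$. For $(1)\Rightarrow(2)$, given a witnessing $\triangleleft$ I would set $f(x)=\{y\in\calE^x\mid y\mathrel\triangleleft x\}$; then $\size{f(x)}<\ka$ by the choice of $\triangleleft$, and if $x\relE y$ the $\triangleleft$-smaller of the two lies in the $f$-value of the larger, so $x\in f(y)$ or $y\in f(x)$. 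As a base case for the induction, when $\la\le\ka$ every enumeration of $X$ in order type $\la$ has all proper initial segments of cardinality $<\la\le\ka$, so all of $(1)$--$(4)$ hold outright; I may therefore assume $\la>\ka$ and that the full equivalence holds for all graphs of size $<\la$.

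For $(2)\Rightarrow(3)$ the key idea is to filter $X$ by sets that are \emph{closed under $f$}. I would take a continuous increasing chain of elementary submodels $M_\alpha\prec\calH_\theta$ with $X,f\in M_0$, $\ka\subseteq M_\alpha$, $\size{M_\alpha}<\la$, and $X\subseteq\bigcup_\alpha M_\alpha$, and put $X_\alpha=M_\alpha\cap X$. Since $\ka\subseteq M_\alpha$ and $f(x)\in M_\alpha$ has size $<\ka$ for $x\in X_\alpha$, a bijection in $M_\alpha$ from an ordinal below $\ka$ onto $f(x)$ witnesses $f(x)\subseteq M_\alpha$, so each $X_\alpha$ is $f$-closed. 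Two things then require checking. First, for $x\in X\setminus X_\alpha$ and $y\in X_\alpha$ with $y\relE x$, $f$-closedness gives $x\notin f(y)$, so $(2)$ forces $y\in f(x)$; hence $X_\alpha\cap\calE^x\subseteq f(x)$ and $\size{X_\alpha\cap\calE^x}<\ka$, which is the second clause of $(3)$. Second, $f\restr X_\alpha$ maps into $[X_\alpha]^{<\ka}$ and still witnesses $(2)$ for the induced subgraph $X_\alpha$, so the induction hypothesis gives $\col(X_\alpha)\le\ka$, the first clause of $(3)$.

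The implication $(3)\Rightarrow(4)$ I would carry out by concatenating enumerations block by block. Writing $B_\alpha=X_{\alpha+1}\setminus X_\alpha$, each $B_\alpha$ is an induced subgraph of $X_{\alpha+1}$, so by monotonicity $\col(B_\alpha)\le\col(X_{\alpha+1})\le\ka$, and the induction hypothesis (via $(1)\Rightarrow(4)$ for $B_\alpha$) yields a $1$-$1$ enumeration of $B_\alpha$ in order type $\size{B_\alpha}<\la$ in which every vertex has fewer than $\ka$ earlier neighbours inside $B_\alpha$. Laying the blocks end to end in the order of $\alpha$ gives a well-ordering of $X$ in which a vertex $x\in B_\alpha$ sees only its earlier neighbours within $B_\alpha$ together with $X_\alpha\cap\calE^x$, each set of size $<\ka$, hence $<\ka$ earlier neighbours in all. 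The one delicate point is that the resulting order type be exactly $\la$: for regular $\la$ this is automatic since each initial segment stays bounded below $\la$, while for singular $\la$ I would arrange the filtration so that $\size{B_\alpha}$ increases cofinally in $\la$ along $\delta=\cf(\la)$, making the ordinal sum $\sum_{\alpha<\delta}\size{B_\alpha}$ equal to $\la$.

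I expect the main obstacle to be $(2)\Rightarrow(3)$, namely isolating the correct closure condition (closedness under $f$) and extracting from it the crucial inclusion $X_\alpha\cap\calE^x\subseteq f(x)$ for $x\notin X_\alpha$; this is exactly where the asymmetric hypothesis ``$x\in f(y)$ or $y\in f(x)$'' of $(2)$ is converted into an honest bound on backward neighbours. Once that inclusion is in hand, the induction closes and the remaining work---the concatenation in $(3)\Rightarrow(4)$ and the order-type adjustment for singular $\la$---is routine bookkeeping.
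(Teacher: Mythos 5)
The paper does not prove this statement at all --- it is quoted as a known result of Erd\H os--Hajnal \cite{EH} --- so there is no in-paper argument to compare against; judged on its own, your proof is correct and complete. The cycle $(4)\Rightarrow(1)\Rightarrow(2)\Rightarrow(3)\Rightarrow(4)$ with induction on $\size{X}$ is sound: the base case $\size{X}\le\ka$ is handled properly, the elementary-submodel argument for $(2)\Rightarrow(3)$ correctly extracts $f$-closedness from $\ka\subseteq M_\alpha$, and the inclusion $X_\alpha\cap\calE^x\subseteq f(x)$ for $x\notin X_\alpha$ is exactly the right pivot. Two small remarks. First, in $(3)\Rightarrow(4)$ you should also enumerate the initial block $X_0$ (not only the successor differences $B_\alpha$), but this is covered by the same induction hypothesis since $\col(X_0)\le\ka$ is part of $(3)$. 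Second, your worry about the order type in the singular case is unnecessary: every proper initial segment of the concatenated order is contained in some $X_{\alpha+1}$, hence has cardinality $<\la$, and a well-order of cardinality $\la$ all of whose proper initial segments have cardinality $<\la$ has order type exactly $\la$ --- no regularity of $\la$ and no special arrangement of the $\size{B_\alpha}$ is needed. Your proposed fix is harmless, just redundant.
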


The following fact is immediate from the result in Komj\'ath \cite{Komjath}, and we 
present the proof for completeness.

\begin{fact}\label{0.3}
Let $X=\seq{X, \calE}$ be a graph and 
$\la$ and $\mu$ infinite cardinals with $\mu \le \la$.
If there are $Y_0, Y_1 \subseteq X$ such that
$\size{Y_0} = \la$, $\size{Y_1} \ge 2^\la$,
and $\size{\calE^z \cap Y_0} \ge \mu$ for every $z \in Y_1$, then
$\List(X)>\mu$.
\end{fact}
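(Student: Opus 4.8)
The plan is to prove $\List(X)>\mu$ by exhibiting a single $\mu$-assignment $F:X \to [\mathrm{ON}]^\mu$ that admits no good coloring $f$ with $f(x)\in F(x)$ for all $x$; this is exactly what $\List(X)\not\le\mu$ requires. The construction follows the classical bipartite pattern (from Komj\'ath's work): I will first pin down the colors that any admissible coloring must use on $Y_0$, and then exploit the abundance of vertices in $Y_1$ to block every such coloring.

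First I would make the lists on $Y_0$ pairwise disjoint. Enumerating $Y_0=\{y_\alpha \mid \alpha<\la\}$ and using $\la\cdot\mu=\la$, I assign to $y_\alpha$ a set $F(y_\alpha)=L_\alpha\in[\mathrm{ON}]^\mu$ with the $L_\alpha$ pairwise disjoint. The purpose of disjointness is that every coloring $c$ of $Y_0$ with $c(y_\alpha)\in L_\alpha$ is automatically injective, so for each $z\in Y_1$ the realized color set $c[\calE^z\cap Y_0]$ has size exactly $\size{\calE^z\cap Y_0}\ge\mu$. This is the conceptual heart of the argument and the one point that really needs the disjointness: it is what guarantees that a size-$\mu$ blocking list sitting \emph{inside} the realized colors is available for every $z$.

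Next comes the counting and the bookkeeping. The number of admissible colorings $c$ of $Y_0$ is $\prod_{\alpha<\la}\size{L_\alpha}=\mu^\la$, and since $2\le\mu\le\la$ we have $\mu^\la=2^\la$. Because $\la<2^\la\le\size{Y_1}$, the set $Y_1\setminus Y_0$ still has size $\ge 2^\la$, so I can fix an injection $c\mapsto z_c$ from the set of colorings of $Y_0$ into $Y_1\setminus Y_0$; landing off $Y_0$ avoids any clash of lists. For each coloring $c$ I then set $F(z_c)$ to be a subset of $c[\calE^{z_c}\cap Y_0]$ of size exactly $\mu$, which is legitimate by the previous paragraph and well defined since $c\mapsto z_c$ is injective. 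Finally I give every remaining vertex of $X$ an arbitrary list in $[\mathrm{ON}]^\mu$, so that $F$ is a genuine $\mu$-assignment on all of $X$.

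To conclude, suppose toward a contradiction that $f$ is a good coloring with $f(x)\in F(x)$ for all $x$. Then $c:=f\restriction Y_0$ is one of the admissible colorings, so $z_c$ is defined and $f(z_c)\in F(z_c)\subseteq c[\calE^{z_c}\cap Y_0]$. Hence $f(z_c)=c(y)=f(y)$ for some $y\in\calE^{z_c}\cap Y_0$; but $y\relE z_c$, contradicting that $f$ is good. Therefore no good coloring of $F$ exists, i.e.\ $\List(X)>\mu$. The only steps needing care are the cardinal arithmetic $\mu^\la=2^\la$ and the observation that the blocking vertices $z_c$ can be taken outside $Y_0$, both of which are immediate from $\mu\le\la$ and $\la<2^\la$.
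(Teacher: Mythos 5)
Your proof is correct and follows essentially the same route as the paper: pairwise disjoint lists on $Y_0$ force any admissible coloring of $Y_0$ to be injective, and the $2^\la$ many such colorings are matched to distinct vertices of $Y_1$ whose lists are chosen inside the realized neighbor-colors, blocking every candidate good coloring. The only cosmetic difference is that you inject the set of colorings into $Y_1$ while the paper enumerates $\prod_{y\in Y_0}A_y$ by elements of $Y_1$ (with repetitions), which is the same device.
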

\begin{proof}
We may assume that $Y_0 \cap Y_1=\emptyset$.
It suffices to show that some subgraph of $X$ has  list-chromatic number $>\mu$.
By removing edges of $X$, we  may assume that 
$\size{\calE^z \cap Y_0} = \mu$ for every $z \in Y_1$.
Fix a pairwise disjoint family $\{A_y\mid y \in Y_0\}$ with $A_y \in [\la]^\mu$.
Since $\size{Y_1} \ge 2^\la$,
we can take an enumeration 
$\seq{g_z\mid z \in Y_1}$ of 
$\prod_{y \in Y_0} A_y$, possibly with repetitions.
Let $Y$ be the induced subgraph $Y_0 \cup Y_1$.
Define the $\mu$-assignment $F$ of $Y$ as follows.
For $z \in Y_1$, let $F(z)=\{g_z(y)\mid  y \relE z \land y \in Y_0\} \in [\la]^\mu$.
For $y \in Y_0$, let $F(y)=A_y$.
We show that there is no good coloring $f$ of $Y$ with $f(y) \in F(y)$.
For a coloring $f$ of $Y$ with $f(x) \in F(x)$,
there must be  $z \in Y_1$ with $f \restriction Y_0=g_z$.
Then $\{f(y)\mid y \relE z \land y \in Y_0 \}=\{g_z(y)\mid y \relE z \land y \in Y_0\}=F(z)$ and $f(z) \in F(z)$,
so $f$ is never good.
\end{proof}

For non-empty sets $X$ and $Y$,
let $K_{X,Y}$ be the complete bipartite graph on the bipartition classes $X$ and $Y$.
Fact \ref{0.3} yields the following fact. 
\begin{fact}[Lemma 6 in \cite{Komjath}]\label{2.2.4}
The complete bipartite graph $K_{\om, 2^{\om}}$ has
uncountable list-chromatic number.
\end{fact}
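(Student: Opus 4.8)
The plan is to read this off directly from Fact~\ref{0.3}, which is already set up precisely to detect large list-chromatic number from a ``small side / large side'' edge pattern. Concretely, I would instantiate Fact~\ref{0.3} with $\la=\mu=\om$. Recall that $K_{\om,2^\om}$ has vertex set the disjoint union of the two bipartition classes, one of size $\om$ and one of size $2^\om$, with $\{x,y\}\in\calE$ exactly when $x$ and $y$ lie in different classes. So I would take $Y_0$ to be the class of size $\om$ and $Y_1$ to be the class of size $2^\om$; these are disjoint subsets of the vertex set, and $\size{Y_0}=\la=\om$ while $\size{Y_1}=2^\om\ge 2^\la$, which handles the two cardinality hypotheses.

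For the neighborhood hypothesis I would use that $K_{\om,2^\om}$ is the \emph{complete} bipartite graph: every $z\in Y_1$ is joined by an edge to every vertex of the opposite class $Y_0$, so $\calE^z\cap Y_0=Y_0$ and hence $\size{\calE^z\cap Y_0}=\om\ge\mu$ for each $z\in Y_1$. With all three conditions of Fact~\ref{0.3} in place, that fact yields $\List(K_{\om,2^\om})>\mu=\om$, which is exactly the claim that the list-chromatic number is uncountable.

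I do not expect any genuine obstacle here, since the entire combinatorial content has been isolated into Fact~\ref{0.3}; the present statement is simply the special case in which completeness of the bipartite graph makes the neighborhood requirement automatic. The only point requiring a moment's care is the matching of parameters: one must choose $\mu$ equal to $\om$ rather than anything larger, because the countable side $Y_0$ only guarantees that each vertex of $Y_1$ has $\om$-many (not more) neighbors inside $Y_0$, and correspondingly $\la=\om$ so that $2^\la=2^\om$ is met by $\size{Y_1}$.
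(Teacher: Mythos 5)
Your proof is correct and is exactly the paper's route: the paper derives Fact~\ref{2.2.4} by the same direct application of Fact~\ref{0.3} with $Y_0$, $Y_1$ the two bipartition classes and $\la=\mu=\om$. The parameter matching you describe is the whole content, and you have it right.
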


\begin{fact}[Lemma 9 in \cite{Komjath}]\label{2.2.5}
If $\la<2^\om$, then
the complete bipartite graph $K_{\om,\la}$ has
countable list-chromatic number.
In particular, every subgraph of $K_{\om, 2^\om}$ with size $<2^\om$ has
countable list-chromatic number.
\end{fact}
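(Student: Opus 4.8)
The plan is to verify the list\nobreakdash-colouring condition directly. Write $A$ and $B$ for the two bipartition classes of $K_{\om,\la}$, so that $\size{A}=\om$, $\size{B}=\la$, and every $a\in A$ is joined to every $b\in B$. Fix an arbitrary $\om$\nobreakdash-assignment $F$. Since the graph is complete bipartite, a coloring $f$ with $f(x)\in F(x)$ is good exactly when no color is used on both sides; hence it suffices to choose $g(a)\in F(a)$ for $a\in A$ so that $F(b)\not\subseteq\range(g)$ for every $b\in B$, because then each $b$ can be colored from the nonempty set $F(b)\setminus\range(g)$. As all the values $g(a)$ lie in the countable set $S=\bigcup_{a\in A}F(a)$, we have $\range(g)\subseteq S$, so the only $b$ that can cause trouble are those with $F(b)\subseteq S$.

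The key step is to replace the search for $g$ by the search for a suitable set of admissible colors: it is enough to produce $N\subseteq S$ meeting every $F(a)$ $(a\in A)$ and containing no $F(b)$ $(b\in B)$, since then setting $g(a)$ to be any element of $F(a)\cap N$ gives $\range(g)\subseteq N$, whence $F(b)\not\subseteq\range(g)$ for all $b$. To find such an $N$ I will manufacture a large almost disjoint family of candidates. Enumerate $A=\seq{a_n\mid n<\om}$ and, using that each $F(a_n)$ is infinite, choose pairwise distinct points $r^k_n\in F(a_n)$ for $n,k<\om$. Next fix a family $\seq{h_x\mid x\in 2^\om}$ of functions $h_x\colon\om\to\om$ that is pairwise eventually different (for instance let $h_x(n)$ be a code of $x\restriction n$), and put $N_x=\{r^{h_x(n)}_n\mid n<\om\}$. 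Each $N_x$ meets every $F(a_n)$ because $r^{h_x(n)}_n\in F(a_n)$; and for $x\neq y$ the intersection $N_x\cap N_y$ is finite, since $r^{h_x(n)}_n=r^{h_y(m)}_m$ forces $n=m$ and $h_x(n)=h_y(n)$, which happens for only finitely many $n$.

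Finally I count. Each $F(b)$ is infinite, so if $F(b)\subseteq N_x$ and $F(b)\subseteq N_y$ with $x\neq y$, then the infinite set $F(b)$ would lie inside the finite set $N_x\cap N_y$, which is impossible; thus each $b$ satisfies $F(b)\subseteq N_x$ for at most one $x\in 2^\om$. Consequently the set $\{x\in 2^\om\mid \exists b\in B,\ F(b)\subseteq N_x\}$ is a union of $\size{B}=\la$ sets each of size at most one, so it has size $\le\la<2^\om$; choosing $x^\ast$ outside it, the set $N=N_{x^\ast}$ has the required properties and we are done. The ``in particular'' clause then follows at once, since any subgraph of $K_{\om,2^\om}$ of size $<2^\om$ is a subgraph of some $K_{\om,\la}$ with $\la<2^\om$, and the list\nobreakdash-chromatic number is monotone under subgraphs. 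I expect the main obstacle to be the construction in the second paragraph, namely producing $2^\om$ pairwise almost disjoint ``transversals'' $N_x$ that simultaneously hit every list $F(a_n)$; this is exactly where the infinitude of the lists $F(a)$ is used, whereas the hypothesis $\la<2^\om$ enters only through the concluding count.
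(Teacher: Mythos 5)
Your proof is correct. Note that the paper itself gives no proof of this statement; it is quoted as Lemma~9 of Komj\'ath's paper, and your argument --- building $2^{\om}$ transversals $N_x$ of the lists on the countable side with pairwise finite intersections, observing that each $b$ can have $F(b)\subseteq N_x$ for at most one $x$, and then using $\la<2^\om$ to pick an $N_{x^\ast}$ that traps no $F(b)$ --- is essentially the standard (Komj\'ath) argument for this fact, with all the small points (the diagonal choice of pairwise distinct $r^k_n$, the eventually different family $h_x$, and the reduction of the ``in particular'' clause to monotonicity of $\List$ under subgraphs) handled correctly.
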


The following is also due to Komj\'ath.

\begin{fact}[\cite{Komjath}]\label{2.8+}
Let $X=\seq{\ka, \calE}$ be a graph on the regular uncountable cardinal $\ka$
and $\la<\ka$ an infinite cardinal.
Suppose $\col(Y) \le \la$ for every subgraph $Y$ of $X$ with size $<\ka$.
Then the following are equivalent:
\begin{enumerate}
\item The set $S=\{\alpha \in \ka \cap \Cof(\cf(\la)) \mid \exists \beta \ge \alpha\,(
\size{\alpha \cap \calE^\beta} \ge \la)\}$ is stationary in $\ka$.
\item The set $T=\{\alpha \in \ka \mid \exists \beta \ge \alpha\,(
\size{\alpha \cap \calE^\beta} \ge \la)\}$ is stationary in $\ka$.
\item $\col(X)>\la$.
\end{enumerate}
\end{fact}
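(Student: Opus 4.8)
I will establish the cycle $(1)\Rightarrow(2)$, $(2)\Leftrightarrow(3)$, and $(2)\Rightarrow(1)$, which yields the full equivalence. Throughout, the underlying vertex set is $\ka$ itself, so the natural filtration to keep in mind is the initial-segment filtration $\seq{\alpha\mid\alpha<\ka}$.

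\textbf{The easy implications.} The inclusion $S\subseteq T$ gives $(1)\Rightarrow(2)$ at once. For $(2)\Leftrightarrow(3)$ I would use the filtration characterization, Fact \ref{2.2+}(3), together with the hypothesis that every subgraph of size $<\ka$ has coloring number $\le\la$. For $(3)\Rightarrow(2)$ (contrapositively): if $T$ is non-stationary, fix a club $C$ disjoint from $T$ and use the filtration whose $i$-th level is the $i$-th element of $C$. Each level is an initial segment of $\ka$ of size $<\ka$, hence has $\col\le\la$ by hypothesis, and the degree condition ``$\size{c\cap\calE^\beta}<\la$ for all $\beta\ge c$'' holds for every $c\in C$ precisely because $c\notin T$; thus $\col(X)\le\la$. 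For $(2)\Rightarrow(3)$ (contrapositively): if $\col(X)\le\la$, take a witnessing filtration from Fact \ref{2.2+}(3); on a club of $\alpha$ its levels coincide with the initial segment $\alpha$, and the degree clause of the filtration then says exactly $\alpha\notin T$, so $T$ avoids a club and is non-stationary.

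\textbf{The crux $(2)\Rightarrow(1)$.} Here lies the real content, and it is where the cofinality restriction in $S$ is produced. The key observation is that for $\alpha\in T$ with witness $\beta\ge\alpha$, if $\delta_0$ denotes the supremum of the first $\la$ elements (in increasing order) of $\alpha\cap\calE^\beta$, then $\delta_0\le\alpha$, $\cf(\delta_0)=\cf(\la)$, and $\size{\delta_0\cap\calE^\beta}\ge\la$ with $\beta\ge\delta_0$; hence $\delta_0\in S$. So $\alpha\mapsto\delta_0(\alpha)$ maps $T$ into $S$ below the identity. Now suppose, for contradiction, that $S$ is non-stationary, fix a club $E$ disjoint from $S$, and recall $T$ is stationary (by $(2)\Leftrightarrow(3)$, or rather we are in case $(2)$). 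Then $T\cap E$ is stationary; for $\alpha\in T\cap E$ the value $\delta_0(\alpha)\in S$ cannot equal $\alpha$ (as $\alpha\in E$ and $E\cap S=\emptyset$), so $\alpha\mapsto\delta_0(\alpha)$ is genuinely regressive on $T\cap E$. Fodor's lemma yields a single $\delta^*\in S$ and a stationary $S_0\subseteq T\cap E$ with $\delta_0(\alpha)=\delta^*$ for all $\alpha\in S_0$. Since the witnesses $\beta$ for $\alpha\in S_0$ satisfy $\beta\ge\alpha$ and $\alpha$ ranges cofinally, I obtain a set $W$ of size $\ka$ of vertices, each having $\ge\la$ neighbours inside the \emph{fixed} set $D=\delta^*$ of size $<\ka$.

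\textbf{Finishing and the main obstacle.} To reach a contradiction with the hypothesis I would use a coloring-number counting argument: if $f$ witnessed $\col(H)\le\la$ (Fact \ref{2.2+}(2)) on the induced subgraph $H$ on $D\cup W_0$, then each $\beta\in W_0$, having $\ge\la$ neighbours in $D$ but $\size{f(\beta)}<\la$, must satisfy $\beta\in f(d)$ for some $d\in D$; as $\sum_{d\in D}\size{f(d)}\le\size{D}\cdot\la=\size{D}$, this forces $\size{W_0}\le\size{D}$, so choosing $\size{W_0}=\size{D}^+$ gives $\col(H)>\la$ on a subgraph of size $\size{D}^+$, contradicting the hypothesis. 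The main obstacle is exactly the bookkeeping in this last step: the bound produces a subgraph of size $\size{\delta^*}^+$, which is $<\ka$ except in the boundary case where $\ka$ is a successor and $\size{\delta^*}$ is its predecessor. In that case one must instead locate a \emph{smaller} trace $D_0\subseteq\delta^*$ meeting $\la$-many neighbours of $\size{D_0}^+$-many vertices of $W$, and this extraction is where the genuine cardinal-arithmetic care is needed; I expect this to be the only delicate point, the rest being the clean filtration/Fodor machinery above.
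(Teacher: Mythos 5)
Your $(1)\Rightarrow(2)$ and $(2)\Leftrightarrow(3)$ are correct and essentially the paper's (the paper runs $(2)\Rightarrow(3)$ through the function characterization, Fact \ref{2.2+}(2), rather than through a club of levels of a filtration, but both work). Your Fodor setup for $(2)\Rightarrow(1)$ is also sound and is in substance the paper's device: the map $\alpha\mapsto\delta_0(\alpha)$, its regressivity off $S$, and the pressing-down producing a single $\delta^*$ together with a cofinal set $W$ of vertices $\beta$ with $\size{\calE^\beta\cap\delta^*}\ge\la$.

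The gap is in your endgame, and the ``boundary case'' $\ka=\size{\delta^*}^+$ is not mere bookkeeping. The configuration ``$\ka$-many vertices each with $\ge\la$ neighbours in a fixed set of size $\mu$, where $\ka=\mu^+$'' does \emph{not} by itself yield a subgraph of size $<\ka$ with coloring number $>\la$: the complete bipartite graph $K_{\mu,\mu^+}$ with $\la=\mu$ has exactly this configuration, has $\col>\la$, and yet every subgraph of size $\le\mu$ has $\col\le\mu=\la$ (interleave the two classes). So your plan of contradicting the hypothesis via a small subgraph cannot succeed from the configuration alone, and the proposed extraction of a smaller trace $D_0$ is also obstructed: when $\la$ is regular, $\calE^{\beta}\cap\delta^*$ is by construction a set of order type exactly $\la$ cofinal in $\delta^*$, so every bounded $D_0\subsetneq\delta^*$ meets it in fewer than $\la$ points. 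The correct finish requires no counting at all and is what the paper does: once you have $\delta^*<\ka$ and the cofinal set $W$, every $\alpha>\delta^*$ with $\cf(\alpha)=\cf(\la)$ already belongs to $S$, since choosing $\beta\in W$ with $\beta\ge\alpha$ gives $\calE^\beta\cap\alpha\supseteq\calE^\beta\cap\delta^*$ of size $\ge\la$. Hence $S\supseteq(\delta^*,\ka)\cap\Cof(\cf(\la))$ is stationary, contradicting your assumption that $S$ avoids the club $E$. (The paper phrases this by applying Fodor only on $T\setminus\Cof(\cf(\la))$, where $\delta_0(\alpha)<\alpha$ automatically because $\cf(\alpha)\neq\cf(\la)$, and then concluding $S$ is stationary exactly as above.) With that one-line replacement your argument closes; as written, it does not.
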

\begin{proof}
(1) $\Rightarrow$ (2) is trivial.

(2) $\Rightarrow$ (3).
Suppose $T$ is stationary but $\col(X) \le \la$.
Then  by Fact \ref{2.2+}, 
there is $f:X \to [X]^{<\la}$ such that
whenever $\alpha, \beta \in X$ with $\alpha \relE \beta$, we have $\alpha \in f(\beta)$ or $\beta \in f(\alpha)$.
Since $T$ is stationary,
we can find $\alpha\in T$ such that
$f(\gamma) \subseteq \alpha$ for every $\gamma <\alpha$.
Fix $\beta \ge \alpha$ with
$\size{\calE^\beta \cap \alpha} \ge \la$.
Because $\size{f(\beta)}<\la$,
there is $\gamma \in \calE^\beta \cap \alpha$ with 
$\gamma \notin f(\beta)$.
$\beta$ is jointed with $\gamma$ but $\gamma \notin f(\beta)$,
so $\beta \in f(\gamma) \subseteq \alpha$.
This is a contradiction.

(3) $\Rightarrow$ (2).
Suppose $T$ is non-stationary, and 
then we can deduce $\col(X) \le \la$ as follows.
Fix a club $D$ in $\ka$ disjointing from
the set $\{\alpha \in \ka \mid 
 \exists \beta \ge \alpha\,(
\size{\alpha \cap \calE^\beta} \ge \la)\}$.
The sequence $\seq{\alpha \mid \alpha \in D}$ is a filtration of $X$.
Moreover $\col(\alpha) \le \la$ 
and $\size{\alpha \cap \calE^\beta}<\la$ for every $\alpha \in D$ and $\beta \in \ka \setminus \alpha$.
Applying Fact \ref{2.2+}, we have $\col(X) \le \la$.

(2) $\Rightarrow$ (1).
Suppose $S$ is non-stationary.
Let $T'=\{\alpha \in \ka \setminus \Cof(\cf(\la))\mid 
 \exists \beta \ge \alpha\,(
\size{\alpha \cap \calE^\beta} \ge \la)\}$.
We will show that $T'$ is non-stationary, then we can conclude that $T$ is non-stationary.
Suppose to the contrary that 
$T'$ is stationary.
For $\alpha \in T'$, fix $\beta \ge \alpha$ with $\size{\calE^\beta \cap \alpha} \ge \la$.
Since $\cf(\alpha) \ne \cf(\la)$,
there is $g(\alpha)<\alpha$
with
$\size{\calE^\beta \cap g(\alpha)} \ge \la$.
By Fodor's lemma,
there is $\gamma<\ka$ such that
the set $\{\alpha\in T'\mid g(\alpha)=\gamma\}$
is stationary.
Now take an arbitrary $\alpha$ with $\cf(\alpha)=\cf(\la)$ and $\alpha>\gamma$.
Then we can take $\alpha' \in  T'$ with $\alpha'>\alpha$ and $g(\alpha')=\gamma$.
Then $\size{\calE^\beta \cap \gamma} \ge \la$ for some $\beta \ge \alpha'>\alpha$.
This means that $\alpha \in S$, 
so $S$ is stationary. This is a contradiction.
\end{proof}

The next fact is a consequence of Shelah's singular compactness theorem \cite{Shelah}.

\begin{fact}[Shelah \cite{Shelah}]\label{2.10++}
Let $X$ be a graph, and suppose that $\size{X}$ is a singular cardinal.
For an infinite cardinal $\la$,
if $\col(Y) \le \la$ for every subgraph $Y$ of $X$ with size $<\size{X}$,
then $\col(X) \le \la$.
\end{fact}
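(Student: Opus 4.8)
Since this is stated as a consequence of Shelah's singular compactness theorem, I would present a derivation adapted to the coloring number. Write $\ka=\size{X}$ and $\rho=\cf(\ka)$. If $\la\ge\ka$ there is nothing to prove, since $\col(X)\le\size{X}=\ka\le\la$, so assume $\la<\ka$. The plan is to work with a relative notion of freeness. For $A\subseteq B\subseteq X$, say that $B$ is \emph{free over $A$} if there is a well-ordering $\triangleleft$ of $B\setminus A$ such that, placing all of $A$ before $B\setminus A$, every $x\in B\setminus A$ has fewer than $\la$ neighbours among $A$ together with the $\triangleleft$-earlier elements of $B\setminus A$; thus ``$B$ is free over $\emptyset$'' means exactly $\col(B)\le\la$ by Fact \ref{2.2+}(4). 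I would first record two formal facts: relative freeness is transitive (concatenate the two witnessing orderings), and it obeys a \emph{chain-union lemma} — if $\seq{X_i\mid i<\delta}$ is $\subseteq$-increasing and continuous with $X_0$ free and each $X_{i+1}$ free over $X_i$, then $\bigcup_{i<\delta}X_i$ is free (concatenate the orderings in order of $i$; for $x\in X_{i+1}\setminus X_i$ its earlier neighbours are exactly $\calE^x$ intersected with $X_i$ and the earlier elements of $X_{i+1}\setminus X_i$, the set controlled by freeness over $X_i$). Consequently it suffices to build a continuous increasing filtration $\seq{X_i\mid i<\rho}$ of $X$ with each $\size{X_i}<\ka$, $X_0$ free, and each $X_{i+1}$ free over $X_i$; then $\bigcup_i X_i=X$ is free, i.e.\ $\col(X)\le\la$.

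Constructing this filtration is the entire difficulty, and it is where the singularity of $\ka$ must be used. The obstacle is the \emph{absorption condition}: $X_{i+1}$ being free over $X_i$ forces $\size{\calE^x\cap X_i}<\la$ for every $x\in X_{i+1}\setminus X_i$, since all old neighbours of $x$ must be charged to its initial $<\la$-sized set. This can fail for a carelessly chosen filtration, and it cannot in general be repaired by passing to elementary submodels alone. It is essential to see why the hypothesis rules out the fatal cases. For the complete bipartite graph $K_{\la,\ka}$, any piece that has swallowed the $\la$-side leaves every later vertex of the $\ka$-side with $\la$ old neighbours; but this graph already \emph{violates} the hypothesis, because its subgraph $K_{\la,\la^+}$ (of size $<\ka$, as $\ka$ is singular and hence $\ka>\la^+$) has $\col(K_{\la,\la^+})>\la$: in any good ordering each of the $\la$ small-side vertices has $<\la$ earlier neighbours on the big side, their union has size $\le\la$, so some big-side vertex lies $\triangleleft$-above the whole small side and has $\la$ earlier neighbours. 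This is precisely the leverage to exploit: the assumption that \emph{every} subgraph of size $<\ka$ is free.

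The construction I would carry out fixes an increasing sequence of regular cardinals $\seq{\ka_i\mid i<\rho}$ cofinal in $\ka$ with $\ka_0>\la,\rho$, together with a continuous increasing internally approachable chain $\seq{M_i\mid i<\rho}$ of elementary submodels of some $\calH_\theta$ with $X,\calE,\la\in M_0$, $\size{M_i}=\ka_i$, $\seq{M_j\mid j\le i}\in M_{i+1}$, and $X\subseteq\bigcup_i M_i$, and then set $X_i=X\cap M_i$. Each $X_i$ has size $\ka_i<\ka$, so $\col(X_i)\le\la$ by hypothesis, and $X_i\in M_{i+1}$. The point is to verify that $X_{i+1}$ is free over $X_i$, and here one argues by reflection in the spirit of Fact \ref{2.8+}: inside $M_{i+1}$, which sees $X_i$ together with a filtration of it of length $\ka_i$, the vertices that would violate the absorption condition, if they were unbounded, would assemble — via a Fodor/pressing-down argument at the \emph{regular} cardinal $\ka_i$ — into a subgraph of size $<\ka$ with coloring number $>\la$, contradicting the hypothesis. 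Thus the ``good'' size-$\ka_i$ approximations are club, and the recursion proceeds while covering a prescribed enumeration of $X$ so that $\bigcup_i X_i=X$.

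I expect the main obstacle to be exactly this last step: organizing the two-dimensional bookkeeping (a $\rho$-chain of size-$\ka_i$ approximations, each itself built as a $\ka_i$-chain of free pieces with free successors) so that relative freeness is preserved at every successor stage of the outer chain. This is the technical core of Shelah's singular compactness theorem, and the cleanest rigorous route is to verify that the relative-freeness notion above satisfies the axioms of Shelah's abstract theorem — heredity, the chain-union lemma, and the $\ka$-freeness/reflection axiom (the content of the reflection argument just sketched) — and then to invoke that theorem directly to conclude $\col(X)\le\la$.
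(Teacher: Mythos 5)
The paper offers no proof of this Fact: it is quoted from Shelah's paper, so there is no in\nobreakdash-text argument to compare yours against line by line. Your outline is sound as far as it goes --- the relative\nobreakdash-freeness notion, its transitivity, the chain-union lemma, and the computation that $\col(K_{\la,\la^+})>\la$ are all correct --- and closing the argument by verifying the axioms of Shelah's abstract singular compactness theorem is a legitimate way to finish, which is in effect what the paper itself does by citing \cite{Shelah}.

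That said, for the coloring number the abstract machinery is overkill, and your sketch locates the difficulty in the wrong place. The successor step is automatic: if $\col(X_{i+1})\le\la$ (free from the hypothesis, as $\size{X_{i+1}}<\ka$) and every $x\in X_{i+1}\setminus X_i$ has $\size{\calE^x\cap X_i}<\la$, then restricting a good well-ordering of $X_{i+1}$ to $X_{i+1}\setminus X_i$ and placing it after $X_i$ already witnesses freeness over $X_i$, since the old neighbours and the earlier new neighbours of $x$ are two sets each of size $<\la$. So nothing beyond the absorption condition needs to be arranged (compare Fact \ref{2.2+}(3)), and the absorption condition is supplied by a pigeonhole ``degree claim'' that is just your $K_{\la,\la^+}$ argument run in general: if $Y\subseteq X$ has size $\mu$ with $\la\le\mu$, then at most $\mu$ vertices $x$ satisfy $\size{\calE^x\cap Y}\ge\la$, for otherwise $\mu^+$ of them together with $Y$ form a subgraph of size $\mu^+<\ka$ (here singularity gives $\mu^+<\ka$) whose coloring number exceeds $\la$ by the same later-neighbour pigeonhole. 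The genuine crux --- which your proposal does not isolate, and which a single internally approachable chain $\seq{M_i}$ does not handle --- is absorption at \emph{limit} stages of the outer filtration: a vertex may have $<\la$ neighbours in each $X_j$, $j<i$, yet $\ge\la$ neighbours in $X_i=\bigcup_{j<i}X_j$ when $\cf(i)\ge\cf(\la)$. This is exactly what the $\cf(\ka)\times\la^+$ matrix of models $\seq{M_i^\alpha}$ in the paper's proof of Proposition \ref{1.3} is built to repair: $X_i=\bigcup_{\alpha<\la^+}(X\cap M_i^\alpha)$ is a union of length $\la^+$, so $\ge\la$ neighbours in $X_i$ forces $\ge\la$ neighbours in some $M_i^\alpha\in M_i$, and the degree claim then places $x$ inside $M_i$. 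Replacing the GCH-dependent claim there by the degree claim above, and Lemma \ref{0.4} by Fact \ref{2.2+}, that proof becomes a complete elementary proof of the present Fact; your appeal to a ``Fodor/pressing-down argument at $\ka_i$'' is not the mechanism actually needed at that step.
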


The singular compactness for the list-chromatic number does not hold in general;
Komj\'ath proved that 
$\List(K_{\om,2^\om})>\om$ but
if $H$ is a subgraph of $K_{\om,2^\om}$ and $\size{H}<2^\om$,
then $\List(H) \le \om$.
Hence if $2^\om$ is singular
then $K_{\om, 2^\om}$ exemplifies the failure of Shelah's singular compactness 
with respect  to the list-chromatic number.
On the other hand, in the next section,
we will show that
the singular compactness for the list-chromatic number holds
under GCH.

The singular compactness immediately yields the following, which we will use frequently.
\begin{cor}\label{2.10+++}
Let $\la$ be an infinite cardinal, and $X$ a graph with $\col(X)>\la$.
Then $X$ has a subgraph $Y$ such that 
$\size{Y}$ is regular uncountable, $\col(Y)>\la$, and $\col(Z) \le \la$ for every subgraph $Z$ of $Y$ with
size $<\size{Y}$.
\end{cor}
\begin{proof}
Let $\ka=\min\{\size{Y} \mid Y$ is a subgraph of $X$, $\col(Y)>\la\}$.
Clearly $\ka$ is uncountable.
Pick a subgraph $Y$ of $X$ with size $\ka$ and
$\col(Y) >\la$. By the minimality of $\ka$,
every subgraph of $Y$ with size $<\ka$ has coloring number $\le \la$.
Hence $\ka$ must be regular by Fact \ref{2.10++}.
\end{proof}

We will use the forcing method,
so we fix some basic notations and definitions.
For a poset $\bbP$ and $p, q \in \bbP$,
if $p \le q$ then $p$ is an \emph{extension} of $q$.
$p$ and $q$ are \emph{compatible} if there is $r \in \bbP$ which is a 
common extension of $p$ and $q$.

For a cardinal $\ka$, a poset $\bbP$ is \emph{$\ka$-Baire}
if for every family $\calF$ of open dense subsets of $\bbP$ with
$\size{\calF}<\ka$,
the intersection $\bigcap \calF$ is dense in $\bbP$.
A poset $\bbP$ is $\ka$-Baire if and only if
the forcing with $\bbP$ does not add new $<\ka$-sequences.
\emph{$\sigma$-Baire} means $\om_1$-Baire.

For an uncountable cardinal $\ka$, and non-empty sets $X$ and $Y$,
let $\mathrm{Fn}(X,Y, <\ka)$ be the poset of all partial functions from $X$ to $Y$ with size $<\ka$.
The ordering is the reverse inclusion.


For a regular cardinal $\la$ and a set $X$ of ordinals,
let $\coll(\la,X)$ be the poset of all functions $p$ with size $<\la$
such that $\dom(p) \subset \la \times X$
and $p(\alpha,\beta) \in \beta$ for every $\seq{\alpha,\beta} \in \dom(p)$.
The ordering is given by the reverse inclusion.
$\coll(\la,X)$ is $\la$-closed, and forcing with
$\coll(\la, X)$ adds a surjection from $\la$ onto $\beta$ for every $\beta \in X$.
If $X$ is a regular cardinal $\ka$,
$\coll(\la, \ka)$ is denoted as $\coll(\la, <\ka)$,
and if $\ka$ is inaccessible, then $\coll(\la, <\ka)$ satisfies the $\ka$-c.c.
and forces $\ka=\la^+$.



A poset $\bbP$ is \emph{$\om_1$-stationary preserving} if
for every stationary set $S \subseteq \om_1$,
$\bbP$ forces that ``$S$ remains stationary in $\om_1$''.
\begin{lemma}\label{1.11}
Let $X$ be a graph.
\begin{enumerate}
\item If $\size{X}=\om_1$ and $\col(X)>\om$,
then every $\om_1$-stationary preserving poset
 forces that $\col(X) > \om$.
\item Suppose $\col(X) \ge \om_2$.
Then every $\om_2$-c.c. forcing notion forces
that $\col(X) \ge \om_2$.
\end{enumerate}
\end{lemma}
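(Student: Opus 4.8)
The plan is to reduce both statements to the stationary-set characterization of the coloring number in Fact~\ref{2.8+}: for a graph on a regular uncountable $\ka$ all of whose subgraphs of size $<\ka$ have coloring number $\le\la$, one has $\col>\la$ exactly when a certain explicitly definable subset of $\ka$ is stationary. The coloring number itself is not absolute, but the only genuinely set-theoretic ingredient in this characterization is stationarity, so ``preserving $\col$'' splits into two tasks: checking that the witnessing set is computed the same way in $V$ and in the extension, and checking that the forcing keeps it stationary. Crucially, the implication (2)$\Rightarrow$(3) of Fact~\ref{2.8+} --- that stationarity of the cofinality-free set $T$ already forces $\col>\la$ --- is proved with no side hypothesis, so once $T$ is stationary in the extension I may conclude $\col>\la$ there without re-verifying any standing assumption downstairs.

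For (1), identify $\calV_X$ with $\om_1$ and apply Fact~\ref{2.8+} with $\ka=\om_1$, $\la=\om$; the running hypothesis is automatic since every countable graph has countable coloring number. Here the clause-(1) witness is $S=\{\alpha\in\om_1\cap\Cof(\om)\mid \exists\beta\ge\alpha\,(\size{\calE^\beta\cap\alpha}\ge\om)\}$, the set of countable limit ordinals $\alpha$ possessing some $\beta\ge\alpha$ with infinitely many neighbors below $\alpha$. Every limit ordinal $<\om_1$ has cofinality $\om$ and ``$\calE^\beta\cap\alpha$ is infinite'' is absolute, so $S$ is the same set in $V$ and in any $\om_1$-preserving extension; and an $\om_1$-stationary preserving poset does preserve $\om_1$, since collapsing $\om_1$ would make every stationary subset nonstationary. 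As $\col(X)>\om$ makes $S$ stationary in $V$, the forcing keeps it stationary, and since $S\subseteq T$ the unconditional implication (2)$\Rightarrow$(3) yields $\col(X)>\om$ in the extension.

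For (2), first apply Corollary~\ref{2.10+++} with threshold $\la=\om_1$ to obtain a subgraph $Y$ of $X$ with $\size{Y}=\ka$ regular uncountable, $\col(Y)>\om_1$, and $\col(Z)\le\om_1$ for every subgraph $Z$ of $Y$ of size $<\ka$; since $\col(Y)\le\ka$ this gives $\ka\ge\om_2$, and it suffices to preserve $\col(Y)>\om_1$ because $\col(Y)\le\col(X)$ and the subgraph relation is absolute. Applying Fact~\ref{2.8+} to $Y=\seq{\ka,\calE}$ with $\la=\om_1<\ka$, the cofinality-free witness $T=\{\alpha\in\ka\mid\exists\beta\ge\alpha\,(\size{\calE^\beta\cap\alpha}\ge\om_1)\}$ is stationary in $\ka$ in $V$. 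An $\om_2$-c.c.\ poset preserves the regularity of $\ka\ge\om_2$ and keeps every stationary subset of $\ka$ stationary --- bound, using the chain condition, the function computing the next element of a name for a club, extract a ground-model club, and contradict stationarity of $T$. Granting that $T$ and the inequality $\la=\om_1<\ka$ are unchanged, the extension satisfies the hypotheses of the implication (2)$\Rightarrow$(3) of Fact~\ref{2.8+}, giving $\col(Y)>\om_1$, i.e.\ $\col(Y)\ge\om_2$, whence $\col(X)\ge\om_2$.

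The technical heart, and the step I expect to cause trouble, is reconciling these two demands at the very cardinal being reflected. The chain-condition preservation of stationary subsets of regular $\ka\ge\om_2$ is standard, but the absoluteness of $T$ and of ``$\la=\om_1<\ka$'' rests on the cardinality threshold $\om_1$ being computed identically on both sides, i.e.\ on $\om_1$ (equivalently $\om_2$) remaining a cardinal; should $\om_1$ be collapsed, ``$\size{\calE^\beta\cap\alpha}\ge\om_1$'' degrades to a weaker, merely infinitary, condition and the preserved conclusion drops accordingly. I would therefore make explicit, as a preliminary, that the posets in play preserve $\om_1$ (in the applications they are $\sigma$-Baire, hence add no new $\om$-sequences), so that the threshold is genuinely absolute and Fact~\ref{2.8+} may be legitimately reapplied in the extension.
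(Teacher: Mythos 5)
Your proof is correct and follows essentially the same route as the paper: both parts reduce to the stationary-set characterization of Fact~\ref{2.8+} (with Corollary~\ref{2.10+++} supplying the regular-sized subgraph $Y$ in part (2)), and then invoke $\om_1$-stationary preservation, respectively the standard fact that $\om_2$-c.c.\ posets preserve stationary subsets of regular $\ka\ge\om_2$; your observation that the implication (2)$\Rightarrow$(3) of Fact~\ref{2.8+} needs no standing hypothesis is exactly what the paper uses implicitly when it says the stationarity of $S$ in $V^{\bbP}$ ``witnesses'' $\col(Y)\ge\om_2$. Your closing remark about $\om_1$-preservation in part (2) is a fair point about the formulation (an $\om_2$-c.c.\ poset may collapse $\om_1$, in which case the conclusion must be read with the ground-model $\om_2$, or one notes that the posets actually used are $\sigma$-Baire), but it does not reflect a different method.
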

\begin{proof}
(1) is immediate from Fact \ref{2.8+} and the $\om_1$-stationary preservingness of $\bbP$.

For (2), take a subgraph $Y$ of $X$ such that
$\size{Y}$ is regular uncountable,
$\col(Y) >\om_1$, and $\col(Z) \le \om_1$ for every subgraph $Z$ of $Y$ with size $<\size{Y}$.
Let $\ka=\size{Y}$. Clearly $\ka \ge \om_2$.
We may assume $Y$ is of the form $\seq{\ka, \calE}$.
By Fact \ref{2.8+}, the set $S=\{\alpha<\ka \mid \exists \beta \ge \alpha\,(
\size{\calE^\beta \cap \alpha}  \ge \om_1)\}$ is stationary in $\ka$.
Since $\bbP$ satisfies the $\om_2$-c.c.,
we know that $S$ is stationary in $V^{\bbP}$.
Then the stationarity of $S$ witnesses that $\col(Y) \ge \om_2$,
hence so does $\col(X) \ge \om_2$ in $V^{\bbP}$.
\end{proof}

We will also use the Fodor-type reflection principle $\FRP$, which was introduced in
Fuchino et al. \cite{Fetal}.

\begin{define}[\cite{Fetal}]
For a regular $\ka \ge \om_2$,
$\FRP(\ka)$ is the assertion that
for every stationary $E \subseteq \ka \cap \Cof(\om)$ and
$g:E \to [\ka]^\om$ with $g(\alpha) \in [\alpha]^\om$,
there is $I \in [\ka]^{\om_1}$ such that
$\sup(I) \notin I$, $\cf(\sup(I))=\om_1$, and 
the set $\{x \in [I]^\om\mid \sup(x) \in E, g(\sup(x)) \subseteq x\}$ is stationary in $[I]^\om$.
$\FRP$ is the assertion that $\FRP(\ka)$ holds for every regular $\ka \ge \om_2$.
\end{define}

\begin{fact}[\cite{Fetal}, \cite{FSSU}]\label{basic FRP}
\begin{enumerate}
\item $\Refl(\col)$ holds if and only if
$\FRP$ holds.
\item $\FRP$ is preserved by any c.c.c. forcing.
\item If $\ka$ is supercompact, then $\coll(\om_1, <\ka)$ forces $\FRP$,
hence also $\Refl(\col)$.
\end{enumerate}
\end{fact}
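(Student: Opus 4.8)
The three clauses are of quite different character, so the plan is to treat them separately, and I expect clause (1), the equivalence $\Refl(\col)\Leftrightarrow\FRP$, to carry the real weight. For the useful direction $\FRP\Rightarrow\Refl(\col)$ I would start from a graph $X$ with $\col(X)>\om$ and, by Corollary \ref{2.10+++}, replace it by $Y=\seq{\ka,\calE}$ with $\ka$ regular uncountable, $\col(Y)>\om$, and every subgraph of size $<\ka$ of countable coloring number. If $\ka=\om_1$ we are done, so assume $\ka\ge\om_2$. By Fact \ref{2.8+} the set $S=\{\alpha\in\ka\cap\Cof(\om)\mid\exists\beta\ge\alpha\,(\size{\alpha\cap\calE^\beta}\ge\om)\}$ is stationary; for each $\alpha\in S$ I fix a witness $\beta_\alpha\ge\alpha$ and a countable $g(\alpha)\in[\alpha]^\om$ with $g(\alpha)\subseteq\alpha\cap\calE^{\beta_\alpha}$. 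Feeding $E=S$ and $g$ into $\FRP(\ka)$ produces $I\in[\ka]^{\om_1}$ with $\cf(\sup I)=\om_1$ such that $T=\{x\in[I]^\om\mid\sup x\in S,\ g(\sup x)\subseteq x\}$ is stationary in $[I]^\om$. I would then fix a continuous increasing chain $\seq{x_\xi\mid\xi<\om_1}$ with union $I$, so that $C=\{\xi\mid x_\xi\in T\}$ is stationary in $\om_1$, and take the induced subgraph $W$ on $I\cup\{\beta_{\sup x_\xi}\mid\xi\in C\}$, which has size $\om_1$. For $\xi\in C$ the vertex $\beta_{\sup x_\xi}$ is joined to the infinite set $g(\sup x_\xi)\subseteq x_\xi$, so in the natural filtration of $W$ the ``bad'' stages contain $C$ modulo a club; applying Fact \ref{2.8+} (equivalently, Fact \ref{2.2+}) to $W$ then gives $\col(W)>\om$, contradicting that size-$<\ka$ subgraphs of $Y$ have countable coloring number.

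For the reverse implication $\Refl(\col)\Rightarrow\FRP$ I would argue contrapositively: starting from a failure of $\FRP(\ka)$, witnessed by a stationary $E\subseteq\ka\cap\Cof(\om)$ and $g\colon E\to[\ka]^\om$ with $g(\alpha)\in[\alpha]^\om$ for which no $I$ works, I build a graph on $\ka$ by adjoining, above each $\alpha\in E$, a vertex $\beta_\alpha$ joined exactly to the countable set $g(\alpha)$, and then verify through Fact \ref{2.8+} that this graph has uncountable coloring number while every size-$\om_1$ induced subgraph reflects to a non-stationary configuration and hence, by Fact \ref{2.2+}, has a good filtration and countable coloring number. This reverse construction is where I expect the \emph{main obstacle}: translating ``no good $I$ exists for $\FRP$'' into ``every small induced subgraph admits a good filtration'' is exactly the bookkeeping at the heart of the Fuchino--Sakai--Soukup--Usuba equivalence, and it must be done so that the $\om_1$-sized subgraphs and the $[I]^\om$-stationarity line up precisely.

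For clause (2) I would exploit the two standard features of a c.c.c.\ poset: it preserves stationarity of subsets of $[\mu]^\om$, and every countable set of ordinals in the extension is covered by a ground-model countable set. Given a stationary $E$ and a map $g$ in $V[G]$, the countable covering lets me dominate $g$ by a ground-model function with countable values, apply $\FRP$ in $V$ to obtain $I$, and then transfer the stationarity of the associated $[I]^\om$-set back to $V[G]$ by stationarity preservation; the delicate point is matching the extension data with ground-model data, for which the countable covering is indispensable. For clause (3), with $\ka$ supercompact and $\bbP=\coll(\om_1,<\ka)$ (which is $\sigma$-closed, preserves $\om_1$, and forces $\ka=\om_2$), I would fix a regular $\mu\ge\om_2$ in $V[G]$ together with a supercompactness embedding $j\colon V\to M$ having $\mathrm{crit}(j)=\ka$, $j(\ka)>\mu$, and $M^\mu\subseteq M$. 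Since every condition of $\bbP$ has size $\le\om_1<\ka$, $j$ fixes $\bbP$ pointwise, so $j(\bbP)$ factors as $\bbP$ times a $\sigma$-closed tail; constructing a generic for the tail by $\sigma$-closure and lifting $j$ to $V[G]$, I would run the Foreman--Magidor--Shelah reflection argument to pull a suitable $I\in[\mu]^{\om_1}$ back from $M$, thereby establishing $\FRP(\mu)$, and hence $\FRP$; by clause (1) this also yields $\Refl(\col)$.
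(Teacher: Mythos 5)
For the parts of this Fact that the paper actually proves, your route matches the paper's. Clause (3) is proved in the paper exactly as you describe: lift a $\la$-supercompact embedding $j$ through $\coll(\om_1,<\ka)$ using the $\sigma$-closed tail $\coll(\om_1,[\ka,j(\ka)))$, observe that the target stationary set $D\subseteq[\la]^\om$ survives, and pull $j``\la$, $j``D$ back by elementarity. For the direction $\FRP\Rightarrow\Refl(\col)$ of clause (1), the paper's argument (Corollary \ref{frp to col} via the proof of Proposition \ref{1.7}) is the same reduction you propose: pass to a regular $\ka$ with all smaller subgraphs of countable coloring number, extract the stationary set $S$ from Fact \ref{2.8+}, feed it to $\FRP(\ka)$, and contradict countable coloring number of the resulting size-$\om_1$ subgraph. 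The only stylistic difference is that the paper derives the contradiction by pressing down on the stationary set in $[I]^\om$ against a function $f:Y\to[Y]^{<\om}$ from Fact \ref{2.2+}, whereas you re-apply the filtration criterion of Fact \ref{2.8+} to $W$; both work. The converse of (1) and clause (2) are not proved in the paper at all (they are cited to \cite{Fetal}, \cite{FSSU}), and your sketches of them are honest but incomplete --- you yourself flag the unresolved bookkeeping in the converse of (1), and in (2) the step of replacing the extension's stationary $E$ and the map $g$ by ground-model data is exactly where the published proof needs a reformulation of $\FRP$, not just countable covering.

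There is one concrete gap in your argument for $\FRP\Rightarrow\Refl(\col)$: you fix witnesses $\beta_\alpha\ge\alpha$ for $\alpha\in S$ but never make the assignment $\alpha\mapsto\beta_\alpha$ injective. This matters. A single ordinal $\beta\ge\sup(I)$ can serve as the witness for every $\alpha\in S\cap\sup(I)$, with all the sets $g(\sup x_\xi)$ equal to one fixed countable subset of $\calE^\beta$; then your graph $W$ is just $I$ together with one extra vertex joined to a countable set, which has countable coloring number (put $\beta$ first in the well-ordering), and no contradiction arises --- equivalently, in your enumeration of $W$ the unique witness sits inside the initial segments from some stage on, so the ``bad'' set in Fact \ref{2.8+} need not be stationary. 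The fix is the thinning the paper performs in Proposition \ref{1.7}: intersect $S$ with the club $C=\{\alpha<\ka\mid\beta_{\alpha'}<\alpha$ for all $\alpha'\in S\cap\alpha\}$ \emph{before} invoking $\FRP$, which forces $\alpha\mapsto\beta_\alpha$ to be injective on $S\cap C$ (since $\beta_\alpha\ge\alpha$) and guarantees that the witnesses attached to distinct stages of your filtration are distinct and appear late enough in the enumeration.
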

See also Corollary \ref{frp to col}, which provides the proof of
$\FRP \Rightarrow \Refl(\col)$.
For completeness, let us sketch the proof of (3).
\begin{proof}
Take a $(V, \coll(\om_1, <\ka))$-generic $G$.
In $V[G]$, fix a regular cardinal $\la \ge \ka$.
We show that $\FRP(\la)$ holds in $V[G]$.
Take a stationary $E \subseteq \la \cap \Cof(\om)$,
and $g:E \to [\la]^\om$ with $g(\alpha) \subseteq \alpha$.
Let $D=\{x \in [\la]^\om \mid \sup(x) \in E, g(\sup(x)) \subseteq x\}$.
$D$ is stationary in $[\la]^\om$.

In $V$, take a $\la$-supercompact elementary embedding $j:V \to M$ with
critical point $\ka$, that is, ${}^\la M \subseteq M$
and $\la<j(\ka)$.
Then take a $(V[G], \coll(\om_1, [\ka, j(\ka))))$-generic $G_{tail}$.
We can take a $(V, \coll(\om_1, <j(\ka)))$-generic $j(G)$
with $j(G) \cap \coll(\om_1, <\ka)=G$ and $V[j(G)]=V[G][G_{tail}]$.
Since $\coll(\om_1, [\ka, j(\ka)))$ is $\sigma$-closed in $V[G]$,
we have that $M[j(G)]$ is closed under $\om$-sequences and 
$D$ remains stationary in $V[j(G)]$.
In addition one can check that $j``D=\{x \in [j``\la]^\om \mid \sup(x) \in j(E), j(g)(\sup(x)) \subseteq x\}$,
$j``D \in M[j(G)]$, $\size{j``\la}=\om_1$, and 
$j``D$ is stationary in $[j``\la]^\om$ in $M[j(G)]$.
Hence in $M[j(G)]$, 
$j``\la$ and $j``D$ witness the statement
that ``there is $I \in [j(\la)]^{\om_1}$ such that
$\sup(I) \notin I$, $\cf(\sup(I))=\om_1$,
and $\{x \in [I]^\om \mid \sup(x) \in j(E), j(g)(\sup(x)) \subseteq x\}$ is stationary''.
By the elementarity of $j$,
it holds in $V[G]$ that
 ``there is $I \in [\la]^{\om_1}$ such that
$\sup(I) \notin I$, $\cf(\sup(I))=\om_1$,
and $\{x \in [I]^\om \mid \sup(x) \in E, g(\sup(x)) \subseteq x\}$ is stationary''.
\end{proof}

The following may be a kind of folklore,
and the author found the proof of it in \cite{FS}.

\begin{fact}
\label{non-ref ladder}
Let $\ka$ be a regular uncountable cardinal, and
suppose $S \subseteq \ka \cap \Cof(\om)$ is a non-reflecting stationary set.
Then there is a graph $X$ of size $\ka$ such that
$\col(X)>\om$ but $\col(Y) \le \om$ for every subgraph $Y$ of $X$ with size $<\ka$.
In particular, if $\Refl(\col, \ka)$ holds for some regular $\ka \ge \om_2$,
then every stationary subset of $\ka \cap \Cof(\om)$ is reflecting.
\end{fact}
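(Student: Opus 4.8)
The plan is to realize $X$ as a ladder-system graph on $S$ and to squeeze the combinatorial force of non-reflection out of \emph{continuous} filtrations that avoid $S$. For each $\alpha\in S$ fix a strictly increasing cofinal sequence $\seq{c_\alpha(n)\mid n<\om}$ with limit $\alpha$, and set $X=\seq{\ka,\calE}$ with $\calE=\{\{c_\alpha(n),\alpha\}\mid \alpha\in S,\ n<\om\}$; thus a vertex $\beta$ is joined to the points $c_\beta(n)$ below it (only when $\beta\in S$) and to those $\alpha\in S$ above it with $\beta$ on their ladder. To see $\col(X)>\om$ directly, I would suppose $\col(X)\le\om$ and use Fact \ref{2.2+} to get $f:\ka\to[\ka]^{<\om}$ with $u\in f(v)$ or $v\in f(u)$ for each edge $\{u,v\}$. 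Since $f(\alpha)$ is finite, for every $\alpha\in S$ there is $n$ with $\alpha\in f(c_\alpha(n))$; sending $\alpha$ to the least such $c_\alpha(n)<\alpha$ is regressive on the stationary set $S$, hence constant, say $\equiv\xi$, on a stationary $S'\subseteq S$ by Fodor's lemma. Then $S'\subseteq f(\xi)$ is infinite, a contradiction. (Alternatively, once the next step is in place, $\col(X)>\om$ follows from Fact \ref{2.8+}, as $\size{\alpha\cap\calE^\alpha}=\om$ for each $\alpha\in S$ witnesses clause (1) there.)

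For the small subgraphs I would use that, $\ka$ being regular, every subgraph of size $<\ka$ sits inside some induced $X\restriction\ga$ with $\ga<\ka$, so it suffices to prove $\col(X\restriction\ga)\le\om$ for all $\ga<\ka$ by induction on $\ga$. The successor case (place the new vertex first) and the $\cf(\ga)=\om$ case (use a cofinal sequence of successor ordinals, which automatically avoid $S$) are routine and need no reflection hypothesis. The decisive case is $\cf(\ga)>\om$: here non-reflection yields a club $C\subseteq\ga$ with $C\cap S=\emptyset$, and I enumerate $C\cup\{0\}$ as a \emph{continuous} increasing cofinal sequence $\seq{\delta_i\mid i<\cf(\ga)}$. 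The key estimate is that $\size{\calE^x\cap\delta_i}<\om$ for every $x\ge\delta_i$: if $x=\delta_i$ then $\delta_i\notin S$ has no smaller neighbor at all, while if $x>\delta_i$ only finitely many ladder points of $x$ can lie below $\delta_i$ and all remaining neighbors of $x$ lie above $x$. Each block $X\restriction[\delta_i,\delta_{i+1})$ is an induced subgraph of $X\restriction\delta_{i+1}$ with $\delta_{i+1}<\ga$, so the induction hypothesis gives it a well-ordering of finite back-degree; concatenating these block-orderings in the order of the $\delta_i$ produces a well-ordering of $X\restriction\ga$ in which each vertex $x$ has only the finitely many smaller neighbors in $\calE^x\cap\delta_i$ plus the finitely many from its own block. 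By the definition of the coloring number (equivalently Fact \ref{2.2+}) this yields $\col(X\restriction\ga)\le\om$.

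The hard part, and the precise point where non-reflection is indispensable, is the demand that the avoiding set be \emph{closed}. A merely cofinal $S$-avoiding set (say of successors) will not do for $\cf(\ga)>\om$: covering the limit points of $\ga$ forces the filtration to be continuous, but a continuous cofinal sequence contains its own limits, and such a limit — having cofinally many predecessors — would pick up infinite back-degree the moment it lands in $S$. It is exactly the non-stationarity of $S\cap\ga$ granted by non-reflection that lets me keep the filtration continuous and disjoint from $S$ simultaneously; I would flag this as the crux of the argument. Finally, for the ``in particular'' clause I would argue by contradiction: if $\ka\ge\om_2$ is regular, $\Refl(\col,\ka)$ holds, and some stationary $S\subseteq\ka\cap\Cof(\om)$ were non-reflecting, then the graph $X$ above has size $\ka$ and $\col(X)>\om$, so $\Refl(\col,\ka)$ delivers a subgraph $Y$ of size $\om_1<\ka$ with $\col(Y)>\om$; but every subgraph of $X$ of size $<\ka$ has coloring number at most $\om$, a contradiction. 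Hence every stationary subset of $\ka\cap\Cof(\om)$ reflects.
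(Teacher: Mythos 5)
Your proof is correct, and its first half --- the ladder-system graph on $S$ and the Fodor argument showing $\col(X)>\om$ --- coincides with the paper's. For the initial segments you take a genuinely different route. The paper fixes $\delta<\ka$, uses the non-stationarity of $S\cap\delta$ to disjointify the ladder tails (a regressive $g$ with $\{c_\alpha\setminus g(\alpha)\mid \alpha\in S\cap\delta\}$ pairwise disjoint, via a lemma of Eisworth), and then directly exhibits a witness $f:X\cap\delta\to[X\cap\delta]^{<\om}$ for clause (2) of Fact \ref{2.2+}: each ladder point outside the discarded finite initial piece is charged to the unique $\alpha$ owning it. You instead induct on $\ga$ and, in the decisive case $\cf(\ga)>\om$, turn the same non-stationarity into a continuous club filtration avoiding $S$, verify that back-degrees into each stage are finite, and conclude via clause (3) of Fact \ref{2.2+} (equivalently, by concatenating block well-orderings). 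Both arguments spend non-reflection at exactly the same point; yours trades the disjointification lemma for an induction and is slightly more self-contained, while the paper's is a one-shot construction with no recursion on $\ga$. A minor difference: the paper additionally arranges $c_\alpha\cap S=\emptyset$, which keeps $S$ disjoint from the set of ladder points and streamlines its case split; your argument does not need this, since in your key estimate every neighbor of $x$ other than its own ladder points lies above $x$. Your handling of the ``in particular'' clause is the same as the paper's.
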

\begin{proof}
For $\alpha \in S$, take a cofinal set $c_\alpha \subseteq \alpha$ in $\alpha$ with
order type $\om$ and $c_\alpha \cap S=\emptyset$.
The vertex set of the graph $X$ is $S \cup \bigcup\{c_\alpha \mid \alpha \in S\}$,
and the edge set $\calE$ is defined by $\beta \relE \alpha \iff \alpha \in S$ and $\beta \in c_\alpha$.
We check  that the graph $X$ is as required.

For proving $\col(X)>\om$, suppose to the contrary that $\col(X) \le \om$.
Then by Fact \ref{2.2+}, there is $f:X \to [X]^{<\om}$ 
such that whenever $\beta \relE \alpha$, we have $\alpha \in f(\beta)$ or $\beta \in f(\alpha)$.
For $\alpha \in S$, since $f(\alpha)$ is finite but $c_\alpha$ is infinite,
we can choose $\gamma_\alpha \in c_\alpha \setminus f(\alpha)$.
By Fodor's lemma, there is $\gamma<\ka$ such that
the set $\{\alpha \in S \mid \gamma=\gamma_\alpha\}$ is stationary.
However then $\{\alpha \in S \mid \gamma=\gamma_\alpha\} \subseteq f(\gamma)$, this is impossible.

Next, take $\delta<\ka$ such that
$X \cap \delta=(S \cap \delta) \cup \bigcup\{c_\alpha \mid \alpha \in S \cap \delta\}$.
 We show that $\col(X \cap \delta) \le \om$.
Since $S \cap \delta$ is non-stationary in $\delta$, we can find a function $g$ on $S \cap \delta$
such that $g(\alpha)<\alpha$ and $\{c_\alpha \setminus g(\alpha) \mid \alpha \in S \cap \delta\}$
is a pairwise disjoint family (e.g., see Lemma 2.12 in Eisworth \cite{Eisworth}).
Note that $g(\alpha) \cap c_\alpha$ is finite.
Define $f:X \cap \delta \to [X \cap \delta]^{<\om}$ as follows:
For $\alpha \in X \cap \delta$, if $\alpha \in S \cap \delta$ then
$f(\alpha)=g(\alpha) \cap c_\alpha$.
If $\alpha \notin S \cap \delta$ and $\alpha \in g(\alpha') \cap c_{\alpha'}$ for some $\alpha' \in S \cap \delta$,
then $f(\alpha)=\emptyset$.
If $\alpha \notin S \cap \delta$ but $\alpha \notin g(\alpha') \cap c_{\alpha'}$ for every $\alpha' \in S\cap \delta$,
there is a unique $\alpha^* \in S \cap \delta$ with
$\alpha \in c_{\alpha^*} \setminus g(\alpha^*)$.
Then let $f(\alpha)=\{\alpha^*\}$.
One can check that $\alpha \relE \beta \Rightarrow \alpha \in f(\beta)$ or $\beta \in f(\alpha)$,
hence $\col(X \cap \delta) \le \om$ by Fact \ref{2.2+}.
\end{proof}

\section{Combinatorial results}\label{sec3}
In this section, we present some combinatorial results about the list-chromatic number  and the coloring number.

By the forcing method, Komj\'ath \cite{Komjath} showed the consistency 
of the statement that for every graph $X$, if $\col(X)$ is infinite  then
$\List(X)=\col(X)$.
We give another proof of Komj\'ath's result, in fact
the diamond principle is sufficient to obtain it.

\begin{prop}\label{2.7+}
Let $\ka$ be a regular uncountable cardinal  and $\la<\ka$ an infinite cardinal.
Suppose $\diamondsuit(S)$ holds for every stationary $S \subseteq \ka \cap \Cof(\cf(\la))$.
Then for every graph $X=\seq{\ka, \calE}$,
if  $\col(X)>\la$ but $\col(Y) \le \la$ for every subgraph $Y$ of $X$ with size $<\ka$,
then $\List(X)>\la$.
\end{prop}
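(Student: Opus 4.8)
The plan is to exhibit a single $\la$-assignment $F$ of $X$ admitting no good selection, built by a $\diamondsuit$-guided recursion. First I would apply Fact~\ref{2.8+}: since $\col(X)>\la$ while every subgraph of size $<\ka$ has coloring number $\le\la$, the set $S=\{\alpha\in\ka\cap\Cof(\cf(\la))\mid \exists\beta\ge\alpha\,(\size{\alpha\cap\calE^\beta}\ge\la)\}$ is stationary in $\ka$. For each $\alpha\in S$ I fix a \emph{witness} $\beta(\alpha)\ge\alpha$ together with a \emph{pool} $N_\alpha\in[\alpha\cap\calE^{\beta(\alpha)}]^\la$. Note that $S\subseteq\ka\cap\Cof(\cf(\la))$, so the hypothesis supplies $\diamondsuit(S)$, which is exactly the instance I will use.

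The naive idea—make $F(\beta(\alpha))$ equal to the set of colors that a guessed coloring assigns to $N_\alpha$, thereby forcing $\beta(\alpha)$ to clash with a neighbor—fails outright, because a good coloring may use very few colors on $N_\alpha$ (already for bipartite obstructions such as the ladder-system graph of Fact~\ref{non-ref ladder} a good coloring uses only two colors), so the intended list could have size $<\la$ and hence be an illegal entry of a $\la$-assignment. The device that repairs this is \emph{disjoint palettes}. I would attach to each vertex $\gamma$ a palette $P_\gamma\in[\mathrm{ON}]^\la$ with the $P_\gamma$ pairwise disjoint, use $P_\gamma$ as the default list, and arrange (see the last paragraph) that every pool $N_\alpha$ consists of non-witness vertices. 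Then for any selection $f$ with $f(\gamma)\in F(\gamma)\subseteq P_\gamma$ the restriction $f\restriction N_\alpha$ is automatically injective, so $\{f(\gamma)\mid\gamma\in N_\alpha\}$ always has size exactly $\la$.

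Since all palettes can be taken inside $\ka$ (using $\la\cdot\ka=\ka$), a candidate coloring is a function $f:\ka\to\ka$, which I code as a subset of $\ka$; $\diamondsuit(S)$ then yields $\seq{h_\alpha\mid\alpha\in S}$, with $h_\alpha:\alpha\to\ka$, such that $\{\alpha\in S\mid h_\alpha=f\restriction\alpha\}$ is stationary for every $f:\ka\to\ka$. I would now define $F$ by: for a witness $\delta=\beta(\alpha)$ put $F(\delta)=\{h_\alpha(\gamma)\mid\gamma\in N_\alpha\}$ when this set lies in $[\mathrm{ON}]^\la$, and $F(\delta)=P_\delta$ otherwise; every non-witness gets its palette. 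To verify that the assignment works, suppose $f$ were a good coloring with $f(x)\in F(x)$ for all $x$. Choosing $\alpha$ in the (stationary, hence nonempty after intersecting with the relevant coding club and the set of successfully processed stages) correct-guess set, the injectivity from the palettes gives $\size{\{f(\gamma)\mid\gamma\in N_\alpha\}}=\la$, so the trap at $\beta(\alpha)$ was actually installed and $F(\beta(\alpha))=\{f(\gamma)\mid\gamma\in N_\alpha\}$. As $\beta(\alpha)\relE\gamma$ for every $\gamma\in N_\alpha$, goodness forces $f(\beta(\alpha))\notin\{f(\gamma)\mid\gamma\in N_\alpha\}=F(\beta(\alpha))$, contradicting $f(\beta(\alpha))\in F(\beta(\alpha))$.

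The main obstacle is the bookkeeping hidden in ``arrange that every pool consists of non-witness vertices''. I need the witness map $\alpha\mapsto\beta(\alpha)$ to be injective (so that $F(\delta)$ is unambiguous) and each pool $N_\alpha$ to be disjoint from the set of all chosen witnesses (so that override lists, which live in neighboring palettes, cannot spoil the palette-disjointness inside any pool). I would secure this by a recursion on $\alpha\in S$: at stage $\alpha$ fewer than $\ka$ witnesses have been committed, so I try to pick a fresh $\beta(\alpha)\ge\alpha$ witnessing $\alpha\in S$ together with a pool $N_\alpha\subseteq\alpha\cap\calE^{\beta(\alpha)}$ avoiding the committed witnesses, skipping $\alpha$ otherwise. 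The delicate point—and where the cofinality restriction $\Cof(\cf(\la))$ and a Fodor/counting argument enter—is to show that the skipped stages form a non-stationary subset of $S$, so that the set of successfully trapped stages still meets every stationary correct-guess set; this is the step I expect to require the most care.
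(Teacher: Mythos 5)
Your overall architecture (Fact~\ref{2.8+} to extract the stationary set $S$, a $\diamondsuit(S)$-guess of a candidate coloring, pairwise disjoint default palettes to force injectivity on the pool, and an override list planted at the witness $\beta(\alpha)$) matches the paper's proof up to the decisive step, and you have correctly isolated why the naive construction fails. But the repair you propose --- a recursion arranging that every pool consists of non-witnesses, together with the claim that the skipped stages form a non-stationary set --- is a genuine gap, and the claim is false in general. Take $\la=\om$, a non-reflecting stationary $S\subseteq\ka\cap\Cof(\om)$, and the ladder-system graph joining each $\alpha\in S\cap\lim(S)$ to a cofinal $c_\alpha\subseteq S\cap\alpha$ of order type $\om$ (a routine adaptation of the argument of Fact~\ref{non-ref ladder} still gives $\col(Y)\le\om$ for every $Y$ with $\size{Y}<\ka$, so the hypotheses of the proposition hold). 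In this graph the only witness for $\alpha$ is $\beta(\alpha)=\alpha$ itself and the only admissible pools are infinite subsets of $c_\alpha$; hence the committed witnesses are exactly the processed stages $P$, and your recursion processes $\alpha$ iff $c_\alpha\setminus P$ is infinite. If the ladders are chosen adversarially with $c_\alpha\subseteq P\cap\alpha$ whenever $P\cap\alpha$ is cofinal in $\alpha$ (arrangeable by recursion, since $P\cap\alpha$ is determined by the ladders below $\alpha$), then $P$ becomes $\{\alpha : P\cap\alpha$ is bounded in $\alpha\}$, which is unbounded yet disjoint from the club of its own limit points, hence non-stationary. So not only can the skipped set be stationary; even the weaker fallback of applying $\diamondsuit$ to the processed set fails here, and the step you yourself flagged as requiring the most care is exactly where the argument breaks.

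The paper avoids this bookkeeping altogether with a different device: from a $\diamondsuit(S)$-sequence guessing \emph{two} colorings simultaneously it builds two $\la$-assignments $F$ and $G$, and proves that for any selections $f,g$, if $f$ is good then $g$ is not --- so one of $F,G$ already witnesses $\List(X)>\la$. The key is a dichotomy on the pool $x_\alpha$: either $f``x_\alpha$ has size $\la$ and the $F$-trap at $\beta(\alpha)$ fires; or, since $f$ is injective on the part of $x_\alpha$ whose $F$-lists are default palettes, that part has size $<\la$, so $\la$-many $\eta\in x_\alpha$ carry overridden $F$-lists --- and for precisely those $\eta$ the $G$-list is the default palette by construction, so $g$ is injective there, $g``x_\alpha$ has size $\la$, and the $G$-trap fires. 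This makes it irrelevant whether pool elements are themselves witnesses; the only bookkeeping left is injectivity of $\alpha\mapsto\beta(\alpha)$, obtained by shrinking $S$. To keep a single-assignment proof you would need a genuinely new idea at the flagged step; as written, the proposal does not go through.
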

\begin{proof}
Let $S=\{\alpha \in \ka \cap \Cof(\cf(\la))\mid  \exists \beta \ge \alpha\,(
\size{\alpha \cap \calE^\beta} \ge \la)\}$.
$S$ is stationary in $\ka$ by Fact \ref{2.8+}.
For each $\alpha \in S$,
take $\beta(\alpha) \ge \alpha$ with
$\size{\alpha \cap \calE^{\beta(\alpha)}} \ge \la$.
If necessary, by shrinking $S$ we may assume that 
for every $\alpha, \alpha' \in S$, if $\alpha<\alpha'$ then $\beta(\alpha)<\beta(\alpha')$.

By our assumption, $\diamondsuit(S)$ holds.
Then by a standard  coding argument,
there is a sequence $\seq{d_\alpha, e_\alpha\mid \alpha \in S}$ such that
$d_\alpha, e_\alpha:\alpha \to \alpha$ and
for every $f, g :\ka \to \ka$,
the set  $\{\alpha \in S \mid f \restriction \alpha=d_\alpha, g \restriction \alpha=e_\alpha\}$
is stationary in $\ka$.

Fix a pairwise disjoint sequence $\seq{A_\beta\mid \beta<\ka}$ with $A_\beta \in [\ka]^\la$.
For each $\alpha \in S$,
fix a set $x_\alpha \subseteq \alpha \cap \calE^{\beta(\alpha)}$ with $\size{x_\alpha} = \la$.
Now we define two $\la$-assignments $F, G$ in the following manner.
For $\beta <\ka$,
\begin{enumerate}
\item If $\beta \neq \beta(\alpha)$ for every $\alpha \in S$, then $F(\beta)=G(\beta)=A_\beta$.
\item Suppose $\beta=\beta(\alpha)$ for some (unique) $\alpha \in S$.
\begin{enumerate}
\item If $\size{d_\alpha``x_\alpha}=\la$,
then $F(\beta)=d_\alpha``x_\alpha$ and $G(\beta)=A_\beta$.
\item If $\size{d_\alpha``x_\alpha}<\la$ and
$\size{e_\alpha``x_\alpha}=\la$,
then $F(\beta)=A_\beta$ and $G(\beta)=e_\alpha``x_\alpha$.
\item If $\size{d_\alpha``x_\alpha}, \size{e_\alpha``x_\alpha}<\la$,
let $F(\beta)=G(\beta)=A_\beta$.
\end{enumerate}
\end{enumerate}
Note that if $F(\beta) \neq A_\beta$, then
the condition (2)(a) must be applied to $\beta$, so $G(\beta)$ is $A_\beta$.

Take arbitrary colorings $f, g :\ka \to \ka$ with
$f(\beta) \in F(\beta)$ and $g(\beta) \in G(\beta)$.
We show that if $f$ is good then $g$ is not good.

Now suppose $f$ is good.
Take $\alpha \in S$ with $f \restriction \alpha=d_\alpha$ and $g \restriction \alpha=e_\alpha$.
We know $f \restriction x_\alpha=d_\alpha \restriction x_\alpha$
and $g \restriction x_\alpha=e_\alpha \restriction x_\alpha$.
Let $\beta=\beta(\alpha)$.
If $\size{d_\alpha``x_\alpha}=\lambda$,
then $F(\beta)=d_\alpha``x_\alpha=f``x_\alpha$.
Since $f(\beta) \in F(\beta)$, there is $\eta \in x_\alpha$ with
$f(\eta)=f(\beta)$, this is a contradiction because $\eta \in x_\alpha \subseteq \calE^\beta$.
Hence we have $\size{d_\alpha``x_\alpha}<\la$.
In addition,
since the family $\seq{A_\beta \mid \beta<\ka}$ is pairwise disjoint,
we know that  the function $f \restriction \{\eta \in x_\alpha \mid F(\eta)=A_\eta\}$ is  
injective.
Thus we have that $\size{\{\eta \in x_\alpha \mid F(\eta) = A_\eta\}}<\la$,
otherwise we have $\size{d_\alpha``x_\alpha}=\la$.
Hence the set $\{\eta \in x_\alpha \mid F(\eta) \neq A_\eta\}$ has cardinality $\la$.
For $\eta \in x_\alpha$ with $F(\eta) \neq A_\eta$,
we know that $G(\eta)=A_\eta$.
By the same reason before, the map $g \restriction \{\eta \in x_\alpha \mid F(\eta) \neq A_\eta\}$ 
is injective, and we have $\size{e_\alpha``x_\alpha}=\size{g ``x_\alpha}=\la$.
Then $G(\beta)=e_\alpha``x_\alpha$,
and we can find $\eta \in x_\alpha$ with $g(\eta)=g(\beta)$.
Therefore $g$ is not good.
\end{proof}

\begin{cor}\label{2.10+}
Let $\ka$ be an uncountable cardinal and $\la <\ka$ an infinite cardinal.
Suppose that for every regular uncountable $\mu<\ka$ and every 
stationary $S \subseteq \mu \cap \Cof(\cf(\la))$,
$\diamondsuit(S)$ holds.
Then for every graph $X$ of size $<\ka$,
$\col(X)>\la$ if and only if $\List(X)>\la$.
\end{cor}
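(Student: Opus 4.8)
The plan is to reduce the statement about an arbitrary graph $X$ of size $<\ka$ to Proposition \ref{2.7+}, which handles graphs whose vertex set is a \emph{regular} cardinal and which are ``critical'' in the sense that every proper initial subgraph already has coloring number $\le\la$. One direction is free: $\List(X)\le\col(X)$ always holds, so $\List(X)>\la$ immediately gives $\col(X)>\la$, and no hypothesis is needed for this implication. The substance is the forward direction, $\col(X)>\la\Rightarrow\List(X)>\la$.

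So suppose $\col(X)>\la$ with $\size{X}<\ka$. First I would invoke Corollary \ref{2.10+++}: since $\col(X)>\la$, the graph $X$ has a subgraph $Y$ whose cardinality $\mu=\size{Y}$ is regular and uncountable, with $\col(Y)>\la$ while $\col(Z)\le\la$ for every subgraph $Z$ of $Y$ of size $<\mu$. Because $\size{X}<\ka$ we have $\mu<\ka$, and since $\col(Y)>\la$ we necessarily have $\la<\mu$ (a graph of size $\le\la$ has coloring number $\le\la$). Thus $\mu$ is a regular uncountable cardinal with $\la<\mu<\ka$, and after relabeling the vertices we may present $Y$ in the form $\seq{\mu,\calE}$ required by Proposition \ref{2.7+}.

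Next I would check the diamond hypothesis of Proposition \ref{2.7+} for this particular $\mu$. The assumption of the corollary says exactly that for every regular uncountable $\mu<\ka$ and every stationary $S\subseteq\mu\cap\Cof(\cf(\la))$, $\diamondsuit(S)$ holds; since our $\mu$ satisfies $\la<\mu<\ka$ and is regular uncountable, this supplies precisely the $\diamondsuit(S)$ instances that Proposition \ref{2.7+} consumes. Applying Proposition \ref{2.7+} to the graph $Y=\seq{\mu,\calE}$, with $\mu$ in the role of $\ka$ there, yields $\List(Y)>\la$. Finally, since $Y$ is a subgraph of $X$ and the list-chromatic number is monotone under taking subgraphs, $\List(X)\ge\List(Y)>\la$, which completes the proof.

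The only point requiring any care is the bookkeeping of cardinals: one must confirm that the witness $\mu$ extracted from Corollary \ref{2.10+++} genuinely lands in the range $(\la,\ka)$ and is regular, so that both the diamond hypothesis and the structural hypotheses of Proposition \ref{2.7+} apply verbatim. I expect no genuine obstacle here, since the inequality $\la<\mu$ follows from $\col(Y)>\la$ together with the trivial bound $\col(Z)\le\size{Z}$, and $\mu<\ka$ follows from $\mu\le\size{X}<\ka$; everything else is a direct citation of the two quoted results.
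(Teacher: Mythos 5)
Your proposal is correct and follows essentially the same route as the paper: extract a critical regular uncountable subgraph $Y$ via Corollary \ref{2.10+++} and then apply Proposition \ref{2.7+} to it, using monotonicity of $\List$ under subgraphs. The extra bookkeeping you supply (that $\la<\size{Y}<\ka$, so the diamond hypothesis and the hypotheses of Proposition \ref{2.7+} really do apply) is correct and is left implicit in the paper.
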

\begin{proof}
Take a graph $X$ of size $<\ka$ and
$\col(X)>\la$.
We shall prove that $\List(X)>\la$.
By Corollary \ref{2.10+++},
there is a subgraph $Y$ of $X$
such that $\size{Y}$ is regular uncountable, $\col(Y)>\la$,
and $\col(Z) \le \la$ for every subgraph $Z$ of $Y$ with $\size{Z}<\size{Y}$.
Then, by Proposition \ref{2.7+}, we have $\List(Y) >\la$,
hence $\List(X)>\la$.
\end{proof}


Now we have Theorem \ref{thm5}.
\begin{cor}\label{3.3++}
Suppose that for every regular uncountable $\ka$ and stationary $S \subseteq \ka$,
$\diamondsuit(S)$ holds.
Then for every graph $X$,
if $\col(X)$ is infinite then
$\col(X)=\List(X)$.
\end{cor}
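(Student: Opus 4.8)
The plan is to reduce the whole statement, by means of Corollary~\ref{2.10+}, to the single assertion that an infinite coloring number forces an infinite list-chromatic number. Since $\List(X)\le\col(X)$ always holds, it suffices to prove $\col(X)\le\List(X)$. Write $\mu=\col(X)$, an infinite cardinal, and put $\ka=\size{X}^+$, so that $X$ has size $<\ka$ and $\mu\le\size{X}<\ka$. The global $\diamondsuit$-hypothesis yields in particular that, for every regular uncountable $\nu<\ka$, every infinite $\la<\ka$, and every stationary $S\subseteq\nu\cap\Cof(\cf(\la))$, the principle $\diamondsuit(S)$ holds; hence Corollary~\ref{2.10+} applies to $X$ at every infinite threshold $\la<\ka$, giving $\col(X)>\la\iff\List(X)>\la$ for all infinite $\la\le\size{X}$.

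Granting for the moment that $\List(X)$ is infinite, I would finish as follows. Suppose toward a contradiction that $\List(X)<\mu$. Then $\la:=\List(X)$ is an infinite cardinal with $\la<\mu=\col(X)\le\size{X}<\ka$, so Corollary~\ref{2.10+} applies with this $\la$. Since $\col(X)=\mu>\la$, it returns $\List(X)>\la$, i.e.\ $\List(X)>\List(X)$, which is absurd. Therefore $\List(X)=\mu=\col(X)$, as desired. Thus the entire corollary collapses to the claim that, if $\col(X)$ is infinite, then $\List(X)$ is infinite.

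For that claim I would split on $\mu$. When $\mu>\om$, no extra work is needed: applying Corollary~\ref{2.10+} with $\la=\om$ (legitimate, as $\om<\mu\le\size{X}<\ka$) together with $\col(X)>\om$ gives $\List(X)>\om$ outright. The genuinely separate case is $\mu=\om$, which I expect to be the main obstacle: here Corollary~\ref{2.10+} is silent, because its comparison runs only over \emph{infinite} thresholds $\la$, and already the least such threshold fails $\col(X)>\om$. One therefore needs the $\diamondsuit$-free fact that $\col(X)=\om$ implies $\List(X)\ge\om$, that is, that $X$ is not $k$-choosable for any finite $k$.

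My approach to this residual fact would be combinatorial and local. The coloring number being $\le n$ is finitely determined for finite $n$ (a standard compactness argument on orientations of out-degree $<n$), so $\col(X)\ge\om$ forces finite subgraphs of arbitrarily large coloring number, hence—using that $\col=\mathrm{degeneracy}+1$ for finite graphs—finite subgraphs of arbitrarily large minimum degree. It then remains to know that a finite graph of minimum degree $\ge d$ has list-chromatic number tending to infinity with $d$; granting this, $\List(X)\ge\List(F)\to\infty$ over such subgraphs $F$, whence $\List(X)=\om$. This minimum-degree bound is the one external input (it is Alon's theorem, and it can also be obtained by exhibiting, for each $k$, an unbalanced complete bipartite configuration $K_{k,2^{k}}$ to which Fact~\ref{0.3} applies with the finite parameters $\la_0=\mu=k$); alternatively, since Corollary~\ref{3.3++} is advertised as a reproof of Komj\'ath's theorem, one may simply invoke Komj\'ath~\cite{Komjath} for the base case $\col(X)=\om$. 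I expect this $\om$-level extremal step, rather than the diamond machinery, to be where the real work lies.
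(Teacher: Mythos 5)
Your proof follows the paper's own argument essentially verbatim: the paper disposes of the case $\col(X)=\om$ by citing Komj\'ath \cite{Komjath} and handles $\col(X)>\om$ via Corollary~\ref{2.10+}, which is exactly your decomposition (you merely spell out the contradiction step with $\la=\List(X)$ and the application at $\la=\om$ that the paper leaves implicit). The only blemish is your parenthetical claim that the minimum-degree bound can be recovered by locating a copy of $K_{k,2^k}$ inside every finite graph of large minimum degree --- this is false (graphs of girth $>4$ contain no $K_{2,2}$ yet have arbitrarily large minimum degree) --- but it is harmless, since your primary route (Alon's theorem) and your fallback (citing Komj\'ath for the base case, as the paper does) both work.
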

\begin{proof}
Komj\'ath \cite{Komjath} proved 
that if $\col(X)=\om$, then $\List(X)=\om$.
The case $\col(X)>\om$ follows from Corollary \ref{2.10+}.
\end{proof}

\begin{cor}\label{3.4.1}
If $\diamondsuit(S)$ holds for every stationary $S \subseteq \om_1$,
then every graph of size $\om_1$ with uncountable coloring number has uncountable
list-chromatic number.
\end{cor}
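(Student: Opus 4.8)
The plan is to deduce this immediately from Corollary \ref{2.10+}, applied with $\ka=\om_2$ and $\la=\om$. The first step is to check that the diamond hypotheses line up. Corollary \ref{2.10+} with these parameters demands $\diamondsuit(S)$ for every regular uncountable $\mu<\om_2$ and every stationary $S\subseteq\mu\cap\Cof(\cf(\om))=\mu\cap\Cof(\om)$. The only regular uncountable cardinal below $\om_2$ is $\om_1$, so the requirement reduces to $\diamondsuit(S)$ for every stationary $S\subseteq\om_1\cap\Cof(\om)$. Since every stationary subset of $\om_1\cap\Cof(\om)$ is in particular a stationary subset of $\om_1$, this is furnished verbatim by the hypothesis of the present corollary.

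With the hypothesis of Corollary \ref{2.10+} in hand, I would invoke it to conclude that for every graph $X$ of size $<\om_2$ one has $\col(X)>\om$ if and only if $\List(X)>\om$. A graph of size $\om_1$ has size $<\om_2$, so the equivalence applies to it; reading off the forward direction, $\col(X)>\om$ (uncountable coloring number) yields $\List(X)>\om$ (uncountable list-chromatic number), which is exactly the assertion.

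There is no real obstacle here beyond the bookkeeping of cofinalities: the one point worth stating explicitly is that $\om_1\cap\Cof(\om)$ contains all countable limit ordinals and is therefore a club in $\om_1$, so the diamond instances provided by the hypothesis are precisely the ones consumed by Corollary \ref{2.10+}. Alternatively, one could bypass Corollary \ref{2.10+} and argue directly: given a graph $X$ of size $\om_1$ with $\col(X)>\om$, Corollary \ref{2.10+++} produces a subgraph $Y\subseteq X$ with $\size{Y}$ regular uncountable, $\col(Y)>\om$, and $\col(Z)\le\om$ for every subgraph $Z$ of $Y$ of size $<\size{Y}$; since $\size{Y}\le\om_1$ is regular uncountable, necessarily $\size{Y}=\om_1$, and Proposition \ref{2.7+} (whose hypothesis on $\diamondsuit$ at stationary subsets of $\om_1\cap\Cof(\om)$ is again met) gives $\List(Y)>\om$, hence $\List(X)\ge\List(Y)>\om$.
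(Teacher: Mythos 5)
Your proposal is correct and matches the paper's intent exactly: Corollary \ref{3.4.1} is stated without proof precisely because it is the instance $\ka=\om_2$, $\la=\om$ of Corollary \ref{2.10+}, and your verification that the diamond hypothesis reduces to $\diamondsuit(S)$ for stationary $S\subseteq\om_1\cap\Cof(\om)$ (hence is covered by the stated assumption) is the only bookkeeping required. Your alternative route via Corollary \ref{2.10+++} and Proposition \ref{2.7+} is just the unwinding of the proof of Corollary \ref{2.10+} in this special case, so it is the same argument.
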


Next we prove the list-chromatic version of (3) $\Rightarrow$ (1) in Fact \ref{2.2+}.

\begin{lemma}\label{0.4}
Let $X$ be a graph and $\la$ an infinite cardinal.
Suppose there is a filtration $\seq{X_\alpha\mid \alpha<\delta}$ of $X$
such that for every $\alpha<\delta$ and $x \in X \setminus X_\alpha$,
we have $\List(X_\alpha) \le \la$ and $\size{\calE^x \cap X_\alpha}<\la$.
Then $\List(X) \le \la$.
\end{lemma}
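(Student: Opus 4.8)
The plan is to construct the coloring by transfinite recursion along the given filtration, coloring at each successor stage only the ``new'' vertices $Z_\alpha = X_{\alpha+1} \setminus X_\alpha$. Fix an arbitrary $\la$-assignment $F : X \to [\mathrm{ON}]^\la$; the goal is a good coloring $f$ of $X$ with $f(x) \in F(x)$ for all $x$. I would maintain the inductive invariant that after stage $\alpha$ the partial coloring $f$ is defined and good on $X_\alpha$ and satisfies $f(x) \in F(x)$ there. For each vertex $v$ let $\rho(v)$ be the least $\beta$ with $v \in X_\beta$; by continuity of the filtration $\rho(v)$ is either $0$ or a successor, so every vertex is introduced either in $X_0$ or in some layer $Z_\beta$.

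For the base stage, since $\List(X_0)\le\la$, applying the $\la$-assignment $F\restriction X_0$ to $X_0$ yields a good coloring inside the lists, which I take as $f\restriction X_0$. The key move is at a successor stage $\alpha\to\alpha+1$: for $x\in Z_\alpha$ shrink the list to absorb the already-colored neighbors by setting $F'(x)=F(x)\setminus\{f(y)\mid y\in\calE^x\cap X_\alpha\}$. Because $x\notin X_\alpha$, the hypothesis gives $\size{\calE^x\cap X_\alpha}<\la$, and since $\la$ is infinite, deleting $<\la$ many colors from a set of size $\la$ leaves $\size{F'(x)}=\la$; thus $F'$ is a genuine $\la$-assignment of $Z_\alpha$. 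As $Z_\alpha$ is a subgraph of $X_{\alpha+1}$ and $\List$ is monotone under subgraphs, $\List(Z_\alpha)\le\List(X_{\alpha+1})\le\la$, so $F'$ admits a good coloring $f_\alpha$ of $Z_\alpha$ with $f_\alpha(x)\in F'(x)\subseteq F(x)$; set $f\restriction Z_\alpha=f_\alpha$. At a limit stage the continuity of the filtration makes $f$ already defined on $X_\lambda=\bigcup_{\alpha<\lambda}X_\alpha$, so no new work is needed.

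It then remains to verify that the resulting $f$, defined on all of $X=\bigcup_{\alpha<\delta}X_\alpha$ with $f(x)\in F(x)$, is good. Given an edge $\{x,y\}$ with $\rho(x)\le\rho(y)=\gamma$: if $\gamma=0$ both endpoints lie in $X_0$ and goodness of the base coloring applies; if $\gamma=\beta+1$ then $y\in Z_\beta$ and $x\in X_{\beta+1}$, splitting into the same-layer case $x\in Z_\beta$, where $f_\beta$ is good on $Z_\beta$, and the mixed case $x\in X_\beta$, where $x\in\calE^y\cap X_\beta$ so that $f(x)$ was precisely the kind of color deleted in forming $F'(y)$; either way $f(x)\ne f(y)$. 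Hence $f$ is a good coloring and $\List(X)\le\la$. The construction is routine; the only point requiring care is the bookkeeping across the mixed successor case, which is exactly what the list-shrinking step $F'$ is arranged to guarantee, together with the observation that a $\la$-sized list survives deletion of $<\la$ colors because $\la$ is infinite.
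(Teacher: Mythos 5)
Your proposal is correct and follows essentially the same argument as the paper: transfinite recursion along the filtration, shrinking the list of each new vertex by the ($<\la$ many) colors of its already-colored neighbors, and using $\List(X_{\alpha+1})\le\la$ on the layer of new vertices. The only cosmetic difference is that you make the final goodness verification and the ``$\rho(v)$ is $0$ or a successor'' bookkeeping explicit, which the paper leaves implicit.
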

\begin{proof}
Fix a $\la$-assignment $F$ of $X$.
We construct a good coloring $f$ of $X$ with $f(x) \in F(x)$.
We do this by induction on $\alpha < \delta$.
Let  $\alpha < \delta$ and 
suppose $f \restriction X_\beta$ is defined to be a good coloring with 
$f(x) \in F(x)$ for every $\beta<\alpha$.
If $\alpha$ is limit, then let $f\restriction X_\alpha=\bigcup_{\beta<\alpha} f \restriction X_\beta$.
Suppose $\alpha=\gamma+1$.
Consider the induced subgraph $Y=X_\alpha \setminus X_\gamma$.
Note that $\List(Y) \le \List(X_\alpha)\le \la$.
By our assumption, 
for every $x  \in Y$, we have that $\calE^x \cap X_\gamma$ has cardinality $<\la$.
Hence $F'(x)=F(x) \setminus (f``(\calE^x \cap X_\gamma))$ has cardinality $\la$, and
$F'$ is a $\la$-assignment of $Y$.
Since $\List(Y) \le \la$, there is a good coloring $f'$
of $Y$  with $f'(x) \in F'(x)$.
Now let $f\restriction X_\alpha=(f \restriction X_\gamma) \cup f'$.
Finally, $f=\bigcup_{\alpha<\delta} f \restriction X_\alpha$ is a good coloring of $X$ with $f(x) \in F(x)$.
\end{proof}

As stated before, the singular compactness for the list-chromatic number does not 
hold in general.
On the other hand, the singular compactness can  hold under a certain cardinal arithmetic assumption.
The following proposition is Theorem \ref{thm4}.
\begin{prop}\label{1.3}
Let $\ka$ be a strong limit singular cardinal
such that there is a club $C$ in $\ka$ such that 
$2^\mu=\mu^+$ for every $\mu \in C$.
Let  $\la$ be  an infinite cardinal with $\la<\ka$.
For every graph $X$ of size $\ka$,
if $\List(Y) \le \la$ for every subgraph $Y$ of size $<\ka$,
then $\List(X) \le \la$.
\end{prop}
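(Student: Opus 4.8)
The plan is to reduce the statement to Lemma \ref{0.4}. Write the vertex set of $X$ as $\ka$. First note that the hypothesis on $C$ already makes $\ka$ strong limit: given $\sigma<\ka$, pick $\mu\in C$ with $\mu>\sigma$, so $2^\sigma\le 2^\mu=\mu^+<\ka$. Hence $2^\sigma<\ka$ and $\la^\sigma\le 2^{\max(\la,\sigma)}<\ka$ for every $\sigma<\ka$; in particular every subgraph of size $<\ka$ carries fewer than $\ka$ colorings and, by hypothesis, has list-chromatic number $\le\la$. What I want to produce is a filtration $\seq{X_i\mid i<\delta}$ of $X$ with $\size{X_i}<\ka$ and with the neighbour bound $\size{\calE^x\cap X_i}<\la$ for every $i$ and every $x\in X\setminus X_i$. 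Since then $\List(X_i)\le\la$ holds automatically, Lemma \ref{0.4} yields $\List(X)\le\la$ at once.

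The engine of the construction is the following consequence of Fact \ref{0.3} together with strong limitness, which I will call the \emph{few-neighbours lemma}: if $A\subseteq X$ with $\la\le\size A<\ka$, then $W(A):=\{x\in X\mid \size{\calE^x\cap A}\ge\la\}$ has size $<2^{\size A}<\ka$. Indeed, were $\size{W(A)}\ge 2^{\size A}$, then taking $Y_0=A$ and $Y_1\subseteq W(A)$ of size $2^{\size A}$, Fact \ref{0.3} (with $\la'=\size A$ and $\mu=\la$) would produce a subgraph $Y_0\cup Y_1$ of size $\le 2^{\size A}<\ka$ with $\List(Y_0\cup Y_1)>\la$, contradicting the hypothesis. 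I would then build, inside a suitable $\calH_\theta$, a $\subseteq$-increasing continuous chain $\seq{M_i\mid i<\cf\ka}$ of elementary submodels with $X,F,\la\in M_0$ and $2^\la\subseteq M_0$, where each $M_i$ has size $\rho_i^+$ for some $\rho_i\in C$ with $\rho_i>2^\la$ (so $(\rho_i^+)^\la=\rho_i^+$ by the GCH on $C$, i.e. $[M_i]^{\le\la}\subseteq M_i$ is attainable), the sizes cofinal in $\ka$; put $X_i=X\cap M_i$. At the closed stages the neighbour bound is immediate: if $x\notin M_i$ but $\size{\calE^x\cap M_i}\ge\la$, choose $A'\in[\calE^x\cap M_i]^\la$; then $A'\in M_i$ by $\le\la$-closure, so $W(A')\in M_i$ with $\size{W(A')}<2^\la$, and since $2^\la\subseteq M_i$ we get $W(A')\subseteq M_i$, whence $x\in W(A')\subseteq M_i$, a contradiction.

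The main obstacle is exactly the limit stages, where continuity of the filtration (needed for Lemma \ref{0.4}) conflicts with the $\le\la$-closure that drives the neighbour bound: a limit model $M_i=\bigcup_{j<i}M_j$ need not be $\le\la$-closed, and a vertex $x\notin X_i$ may accumulate $\ge\la$ neighbours across the pieces $X_j$, $j<i$, while having $<\la$ in each. Here a direct cardinality count saves the situation \emph{when $\cf i<\cf\la$}: writing $\calE^x\cap X_i$ as an increasing union along a cofinal sequence of length $\cf i$ of sets of size $<\la$, regularity of $\la$ gives $\size{\calE^x\cap X_i}<\la$. Consequently the whole construction goes through verbatim whenever $\cf\ka\le\cf\la$ — in particular in the clean case $\cf\ka=\om$, where the chain has length $\om$ and no intermediate limit stage occurs at all.

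The remaining case, $\cf\ka>\cf\la$, is where the real work lies: the chain of length $\cf\ka$ unavoidably has limit indices $i$ with $\cf\la\le\cf i<\cf\ka$, at which neither the closure argument nor the counting argument applies, and re-closing at such stages would destroy continuity (and with it the colour-extension step in Lemma \ref{0.4}). I expect this to be the hard part, and I would attack it exactly in the spirit of Shelah's singular compactness (Fact \ref{2.10++}): rather than reading off the filtration from an arbitrary elementary chain, one arranges a ``free'' filtration along a scale adapted to the cofinality structure of $\ka$, using the club $C$ and the diamonds $\diamondsuit(S)$ granted by Fact \ref{sdiamond} on successors of points of $C$ to certify that the problematic limit stages can be organized so that each increment sees only $<\la$ old neighbours. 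Turning the few-neighbours lemma into such a genuinely continuous, neighbour-bounded filtration is the crux; once it is in place, Lemma \ref{0.4} finishes the proof immediately.
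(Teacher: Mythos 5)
Your reduction to Lemma \ref{0.4}, your ``few-neighbours lemma'' (which is exactly the Claim in the paper's proof, proved the same way from Fact \ref{0.3} and the local GCH), and your treatment of the case $\cf(\ka)\le\cf(\la)$ are all correct. But the proposal has a genuine, self-acknowledged gap: the case $\cf(\ka)>\cf(\la)$ (e.g.\ $\la=\om$ and $\cf(\ka)=\om_1$) is not proved. At a limit index $i$ with $\cf(\la)\le\cf(i)<\cf(\ka)$ neither the $\le\la$-closure argument nor the counting argument applies, and you only sketch a hope that a Shelah-style ``free'' filtration built from $\diamondsuit$ would work. That is the entire remaining content of the proposition, and no argument is supplied for it; as it stands the proof is incomplete.

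The paper closes exactly this gap with a concrete device that avoids $\le\la$-closure altogether: a matrix $\seq{M^\alpha_i\mid i<\cf(\ka),\ \alpha<\la^+}$ of elementary submodels with $\size{M^\alpha_i}=\ka_i\subseteq M^\alpha_i$, each row $\seq{M^\alpha_i\mid i<\cf(\ka)}$ increasing and continuous, unions taken at limit $\alpha$, and each row an element of the models in the next column. One sets $M_i=\bigcup_{\alpha<\la^+}M^\alpha_i$ and $X_i=X\cap M_i$; continuity in $i$ is automatic. Now at \emph{every} $i$, limit or not, if $x\notin M_i$ had $\ge\la$ neighbours in $X_i$, then since $X_i$ is the increasing union of the $\la^+$ many sets $X\cap M^\alpha_i$ and $\la^+>\la$ is regular, already some single $M^\alpha_i$ contains $\la$ of those neighbours; but $M^\alpha_i\in M_i$, so the set $W=\{y\in X\mid \size{\calE^y\cap M^\alpha_i}\ge\la\}$ lies in $M_i$, has size at most $\ka_i$ by the Claim, and $\ka_i\subseteq M_i$ then gives $W\subseteq M_i$, forcing $x\in M_i$ --- a contradiction. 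The $\la^+$-length union in the second coordinate does the work your closure hypothesis was doing, and it is present uniformly at all stages, so the limit-stage obstruction never arises. Adapting this matrix construction would be a much shorter route to finishing your argument than the singular-compactness machinery you propose.
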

\begin{proof}
Fix an increasing continuous sequence $\seq{\ka_i\mid i<\cf(\ka)}$ with
limit $\ka$ such that  $\cf(\ka)+\la<\ka_0$ and
$2^{\ka_i}=\ka_i^+$ for $i<\cf(\ka)$.

First we claim:
\begin{claim}
For every $i<\cf(\ka)$ and subset $Y$ of $X$ with size $\ka_i$,
the set $\{x \in X\mid \size{\calE^x \cap Y} \ge \la\}$ has cardinality at most $\ka_i$.
\end{claim}
\begin{proof}
Suppose to the contrary that the set $\{x \in X\mid \size{\calE^x \cap Y} \ge \la\}$ has cardinality $>\ka_i$.
There is $Z \in [X]^{\ka_i^+}$ such that $\size{\calE^x \cap Y} \ge \la$ for every $x \in Z$.
However then the induced subgraph $Y \cup Z$ has cardinality $<\ka$ but $\List(Y \cup Z)>\la$ by Fact \ref{0.3}
and the assumption that $2^{\ka_i}=\ka_i^+$,
this is a contradiction.
%
\end{proof}

Fix a sufficiently large regular cardinal $\theta$.
We can take a sequence $\langle M_i^\alpha\mid i<\cf(\ka), \alpha<\la^+\rangle$ such that:
\begin{enumerate}
\item $M_i^\alpha \prec \calH_\theta$, $\size{M_i^\alpha}=\ka_i \subseteq M_i^\alpha$, and
$M_i^\alpha$ contains all relevant objects.
\item For every $\alpha<\la^+$, $\seq{M_i^\alpha\mid i<\cf(\ka)}$ is $\subseteq$-increasing
and continuous.
\item For every $\alpha<\la^+$ and $i<\cf(\ka)$,
if $\alpha$ is limit then we have $M_i^\alpha=\bigcup_{\beta<\alpha} M_i^\beta$.
\item For every $\alpha<\la^+$,
$\seq{M_i^\alpha\mid i<\cf(\ka)} \in M_0^{\alpha+1}$.
\end{enumerate}
Note that for every $\alpha<\beta<\la^+$ and $i,j<\cf(\ka)$,
we have that $M_i^\alpha \in M_j^\beta$.

For $i<\cf(\ka)$, let $M_i=\bigcup_{\alpha<\la^+} M_i^\alpha$.
By the choice of the $M_i^\alpha$'s,
we know  that $\seq{M_i\mid i<\cf(\ka)}$ is $\subseteq$-increasing, continuous,
$\size{M_i}=\ka_i \subseteq M_i$, and $X \subseteq \bigcup_{i<\cf(\ka)} M_i$.
We also know that $M^\alpha_i \in M_i$ for every $\alpha<\la^+$ and $i<\cf(\ka)$.
Let $X_i=X \cap M_i$.
The sequence $\seq{X_i\mid i<\cf(\ka)}$ is a filtration of $X$.
Now we show that $\size{\calE^x \cap X_i} < \la$ for every $i<\cf(\ka)$ and $x \in X \setminus X_i$,
and then the assertion follows  from Lemma \ref{0.4}.
Suppose to the contrary that $\size{\calE^x \cap X_i} \ge \la$ for some $i<\cf(\ka)$ and $x \in X \setminus X_i$.
Since $X_i=\bigcup_{\alpha<\la^+} X \cap M_i^\alpha$,
there is some $\alpha<\la^+$ with
$\size{\calE^x \cap M_i^\alpha} \ge \la$.
We know $M_i^\alpha \in M_i$,
so $\{ y \in X \mid  \size{\calE^y \cap M^\alpha_i} \ge \la\} \in M_i$. 
By the claim,
the set $\{y \in X\mid  \size{\calE^y \cap M_i^\alpha} \ge \la \}$ has cardinality at most $\ka_i$.
Since $\ka_i \subseteq M_i$,
we have $\{y \in X\mid  \size{\calE^y \cap M_i^\alpha} \ge \la \} \subseteq M_i$.
Now $x \in \{y \in X\mid  \size{\calE^y \cap M_i^\alpha} \ge \la \}$, so $x \in M_i \cap X=X_i$,
this is a contradiction.
\end{proof}


\begin{question}
Can we weaken the assumption in Proposition \ref{1.3}?
For instance, does the conclusion of Proposition \ref{1.3} hold if $\ka$ is singular and $\mu^\la < \ka$ for every $\mu<\ka$,
or $\ka$ is singular and $2^\la<\ka$?
\end{question}

The following is a partial answer to this question.

\begin{prop}
Let $\ka$ be a  singular cardinal with countable cofinality, and $\la$ an infinite cardinal with $\la<\ka$.
Suppose $\mu^\la<\ka$ for every $\mu<\ka$.
For every graph $X$ of size $\ka$,
if $\List(Y) \le \la$ for every subgraph $Y$ of size $<\ka$,
then $\List(X) \le \la$.
\end{prop}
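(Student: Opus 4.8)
The plan is to mimic the proof of Proposition~\ref{1.3}, replacing the GCH-type hypothesis on a club with the weaker cardinal arithmetic assumption $\mu^\la<\ka$ for every $\mu<\ka$, and using the fact that $\cf(\ka)=\om$. The strategy is again to build a filtration $\seq{X_i\mid i<\om}$ of $X$ satisfying the hypotheses of Lemma~\ref{0.4}: that is, $\List(X_i)\le\la$ for each $i$ (automatic, since each $X_i$ has size $<\ka$), and $\size{\calE^x\cap X_i}<\la$ for every $x\in X\setminus X_i$. Once such a filtration is in hand, Lemma~\ref{0.4} delivers $\List(X)\le\la$ immediately, so the entire content of the proof lies in producing the filtration with the second, ``bounded-edge'' property.

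The key combinatorial input replacing the Claim in Proposition~\ref{1.3} is the following counting fact, which I would prove first. Fix an increasing sequence $\seq{\ka_i\mid i<\om}$ of cardinals cofinal in $\ka$ with $\la<\ka_0$. For any $Y\in[X]^{\le\ka_i}$, the set $B(Y)=\{x\in X\mid \size{\calE^x\cap Y}\ge\la\}$ has cardinality at most $\ka_i^\la$. The reason is a direct application of Fact~\ref{0.3}: if $\size{B(Y)}\ge 2^{\ka_i}\ge(\ka_i^\la)^+$ (say), then taking $Y_0$ of size $\ka_i$ inside $Y$ and $Y_1=B(Y)$ would exhibit a subgraph of size $<\ka$ with list-chromatic number $>\la$, contradicting the hypothesis. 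More carefully, since each $x\in B(Y)$ selects a set $\calE^x\cap Y\in[Y]^{\ge\la}$, and I only need one fixed subset of size $\la$ per vertex, the number of available ``types'' is at most $\size{[Y]^\la}=\ka_i^\la$; should $B(Y)$ exceed $\ka_i^\la$ in size, I would isolate $Y_0$ of size $\la$ and $Y_1\in[B(Y)]^{\ge 2^\la}$ with $\size{\calE^z\cap Y_0}\ge\la$ for all $z\in Y_1$ and invoke Fact~\ref{0.3}, contradicting $\List(Y_0\cup Y_1)\le\la$. The crucial point is that the hypothesis $\mu^\la<\ka$ guarantees $\ka_i^\la<\ka$, so $B(Y)$ always remains bounded below $\ka$.

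With this bound, I would construct the filtration by a closing-off recursion of length $\om$. Set $X_0$ to be any subset of size $<\ka$, and recursively let $X_{i+1}$ be a subset of $X$ of size $<\ka$ containing $X_i$ together with the set $B(X_i)=\{x\in X\mid\size{\calE^x\cap X_i}\ge\la\}$; this is legitimate because $\size{B(X_i)}\le\size{X_i}^\la<\ka$ by the counting fact and the arithmetic hypothesis. I would also arrange that $\bigcup_i X_i=X$ by interleaving an enumeration of $X$ into the construction. The defining feature of this closing-off is that any vertex $x$ with $\size{\calE^x\cap X_i}\ge\la$ has already been absorbed into $X_{i+1}$; hence if $x\in X\setminus X_{i+1}$ then $\size{\calE^x\cap X_i}<\la$. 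This is exactly the bounded-edge condition (up to reindexing) required by Lemma~\ref{0.4}.

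The \textbf{main obstacle} is the verification at limit stages, but since $\cf(\ka)=\om$ there are no limit stages below $\om$ in the filtration index, so the recursion is purely successor-driven and continuity is trivial; the only genuine subtlety is ensuring that the closing-off keeps sizes below $\ka$, which is precisely where $\mu^\la<\ka$ is indispensable and where the argument would break down under a merely ``$2^\la<\ka$'' hypothesis unless $\ka$ is a strong limit. I would double-check that a single vertex $x\notin X_i$ cannot accumulate $\ge\la$ neighbors in $X_i=\bigcup_{j<i}X_j$ without having been caught: since $i<\om$ is a successor or finite, $X_i$ is built from finitely many earlier closings-off, and the condition $\size{\calE^x\cap X_i}\ge\la$ at stage $i$ forces $x\in X_{i+1}$ by construction, so no uncaught vertex survives. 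This closes the proof via Lemma~\ref{0.4}.
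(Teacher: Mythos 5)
Your counting fact is correct and is essentially the same cardinality bound the paper extracts (via Fact \ref{0.3}) in its own proof. The gap is in the construction of the filtration. Lemma \ref{0.4} requires that for \emph{every} $x \in X \setminus X_i$ we have $\size{\calE^x \cap X_i} < \la$; in its proof, a vertex $x \in X_{i+1}\setminus X_i$ must be colored from $F(x)\setminus f``(\calE^x\cap X_i)$, so if $x$ has $\ge\la$ already-colored neighbors in $X_i$ its list of $\la$ colors may be exhausted. Your one-step closing-off $X_{i+1} \supseteq X_i \cup B(X_i)$ only yields the weaker statement that $x \notin X_{i+1}$ implies $\size{\calE^x \cap X_i} < \la$; the vertices of $B(X_i)\setminus X_i$ land in $X_{i+1}\setminus X_i$ and, by the very definition of $B(X_i)$, each has $\ge\la$ neighbors in $X_i$. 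So your filtration violates the hypothesis of Lemma \ref{0.4} at exactly the vertices you ``caught,'' and no reindexing repairs this: the obstruction is vertices inside $X_{i+1}\setminus X_i$, not outside $X_{i+1}$.

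The fix is to make each $X_n$ genuinely closed, i.e.\ $B(X_n) \subseteq X_n$. One way is to iterate your operator $Y \mapsto Y \cup B(Y)$ transfinitely, say $\la^+$ times, at each stage $n$ (taking unions at limits); any $x$ with $\la$ neighbors in the union acquires them all by some stage below $\la^+$ by regularity, and the size stays below $\ka$ because $(\mu^\la)^\la=\mu^\la<\ka$. The paper gets the closure for free by taking $X_n = M_n \cap X$ for elementary submodels $M_n \prec \calH_\theta$ with $\size{M_n}=\ka_n=\ka_n^\la \subseteq M_n$ and $[M_n]^\la \subseteq M_n$: if $x$ had $\la$ neighbors in $X_n$, a $\la$-sized set $a$ of them lies in $M_n$, the set $A=\{y \mid a \subseteq \calE^y\}$ belongs to $M_n$ and has size $<2^\la\le\ka_n$ by your same Fact \ref{0.3} argument, hence $A \subseteq M_n$ and $x \in M_n$, a contradiction. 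Either repair completes your argument; as written, the proof does not go through.
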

\begin{proof}
Fix an increasing sequence $\seq{\ka_n \mid n<\om}$ with limit $\ka$
such that $\ka_n^\la=\ka_n$. This is possible by our assumption.
Fix a sufficiently large regular cardinal $\theta$.
Take a $\subseteq$-increasing sequence of elementary submodels $\seq{M_n \mid n<\om}$ such that
$X \in M_n \prec \calH_\theta$, $\size{M_n}=\ka_n \subseteq M_n$,
and $[M_n]^\la \subseteq M_n$.
Let $X_n=M_n \cap X$. We know that $\seq{X_n \mid n<\om}$ is a filtration of $X$.
It is sufficient to show that for every $n<\om$ and $x \in X \setminus X_n$,
we have $\size{\calE^x \cap X_n} <\la$.
Suppose not. Take a set $a \subseteq \calE^x \cap X_n$ with size $\la$.
By the choice of $M_n$, we have $a \in M_n$.
The set $A=\{y \in X \mid a \subseteq \calE^y\}$ is in $M_n$,
and $x \in A$.
If the set $A$ has cardinality $\ge 2^\la$,
by Fact \ref{0.3} $X$ has a subgraph with size $<\ka$ but list-chromatic number $>\la$,
this is impossible.
Hence $\size{A} <2^\la \le \ka_n^\la=\ka_n$,
and $A \subseteq M_n$  since $\ka_n \subseteq M$.
Then $x \in A \subseteq M_n$, this is a contradiction.
\end{proof}

Erd\H os and Hajnal \cite{EH} showed that
if $\col(X)$ is uncountable,
then for every $n<\om$, $X$ contains a copy of $K_{n, \om_1}$ as a subgraph.
We prove a variant of their result under some additional assumptions.

\begin{define}[Shelah]\label{AP}
Let $\la$ be a cardinal.
$\mathrm{AP}_\la$ is the principle which asserts that
there is a sequence $\seq{c_\xi\mid \xi<\la^+}$ such that:
\begin{enumerate}
\item $c_\xi \subseteq \xi$.
\item There is a club $C$ in $\la^+$ such that
for every $\alpha \in C$,
\begin{enumerate}
\item $c_\alpha$ is unbounded in $\alpha$ and $\ot(c_\alpha)=\cf(\alpha)$.
\item $\{c_\alpha \cap \xi\mid \xi<\alpha\} \subseteq \{c_\xi\mid \xi<\alpha\}$.
\end{enumerate}
\end{enumerate}
\end{define}
See Eisworth \cite{Eisworth} about $\mathrm{AP}_\la$.
We point out that $\mathrm{AP}_\la$ follows from the weak square principle at $\la$.
%

\begin{prop}\label{1.8}
Let $X$ be a graph with $\col(X)>\om_1$.
Suppose $\la^\om=\la$ for every regular uncountable $\la <\size{X}$ (in particular $2^\om=\om_1$),
and $\mathrm{AP}_\la$ holds for every singular cardinal $\la<\size{X}$ of countable cofinality.
Then $X$ contains a subgraph which is isomorphic to $K_{\om, \om_1}$, in particular $X$
contains a subgraph of size $\om_1$ which has uncountable list-chromatic number.
\end{prop}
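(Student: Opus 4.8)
The plan is to reduce to a single minimal bad regular cardinal and then to build the complete bipartite graph inside a countably closed elementary submodel, using an external high-degree vertex as a \emph{guide}. First, applying Corollary~\ref{2.10+++} with $\la=\om_1$, I may pass to a subgraph $Y\subseteq X$ with $\ka:=\size{Y}$ regular uncountable, $\col(Y)>\om_1$, and $\col(Z)\le\om_1$ for every subgraph $Z$ of $Y$ of size $<\ka$; since $\ka\le\size{X}$, the arithmetic hypotheses remain available for all $\la<\ka$. It suffices to find a copy of $K_{\om,\om_1}$ in $Y$: the ``in particular'' clause is then immediate from Fact~\ref{2.2.4}, because the hypothesis yields $2^\om=\om_1$, so that $K_{\om,\om_1}=K_{\om,2^\om}$ has uncountable list-chromatic number.

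Next I would locate an external high-degree witness. Writing $Y=\seq{\ka,\calE}$, Fact~\ref{2.8+} (with $\la=\om_1$) tells me that $T=\{\alpha<\ka\mid \exists\beta\ge\alpha\,(\size{\calE^\beta\cap\alpha}\ge\om_1)\}$ is stationary in $\ka$. Using $\la^\om=\la$ for the relevant regular $\la<\ka$, and in particular $\om_1^\om=\om_1$, I fix a continuous $\in$-chain $\seq{M_n\mid n<\om}$ of elementary submodels of some $\calH_\theta$, each of size $\om_1$, containing all relevant parameters, closed under countable sequences, with $\om_1\subseteq M_0$, and arrange that $\delta=\sup_n(M_n\cap\ka)\in T$ with $\cf(\delta)=\om$; let $M=\bigcup_n M_n$, so ${}^\om M\subseteq M$ and $\om_1\subseteq M$. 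Fixing $\beta\ge\delta$ with $\size{\calE^\beta\cap\delta}=\om_1$, the vertex $\beta$ lies outside $M$ but is joined to an uncountable set $\calE^\beta\cap M\subseteq M\cap\ka$ of ``internal'' vertices; this external $\beta$ is the guide that should keep the common neighborhood from collapsing.

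Then I would build the $\om$-side by recursion. Maintaining sets $B_n\in M$ of size $\om_1$ with $\size{\calE^\beta\cap B_n}=\om_1$ and distinct vertices $a_0,\dots,a_{n-1}\in M$ with $B_n\subseteq\bigcap_{i<n}\calE^{a_i}$, I reflect into $M$ the statement ``some vertex meets $B_n$ in $\om_1$ points'' (true in $V$, witnessed by $\beta$) to obtain $a_n\in M$ with $\size{\calE^{a_n}\cap B_n}=\om_1$, and set $B_{n+1}=\calE^{a_n}\cap B_n\in M$; the guide $\beta$ is used to align these choices so that $\size{\calE^\beta\cap B_{n+1}}=\om_1$ persists. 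At the limit, countable closure gives $\seq{a_n\mid n<\om}\in M$, hence $\bigcap_{n}\calE^{a_n}\in M$, and the point of the construction is that this intersection has size $\om_1$, yielding $K_{\om,\om_1}$ on the $\om$-side $\{a_n\mid n<\om\}$ with that uncountable common neighborhood. When the cardinal $\size{\delta}$, or an intermediate cardinal crossed while choosing the $a_n$, is singular of countable cofinality, the naive alignment against $\beta$ can fail; here I would invoke $\mathrm{AP}_\la$, using the sequence $\seq{c_\xi\mid\xi<\la^+}$ of Definition~\ref{AP} to thread a coherent cofinal scale through $\delta$ and organize the neighborhoods $\calE^{a_n}\cap\delta$ so that an uncountable core survives.

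The hard part will be precisely this limit stage: a decreasing $\om$-sequence of uncountable sets can have countable intersection, so merely reflecting high-degree vertices into $M$ (which only reproves the Erd\H{o}s--Hajnal $K_{n,\om_1}$ for each finite $n$) is not enough. The two arithmetic hypotheses are exactly what rescue the limit: $\om_1^\om=\om_1$ lets me work inside a countably closed $M$ that captures the sequence $\seq{a_n\mid n<\om}$ together with its common neighborhood, while $\mathrm{AP}_\la$ supplies the coherence needed at singular cardinals of cofinality $\om$ to keep that neighborhood uncountable. Verifying that the alignment against $\beta$ can be carried through all $\om$ steps, and in particular across such singular $\la$, is the crux of the argument.
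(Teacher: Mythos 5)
Your reduction to a regular $\ka$ via Corollary \ref{2.10+++}, the use of Fact \ref{2.8+} to get a stationary set of ordinals below which some vertex has uncountable degree, and the derivation of the ``in particular'' clause from Fact \ref{2.2.4} under $2^\om=\om_1$ are all correct and match the paper. The gap is in the main construction. You build the countable side of $K_{\om,\om_1}$ by recursion, choosing $a_n\in M$ and shrinking $B_{n+1}=\calE^{a_n}\cap B_n$, and you need $\bigcap_n\calE^{a_n}$ to stay uncountable at the limit. As you yourself note, a decreasing $\om$-sequence of uncountable sets can have countable intersection, and nothing in your proposal rules this out: maintaining $\size{\calE^\beta\cap B_n}=\om_1$ for every $n$ does not imply that $\calE^\beta\cap\bigcap_n B_n$, or even $\bigcap_n B_n$, is uncountable, and countable closure of $M$ only tells you that the (possibly countable) intersection is an element of $M$. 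The appeals to ``alignment against the guide $\beta$'' and to $\mathrm{AP}_\la$ ``threading a coherent cofinal scale'' name the difficulty without resolving it, so what you actually obtain is only the Erd\H{o}s--Hajnal conclusion $K_{n,\om_1}\subseteq X$ for each finite $n$, not $K_{\om,\om_1}$.

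The paper's proof avoids the limit problem entirely by reversing the roles of the two sides. Fix $M\prec\calH_\theta$ with $M\cap\ka\in S$ and an external $\beta_0\ge M\cap\ka$ whose neighborhood meets $M\cap\ka$ uncountably. The countable side is taken to be a single countable set $Y_0\subseteq\calE^{\beta_0}$ that is an \emph{element} of $M$; once such a $Y_0\in M$ is found, elementarity (applied to the statement ``there is $\beta$ above any given bound with $Y_0\subseteq\calE^\beta$'', which is true in $V$ as witnessed by $\beta_0$) yields unboundedly many, hence $\om_1$ many, vertices adjacent to every point of $Y_0$, and $K_{\om,\om_1}$ appears in one step with no recursion and no shrinking intersections. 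The entire role of your two arithmetic hypotheses is to capture $Y_0$ inside $M$: when $\ka$ is not the successor of a singular cardinal of countable cofinality, one finds $\gamma<M\cap\ka$ with $\calE^{\beta_0}\cap\gamma$ infinite and uses $\gamma^\om<\ka$ to conclude $[\gamma]^\om\subseteq M$; when $\ka=\la^+$ with $\cf(\la)=\om$ (where $\la^\om\ge\ka$ and this capture fails), $\mathrm{AP}_\la$ is used to write $M\cap\ka$ as an increasing union of sets $B_\gamma\in M$ of size $<\la$, one of which meets $\calE^{\beta_0}$ in an infinite set, whence $[B_\delta]^\om\subseteq M$ and $Y_0$ can be chosen there. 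If you want to salvage your write-up, replace the recursion by this single-step reflection argument.
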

\begin{proof}
Choose a subgraph $Y$ of $X$ such that
$\size{Y}$ is regular uncountable, $\col(Y)>\om_1$, and 
$\col(Z)  \le \om_1$ for every subgraph $Z$ of $Y$ with size $<\size{Y}$.
Let $\ka=\size{Y}$. 
$\ka$ is strictly greater than $\om_1$.
We may assume $Y=\seq{\ka, \calE}$.
Since $\col(Y)>\om_1$,
the set $S=\{\alpha<\ka\mid \exists \beta \ge \alpha\,(\size{\calE^\beta \cap \alpha} >\om)\}$
is stationary in $\ka$ by Fact \ref{2.8+}.
Fix a sufficiently large regular cardinal $\theta$.

Case 1: $\ka$ is  not the successor of a singular cardinal of countable cofinality.
Note that $\gamma^\om<\ka$ for every $\gamma<\ka$ in this case.

Take $M \prec \calH_\theta$ which contains all relevant objects and
$M \cap \ka \in S$.
Fix $\beta_0 \ge M \cap \ka$ such that 
$\calE^{\beta_0} \cap (M \cap \ka)$ is uncountable.
Then there is $\gamma< M \cap \ka$ such that
$\calE^{\beta_0} \cap \gamma$ is infinite.
We know $\gamma^\om<\ka$, hence $\gamma^\om <M \cap \ka$ and 
$[\gamma]^\om \subseteq M$.
Take $Y_0 \in [\gamma]^\om$ such that
$Y_0 \subseteq \calE^{\beta_0}$.
By the elementarity of $M$,
the set $\{\beta<\ka\mid Y_0 \subseteq \calE^\beta\}$ 
is unbounded in $\ka$.
Hence we can find $Z \in [\ka]^{\om_1}$ such that
$Y_0 \cap Z=\emptyset$ and $Y_0 \subseteq \calE^\beta$ for every $\beta \in Z$.
Then the induced subgraph $Y_0 \cup Z$ contains a copy of $K_{\om, \om_1}$.

Case 2: $\ka$ is the successor of a singular cardinal of countable cofinality,
say $\ka=\la^+$ with $\cf(\la)=\om$.
We have that $\mathrm{AP}_\la$ holds and $\gamma^\om<\la$ for every $\gamma<\la$.

Take $M \prec \calH_\theta$ which contains all relevant objects
and $M \cap \ka \in S$.
Take a sequence $\seq{c_\xi\mid \xi<\ka} \in M$ witnessing $\mathrm{AP}_\la$.
Let $\alpha =M \cap \ka$.
Then $\sup(c_\alpha)=\alpha$ and $\ot(c_\alpha)<\la$.
We also know $c_\alpha \cap \gamma \in M$ for every $\gamma<\alpha$,
because $\{c_\alpha \cap \gamma \mid \gamma<\alpha\} \subseteq \{c_\gamma \mid \gamma<\alpha\} \subseteq M$.
Take a sequence $\seq{\pi_\xi\mid \xi<\ka} \in M$ such that
each $\pi_\xi$ is a surjection from $\la$ to $\xi$.
Take also an increasing sequence $\seq{\la_n\mid n<\om}\in M$ with limit $\la$.
For $n<\om$, let $A_n=\bigcup\{\pi_\xi``\la_n\mid \xi \in c_\alpha\}$.
We have that $\size{A_n}<\la$ and $\bigcup_{n<\om} A_n=\alpha$.
Fix $\beta_0 \ge \alpha$ with $\size{\calE^{\beta_0} \cap \alpha}>\om$.
Then there is $n_0<\om$
such that $A_{n_0} \cap \calE^{\beta_0}$ is uncountable.
For $\gamma<\alpha$,
let $B_\gamma=\bigcup \{\pi_\xi``\la_{n_0} \mid \xi \in c_\alpha \cap \gamma\}$.
The sequence $\seq{B_\gamma\mid \gamma<\alpha}$ is $\subseteq$-increasing
and $\bigcup_{\gamma<\alpha} B_\gamma=A_{n_0}$.
Thus there is some $\delta<\alpha$ such that
$\calE^{\beta_0} \cap B_\delta$ is infinite.
Since $c_\alpha \cap \delta \in M$,
we have that $B_\delta \in M$.
$\size{B_\delta}<\la$, so we have that $[B_\delta]^\om \subseteq M$, and 
there is $Y_1 \in [B_\delta]^\om$
such that $Y_1 \in M$ and $Y_1 \subseteq \calE^{\beta_0}$.
The rest is the same as Case 1.
\end{proof}

\begin{remark}
Under the assumption of Proposition \ref{1.8},
a graph $X$ with $\col(X)>\om_1$ actually contains a copy of $K_{\om, \om_2}$.
However we do not use this result in this paper.
\end{remark}

In the proof of the previous proposition,
we used the cardinal arithmetic assumption and the principle $\mathrm{AP}_\la$.

\begin{question}
Are the assumptions in Proposition \ref{1.8} necessary?
\end{question}

\section{Reflections for the list-chromatic number and the coloring number}\label{sec4}
In this section, we consider  the reflection principles $\Refl(\List)$ and $\Refl(\col)$,
and, we  prove some results about reflections.
First we prove that $\Refl(\List)$ implies the Continuum Hypothesis.

\begin{lemma}\label{CH}
$\Refl(\List, 2^\om)$ implies that
$2^\om=\om_1$.
\end{lemma}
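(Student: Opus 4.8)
The plan is to argue by contradiction, using the complete bipartite graph $K_{\om,2^\om}$ as the graph that refuses to reflect once the continuum is large. Suppose toward a contradiction that $2^\om>\om_1$, so that $\om_1<2^\om$. I would feed the single graph $X=K_{\om,2^\om}$ into the hypothesis $\Refl(\List,2^\om)$; note that $\size{X}=2^\om$, so $X$ is eligible.

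The first step is to observe that $X$ has uncountable list-chromatic number. This is exactly Fact \ref{2.2.4}, so $\List(X)>\om$. Applying $\Refl(\List,2^\om)$ to $X$ then yields a subgraph $Y\subseteq X$ with $\size{Y}=\om_1$ and $\List(Y)>\om$. The second step is to notice that, under our assumption $\om_1<2^\om$, this subgraph $Y$ has size $\om_1<2^\om$, and so it is a subgraph of $K_{\om,2^\om}$ of size strictly below $2^\om$. By Fact \ref{2.2.5}, every such subgraph has countable list-chromatic number, giving $\List(Y)\le\om$. This contradicts $\List(Y)>\om$, so the assumption $2^\om>\om_1$ is untenable and $2^\om=\om_1$.

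There is essentially no technical obstacle here: the two facts of Komj\'ath (\ref{2.2.4} and \ref{2.2.5}) do all the combinatorial work, and the reflection principle is used once, with the obvious witness graph. The only point requiring care is bookkeeping about sizes, namely that the reflecting subgraph $Y$ is guaranteed to have size exactly $\om_1$, which lands strictly below $2^\om$ precisely when $2^\om\neq\om_1$; this is what makes Fact \ref{2.2.5} applicable and closes the argument. Thus the crux is simply recognizing that $K_{\om,2^\om}$, together with the sharp contrast between Facts \ref{2.2.4} and \ref{2.2.5}, is a self-contained obstruction to uncountable compactness for the list-chromatic number whenever the continuum exceeds $\om_1$.
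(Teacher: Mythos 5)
Your proof is correct and is essentially identical to the paper's own argument: both feed $K_{\om,2^\om}$ into $\Refl(\List,2^\om)$ and derive a contradiction from Facts \ref{2.2.4} and \ref{2.2.5} under the assumption $2^\om>\om_1$. No issues.
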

\begin{proof}
The complete bipartite graph $K_{\om,2^\om}$
has uncountable list-chromatic number by Fact \ref{2.2.4}.
However, if $2^\om>\om_1$, then every subgraph of $K_{\om,2^\om}$ of size $\om_1$
has countable list-chromatic number by Fact \ref{2.2.5}.
This contradicts the principle $\Refl(\List, 2^\om)$.
\end{proof}
$\FRP$ and $\Refl(\col)$ follow from Martin's Maximum (\cite{Fetal}).
However 
this lemma shows that $\Refl(\List)$ does not follow from Martin's Maximum and other forcing axioms which imply  $2^\om>\om_1$.

\begin{question}
Does $\Refl(\List, \om_2)$ imply $2^\om=\om_1$?
\end{question}

Now we have the following consistency result, which is (2) of Theorem \ref{thm2}.

\begin{cor}
It is consistent that
$\Refl(\col)$ holds but $\Refl(\List, \om_2)$ fails.
\end{cor}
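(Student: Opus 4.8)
The plan is to produce, from a supercompact cardinal, a model in which $\Refl(\col)$ holds together with $2^\om = \om_2$. Once this is achieved the failure of $\Refl(\List, \om_2)$ is automatic: since $2^\om = \om_2$ we have $\Refl(\List, \om_2) = \Refl(\List, 2^\om)$, and by Lemma \ref{CH} the latter would force $2^\om = \om_1$, contradicting $2^\om = \om_2 > \om_1$. Thus the entire task reduces to arranging $\Refl(\col)$ simultaneously with $2^\om = \om_2$.

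First I would secure $\Refl(\col)$. Starting from a supercompact $\ka$ in a ground model of GCH, force with the Levy collapse $\coll(\om_1, <\ka)$. By Fact \ref{basic FRP}~(3) this forces $\FRP$, hence $\Refl(\col)$ by Fact \ref{basic FRP}~(1). In the resulting model $V[G]$ we have $\ka = \om_2$; moreover, since $\coll(\om_1, <\ka)$ is $\om_1$-closed it adds no reals, so CH persists, and a standard nice-name count together with GCH in $V$ shows $V[G] \models \text{GCH}$. In particular $\om_2^{\om} = \om_2$ holds in $V[G]$.

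Next I would blow up the continuum while keeping $\Refl(\col)$. Over $V[G]$, force with the c.c.c.\ poset $\mathrm{Fn}(\om_2 \times \om, 2, <\om)$, adding $\om_2$ Cohen reals. Because $\om_2^{\om} = \om_2$ in $V[G]$, the extension satisfies $2^\om = \om_2$ exactly. Crucially this forcing is c.c.c., so by Fact \ref{basic FRP}~(2) it preserves $\FRP$, whence $\Refl(\col)$ continues to hold. Thus the final model satisfies $\Refl(\col)$ and $2^\om = \om_2$, and by the observation of the first paragraph $\Refl(\List, \om_2)$ fails there.

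The conceptual content is carried entirely by the c.c.c.\ preservation of $\FRP$ in Fact \ref{basic FRP}~(2); the only delicate points are bookkeeping ones, namely verifying that the Levy collapse preserves GCH and that the Cohen extension yields $2^\om = \om_2$ rather than something larger, both of which are routine cardinal-arithmetic computations. Alternatively, one may bypass the two-step forcing and simply invoke Martin's Maximum, which is consistent relative to a supercompact and implies both $\Refl(\col)$ and $2^\om = \om_2$; applying Lemma \ref{CH} as above then immediately yields the failure of $\Refl(\List, \om_2)$.
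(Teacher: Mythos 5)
Your proposal is correct and follows essentially the same route as the paper: Levy-collapse a supercompact to $\om_2$ to obtain $\FRP$ (hence $\Refl(\col)$), then add $\om_2$ Cohen reals by c.c.c.\ forcing, which preserves $\FRP$ while making $2^\om=\om_2$, so that Lemma \ref{CH} kills $\Refl(\List,\om_2)$. The extra cardinal-arithmetic bookkeeping and the Martin's Maximum alternative are fine but not needed beyond what the paper already does.
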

\begin{proof}
Let $\ka$ be a supercompact cardinal,
and take a $(V, \coll(\om_1, <\ka))$-generic $G$.
In $V[G]$, $\FRP$ holds by Fact \ref{basic FRP}.
Now add $\om_2$ many Cohen (or any other) reals by c.c.c. forcing.
$\FRP$ is preserved by c.c.c. forcing,
so $\FRP$ still holds in the extension,
and we have $\Refl(\col)$. 
On the other hand, since $2^\om=\om_2$ in the extension,
we have that $\Refl(\List, \om_2)$ fails.
\end{proof}

On the other hand, by using Corollary \ref{3.4.1},  we have the following implication between $\Refl(\col)$ and $\Refl(\List)$.
\begin{cor}\label{4.5++}
Suppose that $\diamondsuit(S)$ holds for every stationary $S \subseteq \om_1$.
If $\Refl(\col)$ holds, then $\Refl(\List)$ holds as well.
\end{cor}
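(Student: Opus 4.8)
The plan is to chain together three facts: the basic inequality $\List(X) \le \col(X)$, the hypothesis $\Refl(\col)$, and Corollary \ref{3.4.1} (which is where the diamond assumption does all the real work). Fix an arbitrary cardinal $\la$; it suffices to verify $\Refl(\List, \la)$. So let $X$ be a graph of size $\le \la$ with $\List(X)>\om$, and the goal is to produce a subgraph $Y$ of size $\om_1$ with $\List(Y)>\om$.

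First I would observe that $\col(X)>\om$. This is immediate from the general inequality $\List(X) \le \col(X)$ recalled in the introduction: since $\List(X)>\om$, we must have $\col(X)>\om$ as well. This is the step that lets us pass from the list-chromatic hypothesis to a statement about the coloring number, to which $\Refl(\col)$ applies.

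Next I would apply $\Refl(\col)$ (more precisely $\Refl(\col,\la)$, which follows from the global principle) to the graph $X$. Since $\col(X)>\om$ and $\size{X} \le \la$, this yields a subgraph $Y$ of $X$ with $\size{Y}=\om_1$ and $\col(Y)>\om$. Finally, because $Y$ has size $\om_1$ and uncountable coloring number, Corollary \ref{3.4.1} — which uses precisely the assumption that $\diamondsuit(S)$ holds for every stationary $S \subseteq \om_1$ — gives $\List(Y)>\om$. Thus $Y$ is a subgraph of $X$ of size $\om_1$ witnessing $\List(Y)>\om$, establishing $\Refl(\List,\la)$. As $\la$ was arbitrary, $\Refl(\List)$ follows.

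There is no genuine obstacle here: once the ingredients are in place the argument is a short implication chain, and the only nonroutine content has already been isolated in Corollary \ref{3.4.1}. The one point to state carefully is that the subgraph $Y$ furnished by $\Refl(\col)$ automatically has the size and coloring-number properties needed to feed into Corollary \ref{3.4.1}, and that $Y$ being a subgraph of $X$ is preserved throughout, so the reflected witness for the list-chromatic number lives inside the original graph $X$ as required.
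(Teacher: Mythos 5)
Your proposal is correct and is essentially the paper's own argument: pass from $\List(X)>\om$ to $\col(X)>\om$ via $\List(X)\le\col(X)$, apply $\Refl(\col)$ to get a subgraph $Y$ of size $\om_1$ with uncountable coloring number, and conclude $\List(Y)>\om$ from Corollary \ref{3.4.1}. No further comment is needed.
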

\begin{proof}
Let $X$ be a graph with $\List(X)>\om$.
Then $\col(X) \ge \List(X)>\om$,
hence $X$ has a subgraph $Y$ of size $\om_1$ with uncountable coloring number.
Now we have $\List(Y)>\om$ by Corollary \ref{3.4.1}.
\end{proof}

\begin{cor}[\cite{FS}]
Suppose $\ka$ is supercompact.
Then $\coll(\om_1, <\ka)$ forces $\Refl(\List)$.
\end{cor}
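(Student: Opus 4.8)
The plan is to combine the reflection for the coloring number that is already available after the collapse with a diamond hypothesis, and then to invoke Corollary \ref{4.5++}. Let $G$ be $(V,\coll(\om_1,<\ka))$-generic. Since $\ka$ is inaccessible (being supercompact), $\coll(\om_1,<\ka)$ is $\sigma$-Baire, satisfies the $\ka$-c.c., collapses every cardinal below $\ka$ to $\om_1$, and makes $\ka=\om_2$. First, by Fact \ref{basic FRP}(3), $\FRP$ holds in $V[G]$, hence so does $\Refl(\col)$. By Corollary \ref{4.5++} it therefore suffices to verify that in $V[G]$, $\diamondsuit(S)$ holds for every stationary $S\subseteq\om_1$; granting this, $\Refl(\List)$ follows at once.

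The bulk of the work is thus the diamond part. As $\coll(\om_1,<\ka)$ is $\sigma$-closed it adds no new reals, and since $(2^\om)^V<\ka$ is collapsed to $\om_1$ we obtain $2^\om=\om_1$ in $V[G]$; but CH by itself is insufficient, so I would establish $\diamondsuit(S)$ directly. The key reduction is that every stationary $S\subseteq\om_1$ of $V[G]$ already lies in an intermediate model. Writing $G_\mu=G\cap\coll(\om_1,<\mu)$, a nice name for $S$ uses, by the $\ka$-c.c., antichains of size $<\ka$; since $\ka$ is regular and $>\om_1$, the union of the $\om_1$-many conditions occurring in the name has its domain bounded below $\ka$, so $S\in V[G_\mu]$ for some $\mu<\ka$. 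By the factor lemma and the inaccessibility of $\ka$, the tail forcing over $V[G_\mu]$ is again a L\'evy collapse that is $\sigma$-closed and collapses $\ka$ to $\om_2$; in particular it preserves the stationarity of $S$.

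It then remains to show that, over any ground model, such a collapse forces $\diamondsuit(S)$ for a \emph{fixed} stationary $S\subseteq\om_1$. This is the classical fact that a sufficiently rich $\sigma$-closed collapse adds a $\diamondsuit(S)$-sequence: one reads a candidate sequence $\seq{d_\alpha\mid\alpha\in S}$ off the generic collapsing functions, and, given names $\dot A$ for a subset of $\om_1$ and $\dot C$ for a club, one uses $\sigma$-closure to build a descending chain of conditions that decides $\dot A\cap\alpha$ and forces $\alpha\in\dot C$ for some $\alpha\in S$, while retaining enough freedom in the generic at level $\alpha$ to code $d_\alpha=\dot A\cap\alpha$. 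This shows the relevant dense sets are met, so $\{\alpha\in S\mid d_\alpha=A\cap\alpha\}$ is stationary for every $A\subseteq\om_1$ of the extension. Combining the three steps gives $\diamondsuit(S)$ for every stationary $S\subseteq\om_1$ in $V[G]$, and the corollary then follows from Corollary \ref{4.5++}.

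The main obstacle I anticipate is the treatment of \emph{new} stationary sets $S$: a single global guessing sequence cannot yield $\diamondsuit(S)$ for all stationary $S$ simultaneously, and the density argument is clean only when $S$ is fixed in the ground model. Hence the absorption step — locating $S$ in some $V[G_\mu]$ and recognizing the tail as another $\sigma$-closed collapse — is exactly what makes the argument go through, and it is here that inaccessibility and the $\ka$-c.c. are essential.
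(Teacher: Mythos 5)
Your proof is correct and follows essentially the same route as the paper: obtain $\Refl(\col)$ from $\FRP$ via Fact \ref{basic FRP}(3), verify that $\diamondsuit(S)$ holds for every stationary $S\subseteq\om_1$ in the collapse extension, and conclude by Corollary \ref{4.5++}. The only difference is that the paper simply cites the diamond fact as known, whereas you open that black box with the standard absorption-plus-density argument (locating $S$ in an intermediate model and viewing the tail as another $\sigma$-closed collapse), which is a correct expansion of the same step.
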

\begin{proof}
In $V^{\coll(\om_1, <\ka)}$, $\FRP$ holds and it is known that $\diamondsuit(S)$ holds for all stationary subsets $S$ in $\om_1$.
Then $\Refl(\List)$ holds as well by the previous corollary.
\end{proof}

Next we turn to the consistency strength of $\Refl(\List)$.

\begin{prop}\label{cons of list}
Suppose $\Refl(\List)$.
Then for every cardinal $\la \ge \om_1$ of uncountable cofinality,
either:
\begin{enumerate}
\item $2^\la>\la^+$, or
\item Every stationary subset of $\la^+ \cap \Cof(\om)$ is reflecting.
\end{enumerate}
\end{prop}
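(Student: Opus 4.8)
The plan is to argue by contraposition on (1): assuming $\Refl(\List)$ and fixing $\la \ge \om_1$ of uncountable cofinality, I suppose that (1) fails, so that $2^\la = \la^+$ (recall $\la^+ \le 2^\la$ always), and then derive (2). Suppose toward a contradiction that (2) fails, i.e., there is a non-reflecting stationary set $S \subseteq \la^+ \cap \Cof(\om)$. I will produce from $S$ a graph $X$ of size $\la^+$ with $\List(X)>\om$ all of whose small subgraphs have countable list-chromatic number, and this will contradict $\Refl(\List, \la^+)$.

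First I would invoke Fact \ref{non-ref ladder} applied to the regular cardinal $\la^+$ and the non-reflecting stationary set $S$, obtaining a graph $X$ of size $\la^+$ with $\col(X)>\om$ but $\col(Y) \le \om$ for every subgraph $Y$ of size $<\la^+$. Reindexing the vertices by a bijection with $\la^+$ (which affects none of the relevant quantities, as $\col$ and $\List$ are isomorphism invariants), I may assume $X=\seq{\la^+, \calE}$.

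The heart of the argument is to upgrade $\col(X)>\om$ to $\List(X)>\om$ using Proposition \ref{2.7+} with $\ka=\la^+$ and cardinal parameter $\om$. For this I need $\diamondsuit(S')$ for every stationary $S' \subseteq \la^+ \cap \Cof(\cf(\om)) = \la^+ \cap \Cof(\om)$. This is exactly where the hypotheses $2^\la=\la^+$ and $\cf(\la)>\om$ enter: by Shelah's diamond (Fact \ref{sdiamond}) applied to $\la$, the relation $2^\la=\la^+$ gives $\diamondsuit(S')$ for every stationary $S' \subseteq \la^+ \setminus \Cof(\cf(\la))$; and since $\cf(\la)$ is uncountable, $\Cof(\om)$ and $\Cof(\cf(\la))$ are disjoint, so $\la^+ \cap \Cof(\om) \subseteq \la^+ \setminus \Cof(\cf(\la))$ and the needed diamonds are available. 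Proposition \ref{2.7+} then yields $\List(X)>\om$.

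Finally, since $\la \ge \om_1$ we have $\la^+ \ge \om_2 > \om_1$, so $\size{X}=\la^+$ and $\Refl(\List, \la^+)$ applies: there is a subgraph $Y$ of $X$ with $\size{Y}=\om_1$ and $\List(Y)>\om$. But $\om_1 < \la^+$, so $Y$ is a subgraph of size $<\la^+$, whence $\col(Y) \le \om$ and therefore $\List(Y) \le \col(Y) \le \om$, a contradiction. This completes the contrapositive, and hence the proof. The main obstacle --- really the only delicate point --- is lining up the diamond principles: one must check that the cofinality on which Proposition \ref{2.7+} demands $\diamondsuit$ (namely $\Cof(\om)$) avoids the single cofinality $\Cof(\cf(\la))$ excluded by Shelah's theorem, which is precisely guaranteed by the assumption that $\la$ has uncountable cofinality.
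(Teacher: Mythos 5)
Your proof is correct and follows essentially the same route as the paper's: negate (1) to get $2^\la=\la^+$, apply Fact \ref{non-ref ladder} to a hypothetical non-reflecting stationary subset of $\la^+\cap\Cof(\om)$, upgrade $\col(X)>\om$ to $\List(X)>\om$ via Proposition \ref{2.7+} using the diamonds supplied by Fact \ref{sdiamond}, and contradict $\Refl(\List,\la^+)$. The only difference is that you make explicit the check (left implicit in the paper) that $\cf(\la)>\om$ is exactly what lets Shelah's diamond cover the cofinality-$\om$ stationary sets needed by Proposition \ref{2.7+}.
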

\begin{proof}
Let $\la \ge \om_1$ be a cardinal of uncountable cofinality.
If $2^\la=\la^+$,
then 
$\diamondsuit(S)$ holds for every stationary $S \subseteq \la^+ \cap \Cof(\om)$
by Fact \ref{sdiamond}.
If $\la^+ \cap  \Cof(\om)$ has a non-reflecting stationary subset,
by Fact \ref{non-ref ladder} there is a graph $X$ of size $\la^+$ such that
$\col(X) >\om$ but $\col(Y) \le \om$ for every subgraph $Y$ of size $<\la^+$.
Then $\List(X)>\om$ but $\List(Y) \le \om$ for every $Y \in [X]^{<\la^+}$ by
Proposition \ref{2.7+}, this is a contradiction.
\end{proof}

This proposition means that 
the global reflection $\Refl(\List)$ has a large cardinal strength;
The singular cardinal hypothesis fails, or
$\square_\la$ fails at every singular cardinal $\la$ of uncountable cofinality.

We will use the following proposition later.
\begin{prop}\label{1.7}
Suppose $\diamondsuit(S)$ holds for every stationary $S \subseteq \om_1$.
Let $\ka \ge \om_2$ be regular and
suppose $\FRP(\ka)$ holds.
Then for every graph $X$ of size $\ka$, if
$\List(Y) \le \om$ for every subgraph $Y$ of size $<\ka$,
then $\List(X) \le \om$.
\end{prop}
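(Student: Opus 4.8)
The plan is to build a good list-coloring level-by-level along a filtration of $X$ via Lemma \ref{0.4}, after using the hypotheses to arrange that each new vertex has only \emph{finitely} many neighbors in the part already colored. First I would fix an $\om$-assignment $F$ and assume $X=\seq{\ka,\calE}$. Since the diamond hypothesis implies $\mathrm{CH}$, I would record the structural consequence $(\star)$: for every countable $Y_0\subseteq X$ the set $N(Y_0)=\{z\in X : \size{\calE^z\cap Y_0}\ge\om\}$ is countable. Indeed, if $\size{N(Y_0)}\ge\om_1=2^\om$, then choosing $Y_1\subseteq N(Y_0)$ of size $\om_1$ disjoint from $Y_0$, Fact \ref{0.3} (with $\la=\mu=\om$) gives $\List(Y_0\cup Y_1)>\om$, and $\size{Y_0\cup Y_1}=\om_1<\ka$ contradicts the assumption that every subgraph of size $<\ka$ has countable list-chromatic number.

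Next I would take a continuous $\subseteq$-increasing chain $\seq{M_\alpha:\alpha<\ka}$ of elementary submodels of some $\calH_\theta$, each of size $\om_1$, countably closed at every stage of uncountable cofinality, containing all relevant parameters, with $\delta_\alpha=M_\alpha\cap\ka\in\ka$; put $X_\alpha=X\cap M_\alpha=\delta_\alpha$. This is a filtration of $X$ with $\size{X_\alpha}=\om_1<\ka$, so $\List(X_\alpha)\le\om$ by hypothesis. The point of $(\star)$ together with countable closure is that at any level $\delta_\alpha$ of uncountable cofinality every $x\ge\delta_\alpha$ has $\calE^x\cap X_\alpha$ finite: a countable $Y_0\subseteq\calE^x\cap X_\alpha$ would be bounded below $\delta_\alpha$, hence lie in $M_\alpha$, and then $N(Y_0)\ni x$ would be a countable element of $M_\alpha$, forcing $x\in M_\alpha$, a contradiction. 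The only problematic levels are those of countable cofinality, so I would isolate $E=\{\delta_\alpha:\cf(\delta_\alpha)=\om\text{ and }\exists x\ge\delta_\alpha\ \size{\calE^x\cap\delta_\alpha}\ge\om\}\subseteq\ka\cap\Cof(\om)$.

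The hard part is to show that $E$ is non-stationary, and this is exactly where $\FRP(\ka)$ enters. Assuming $E$ stationary, I would pick for each $\delta\in E$ a vertex $x_\delta\ge\delta$ with $\size{\calE^{x_\delta}\cap\delta}\ge\om$ and a countable $g(\delta)\in[\delta]^\om$ with $g(\delta)\subseteq\calE^{x_\delta}\cap\delta$, and apply $\FRP(\ka)$ to $E$ and $g$ to obtain $I\in[\ka]^{\om_1}$ with $\{y\in[I]^\om:\sup y\in E,\ g(\sup y)\subseteq y\}$ stationary in $[I]^\om$. Setting $W=I\cup\{x_\delta:\delta=\sup y\text{ for some such }y\}$ (of size $\om_1$), the reflected ladder structure forces $\col(W)>\om$: this is the standard pressing-down computation in $[I]^\om$ underlying $\FRP\Rightarrow\Refl(\col)$ (Corollary \ref{frp to col}), carried out through the characterization in Fact \ref{2.2+}. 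But $\size{W}=\om_1<\ka$, so Corollary \ref{3.4.1} (using the diamond on $\om_1$) upgrades $\col(W)>\om$ to $\List(W)>\om$, contradicting the hypothesis. Hence $E$ is non-stationary.

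Finally, choosing a club $C^*\subseteq\ka$ disjoint from $E$ and consisting of levels $\delta_\alpha$, the subfiltration $\seq{X_\delta:\delta\in C^*}$ has the property that every $x\in X\setminus X_\delta$ satisfies $\size{\calE^x\cap X_\delta}<\om$ — for levels of uncountable cofinality by $(\star)$ as above, and for the remaining countable-cofinality levels because $\delta\notin E$. Since also $\List(X_\delta)\le\om$, Lemma \ref{0.4} yields $\List(X)\le\om$, as required. I expect the genuinely delicate step to be the $\FRP(\ka)$ reflection producing $W$ with $\col(W)>\om$, together with the verification that $W$ has size $\om_1$; everything else is bookkeeping about the chain of models and the two closure properties.
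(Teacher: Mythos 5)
Your proof is correct and follows essentially the same strategy as the paper's: isolate the set of countable-cofinality ordinals below which some later vertex has infinitely many neighbors, use $\FRP(\ka)$ to reflect this set into a structure of size $\om_1$, use the diamond hypothesis (via Corollary \ref{3.4.1}) to convert the list-chromatic hypothesis into a countable-coloring-number witness $f$ there, and derive a contradiction by pressing down in $[I]^\om$ through the characterization in Fact \ref{2.2+}. Two remarks. First, your endgame differs slightly from the paper's and is arguably cleaner: the paper deduces $\col(X)\le\om$ from the non-stationarity of its set $S$ via Fact \ref{2.8+}, whereas you conclude $\List(X)\le\om$ directly from Lemma \ref{0.4} applied to your chain of countably closed elementary submodels; your route needs only $\List(X_\delta)\le\om$ at each level of the filtration, which is literally the hypothesis, rather than a coloring-number bound on all proper initial segments. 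Second, in the pressing-down step you must first thin $E$ on a club so that $\delta\mapsto x_\delta$ is injective (the paper does this explicitly by intersecting with the club of $\alpha$ lying above $x_{\delta}$ for all $\delta\in E\cap\alpha$); without this, the set $\{x_{\sup y}\mid y\in A'\}$ could a priori be finite, and the final contradiction with the finiteness of $f(\gamma)$ evaporates. This is a one-line fix and is indeed part of the ``standard computation'' you invoke, but it is the one place where your sketch, read literally, does not close.
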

\begin{proof}
We may assume that the graph $X$ is of the form $\seq{\ka, \calE}$.
Let $S=\{\alpha \in \ka \cap \Cof(\om)\mid 
\exists \beta \ge \alpha\,(\calE^\beta \cap \alpha$ is infinite$)\}$.
If $S$ is non-stationary,
then we can deduce $\List(X) \le \col(X) \le \om$  by Fact  \ref{2.8+}.

Now we show that $S$ is non-stationary in $\ka$.
Suppose to the contrary that $S$ is stationary.
For each $\alpha \in S$, fix $\beta(\alpha) \ge \alpha$
such that $\calE^{\beta(\alpha)} \cap \alpha$ is infinite.
Let $C=\{\alpha<\ka\mid 
\beta(\alpha')<\alpha$ for every $\alpha' \in S \cap \alpha\}$.
$C$ is a club.
Let $S^*=S\cap C$. $S^*$ is also stationary.
Note that $\beta(\alpha) \neq \beta(\alpha')$ for every distinct $\alpha,\alpha' \in S^*$.
Take  $g:S^* \to [\ka]^\om$ such that
$g(\alpha) \in [\calE^{\beta(\alpha)} \cap \alpha]^\om$.
By $\FRP(\ka)$,
there is $I \in [\ka]^{\om_1}$ 
such that $\sup(I) \notin I$, $\cf(\sup(I))=\om_1$, and 
the set $A=\{x \in [I]^\om\mid  \sup(x) \in S^*, g(\sup(x)) \subseteq x\}$ is 
stationary in $[I]^\om$.
Let $Y$ be the induced subgraph $I \cup \{\beta(\alpha)\mid \alpha \in I \cap S^*\}$.
$\size{Y} =\om_1$, hence $\List(Y) \le \om$  by our assumption,
and $\col(Y) \le \om$ by 
Corollary \ref{3.4.1}.
By Corollary \ref{2.2+}, we can find $f:Y \to [Y]^{<\om}$ such that
for every distinct $\alpha,\alpha' \in Y$, if $\alpha \relE \alpha'$ then
either $\alpha \in f(\alpha')$ or $\alpha' \in f(\alpha)$.
For each $\alpha  \in Y$,
$g(\alpha)$ is infinite but $f(\beta(\alpha))$ is finite.
Thus we can take a function $h$ on $A$ so that $h(x) \in g(\sup(x)) \setminus f(\beta(\sup(x)))$.
Then, we can find $\gamma \in \bigcup A$ such that
$A'=\{x \in A\mid  h(x)=\gamma\}$
is stationary in $[I]^\om$.
For $x \in A'$,
since $\gamma=h(x) \in g(\sup(x)) \subseteq \calE^{\beta(\sup(x))} \cap \sup(x) $ but $\gamma \notin f(\beta(\sup(x)))$,
we have $\beta(\sup(x)) \in f(\gamma)$.
However this is impossible since $\{\beta(\sup(x))\mid x \in A'\}$ is infinite but $f(\gamma)$ is finite.
\end{proof}

The proof of Proposition \ref{1.7} yields the  following:
\begin{cor}[\cite{FSSU}]\label{frp to col}
If $\FRP$ holds, then $\Refl(\col)$ holds as well.
\end{cor}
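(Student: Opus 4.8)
The plan is to adapt the argument of Proposition~\ref{1.7} to the coloring number, where it in fact becomes shorter because the desired reflection is already built into the choice of a minimal counterexample. First I would take a graph $X$ with $\col(X)>\om$ and apply Corollary~\ref{2.10+++} (with $\la=\om$) to extract a subgraph $Y$ with $\ka:=\size{Y}$ regular uncountable, $\col(Y)>\om$, and $\col(Z)\le\om$ for every subgraph $Z$ of $Y$ with $\size{Z}<\ka$. If $\ka=\om_1$ then $Y$ itself is a subgraph of $X$ of size $\om_1$ with uncountable coloring number, which is exactly what $\Refl(\col)$ requires, so the whole task reduces to excluding the case $\ka\ge\om_2$, where I will invoke $\FRP(\ka)$.

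So assume $\ka\ge\om_2$ and identify $Y$ with $\seq{\ka,\calE}$. By Fact~\ref{2.8+} (applied with $\la=\om$), since $\col(Y)>\om$ the set $S=\{\alpha\in\ka\cap\Cof(\om)\mid \exists\beta\ge\alpha\,(\calE^\beta\cap\alpha \text{ is infinite})\}$ is stationary in $\ka$. Exactly as in Proposition~\ref{1.7} I would fix $\beta(\alpha)\ge\alpha$ with $\calE^{\beta(\alpha)}\cap\alpha$ infinite for each $\alpha\in S$, pass to a stationary $S^*\subseteq S$ on which $\alpha\mapsto\beta(\alpha)$ is injective, choose $g(\alpha)\in[\calE^{\beta(\alpha)}\cap\alpha]^\om$, and apply $\FRP(\ka)$ to obtain $I\in[\ka]^{\om_1}$ with $\sup(I)\notin I$, $\cf(\sup(I))=\om_1$, and $A=\{x\in[I]^\om\mid \sup(x)\in S^*,\ g(\sup(x))\subseteq x\}$ stationary in $[I]^\om$.

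The single place where the coloring version diverges from—and simplifies—the list version is the next step. Let $Y'$ be the induced subgraph on $I\cup\{\beta(\alpha)\mid\alpha\in I\cap S^*\}$; then $\size{Y'}=\om_1<\ka$, so $\col(Y')\le\om$ \emph{directly} from the reflection property delivered by Corollary~\ref{2.10+++}. By contrast, Proposition~\ref{1.7} had to route through the hypothesis $\List(Y')\le\om$ together with Corollary~\ref{3.4.1} (and hence $\diamondsuit$) in order to secure $\col(Y')\le\om$; here that detour is unnecessary. With $\col(Y')\le\om$ in hand, Fact~\ref{2.2+} yields $f:Y'\to[Y']^{<\om}$ such that $\alpha\relE\alpha'$ implies $\alpha\in f(\alpha')$ or $\alpha'\in f(\alpha)$, and the remainder is verbatim from Proposition~\ref{1.7}: choose $h(x)\in g(\sup(x))\setminus f(\beta(\sup(x)))$, press down to a stationary $A'\subseteq A$ on which $h$ is constantly equal to some $\gamma$, note that for $x\in A'$ one must have $\beta(\sup(x))\in f(\gamma)$, and contradict the finiteness of $f(\gamma)$ against the infinitude of $\{\beta(\sup(x))\mid x\in A'\}$.

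The main conceptual point—rather than a genuine obstacle—is recognizing that for the coloring number the needed countability of $\col(Y')$ comes for free from the minimality in Corollary~\ref{2.10+++}, so the $\FRP$-based counting argument of Proposition~\ref{1.7} transfers unchanged and without the diamond hypothesis. Since $\FRP$ supplies $\FRP(\ka)$ for every regular $\ka\ge\om_2$, the case $\ka\ge\om_2$ is uniformly excluded, forcing $\ka=\om_1$ and thereby establishing $\Refl(\col)$.
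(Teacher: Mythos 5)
Your proof is correct and follows essentially the same route as the paper: the paper runs an induction on $\size{X}$, handling singular sizes by Shelah's singular compactness (Fact \ref{2.10++}) and regular sizes by the $\FRP$ argument of Proposition \ref{1.7}, whereas you package the singular case into the minimal regular counterexample of Corollary \ref{2.10+++} and then run the same $\FRP$ argument. Your observation that $\col(Y')\le\om$ comes for free (so the diamond detour of Proposition \ref{1.7} is unnecessary) is exactly the point the paper is relying on when it says ``argue as in the proof of Proposition \ref{1.7}.''
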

\begin{proof}
By induction on size of graphs.
Let $X$ be a graph of size $\ge \om_2$, and
suppose every subgraph of size $\om_1$ has countable coloring number.
By the induction hypothesis,
every subgraph of size $<\size{X}$ has countable coloring number.
If $\size{X}$ is regular, argue as in the proof of Proposition \ref{1.7}.
If $\size{X}$ is singular, we can apply Fact \ref{2.10++}.
\end{proof}
As mentioned before,
in fact $\FRP$ is equivalent to $\Refl(\col)$
(\cite{FSSU}).

\section{Forcing notion adding a good coloring}\label{sec5}
In this section we define a forcing notion
which adds a good coloring of a given graph.
We will use this forcing notion for the proofs of Theorems \ref{thm1} and \ref{thm2} (1).

First we recall some basic definitions.
Let $\bbP$ be a poset.
Every set $x$ has the canonical name $\check x$
defined by $\check{x}=\{\seq{\check y,1} \mid y \in x\}$,
where $1$ is the maximum element of the poset.
We frequently omit the check of $\check{x}$,
and simply write $x$.

\begin{define}
Let $\bbP$ be a poset and $\theta$ a sufficiently large regular cardinal.
Let $M \prec \calH_\theta$ be a countable model with $\bbP \in M$.
\begin{enumerate}
\item A condition $p \in \bbP$ is an \emph{$(M, \bbP)$-generic condition}
if for every dense open set $D \in M$ in $\bbP$ and 
$q \le p$, there is $r \in D \cap M$ which is compatible with $q$.
\item A condition $p \in \bbP$ is a \emph{strong $(M,\bbP)$-generic condition}
if for every dense open set $D \in M$ in $\bbP$,
there is some $q \in D \cap M$ with $p \le q$.
\item A descending sequence $\seq{p_n\mid n<\om}$ in $\bbP$ is an
\emph{$(M,\bbP)$-generic sequence} if $p_n \in M$ for $n<\om$, and
for every dense open set $D \in M$ in $\bbP$,
there is $n<\om$ with $p_n \in D \cap M$.
\end{enumerate}
Every strong $(M,\bbP)$-generic condition is an $(M,\bbP)$-generic condition.
If an $(M,\bbP)$-generic sequence $\seq{p_n\mid n<\om}$ has a lower bound $p$,
then $p$ is a strong $(M, \bbP)$-generic condition.
\end{define}

Let $M \prec \calH_\theta$ be a model with $\bbP \in M$,
and $G$ be $(V, \bbP)$-generic.
Let $M[G]=\{\dot x_G \mid \dot x \in M\}$,
where $\dot x_G$ is the interpretation of $\dot x$ by $G$.
The following are known:
\begin{enumerate}
\item $M[G] \prec \calH_\theta^{V[G]}$.
\item If $G$ contains an $(M, \bbP)$-generic condition,
then $M \cap \mathrm{ON}=M[G] \cap \mathrm{ON}$.
\end{enumerate}

\begin{define}
Let $\bbP,\bbQ$ be posets,
and suppose $\bbP$ is a suborder of $\bbQ$,
that is, $\bbP \subseteq \bbQ$ and for $p_0, p_1 \in \bbP$, $p_0 \le p_1$ in $\bbP$ if and only if
$p_0 \le p_1$ in $\bbQ$.
\begin{enumerate}
\item For $q \in \bbQ$, a condition $p \in \bbP$
is a \emph{reduction of $q$} if
for every $r \le p$ in $\bbP$, $r$ is compatible with $q$ in $\bbQ$.
\item $\bbP$ is a \emph{complete suborder} of $\bbQ$
if (i) $p \bot q$ in $\bbP$ then so does in $\bbQ$,
and (ii) every $q \in \bbQ$ has a reduction $p \in \bbP$.
(ii) is equivalent to the property that every maximal antichain in $\bbP$ is maximal in $\bbQ$.
\item For a $(V,\bbP)$-generic $G$, the \emph{quotient poset} $\bbQ/G$
is the suborder $\{q \in \bbQ\mid q$ is compatible with any $p \in G\}$.
When $G$ is clear from the context,
$\bbQ/G$ is denoted by $\bbQ/\bbP$.
\end{enumerate}
\end{define}

\begin{fact}
Let $\bbQ$ be a poset and $\bbP$ a complete suborder of $\bbQ$.
\begin{enumerate}
\item If $G$ is $(V, \bbP)$-generic and $H$ is $(V[G], \bbQ/G)$-generic,
then $H$ is $(V, \bbQ)$-generic and $V[G][H]=V[H]$.
\item If $H$ is $(V, \bbQ)$-generic, then $G=H \cap \bbP$ is $(V, \bbP)$-generic,
$H$ is $(V[G],\bbQ/G)$-generic, and $V[H]=V[G][H]$.
\item Suppose $q \in \bbQ$ has the greatest reduction $p \in \bbP$.
Then for every $(V, \bbP)$-generic $G$,
$q \in \bbQ/G$ if and only if $p \in G$.
\end{enumerate}
\end{fact}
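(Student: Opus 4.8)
The three clauses are the standard facts about quotients by a complete suborder, and the engine behind all of them is the \emph{reduction function}: by condition (ii) of completeness every $q \in \bbQ$ has at least one reduction $p \in \bbP$. At the outset I would isolate two elementary consequences of the definitions, since every later genericity computation reduces to them. First, a purely order-theoretic observation: if $s \le q$ in $\bbQ$ then any reduction of $s$ is also a reduction of $q$, and any $\bbP$-extension of a reduction of $q$ is again a reduction of $q$. Second — and this is where condition (i) enters — if $p \in \bbP$ is a reduction of $q$ and $p \in G$, then $q \in \bbQ/G$: for any $g \in G$ pick $r \in G$ with $r \le g,p$, and since $r \le p$ the reduction property gives $t \le r,q$ in $\bbQ$, whence $t \le g,q$, so $q$ is compatible with every element of $G$.

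For clause (3) the direction $p \in G \Rightarrow q \in \bbQ/G$ is exactly the second observation. For the converse I would argue contrapositively: if $p \notin G$ then genericity of $G$ supplies $r \in G$ with $r \bot p$ in $\bbP$; since $q \in \bbQ/G$ and $r \in G$ there is $s \le r,q$ in $\bbQ$; a reduction $s'$ of $s$ is a reduction of $q$ because $s \le q$, so $s' \le p$ by maximality of the \emph{greatest} reduction; but $s'$ is compatible with $s \le r$ in $\bbQ$, hence $s'$ and $r$ are compatible in $\bbQ$, hence in $\bbP$ by condition (i), and a common $\bbP$-extension lies below both $p$ and $r$, contradicting $r \bot p$.

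For clause (1) the filter properties of $H$ on $\bbQ$ are routine once one notes that $\bbQ/G$ is upward closed in $\bbQ$ (immediate from the definition of the quotient), so the heart is genericity over $V$. Given a dense $D \in V$ of $\bbQ$ and $q \in \bbQ/G$, the plan is to produce $d \in D \cap (\bbQ/G)$ with $d \le q$, i.e. to show $D \cap (\bbQ/G)$ is dense in $\bbQ/G$, after which genericity of $H$ over $V[G]$ finishes. First I show $G$ contains a reduction $r$ of $q$: the set of $\bbP$-conditions that are either incompatible with $q$ or a reduction of $q$ is dense — given $r_0$, a reduction $s'$ of a witness $s \le r_0,q$ is a reduction of $q$ and, by condition (i), compatible with $r_0$ in $\bbP$, so a common extension lands below $r_0$ — and the element of $G$ it provides cannot be incompatible with $q$ since $q \in \bbQ/G$. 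Working below this $r$, the set $\{r' : r' \bot q,\ \text{or}\ \exists d \le q\,(d \in D \text{ and } r' \text{ is a reduction of } d)\}$ is dense below $r$ by the same merging trick, and the element of $G$ it provides is a reduction of some $d \le q$ in $D$, so the second observation gives $d \in \bbQ/G$. Finally $V[H]=V[G][H]$ reduces to $G = H \cap \bbP$: the inclusion $H \cap \bbP \subseteq G$ is immediate from condition (i), and I get equality by checking $H \cap \bbP$ is $(V,\bbP)$-generic and that a filter containing a $(V,\bbP)$-generic filter must coincide with it.

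Clause (2) is the mirror image. That $G = H \cap \bbP$ is $(V,\bbP)$-generic uses that a dense $D \in V$ of $\bbP$ is \emph{predense in $\bbQ$} — given $q$, a reduction $p$ of $q$ has an extension in $D$, which is then compatible with $q$ — so $H$ meets its downward closure; directedness of $H \cap \bbP$ comes from the genericity just proved together with condition (i). Showing $H$ is $(V[G],\bbQ/G)$-generic is the step I expect to be the main obstacle, because a dense set $D \in V[G]$ of $\bbQ/G$ is only named, not a ground-model object: I would fix $p_0 \in G$ forcing ``$\dot D$ is dense in $\bbQ/\dot G$'', and in $V$ form the set of $q \le p_0$ admitting a reduction $p$ with $p \force \check q \in \dot D$, together with the conditions incompatible with $p_0$. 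The delicate points are verifying this set is dense in $\bbQ$ below $p_0$ and then, after $H$ meets it, arranging that the relevant reduction lands in $G$ so that the forced membership $q \in D$ is genuinely realized in $V[G]$ and hence $q \in H \cap D$. As before $V[H]=V[G][H]$ follows once $G = H \cap \bbP \in V[H]$ is in hand.
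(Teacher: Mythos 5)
The paper states this Fact without proof (it is cited as standard), so there is no in-paper argument to compare against; I am judging your write-up on its own merits. Your preliminary observations, your proof of clause (3), your proof of clause (1), and the first part of clause (2) (that $H\cap\bbP$ is $(V,\bbP)$-generic, via predensity in $\bbQ$ of dense subsets of $\bbP$) are all correct and are the standard arguments.

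There is, however, a genuine gap exactly at the place you flag as ``the main obstacle'': the $(V[G],\bbQ/G)$-genericity of $H$ in clause (2). Your proposed dense set consists of the $q\le p_0$ \emph{admitting some} reduction $p$ with $p\Vdash\check q\in\dot D$. This set is indeed dense, but when $H$ meets it at $q^*$ you cannot ``arrange that the relevant reduction lands in $G$'': a condition $q^*$ may have two incompatible reductions $p,p'$ with $p\Vdash\check q^*\in\dot D$ and $p'\Vdash\check q^*\notin\dot D$, and nothing prevents $p'\in G$ while $p\notin G$ (already for $\bbQ=\bbP*\dot\bbR$ and $q^*=\seq{1,\dot r}$, \emph{every} $p\in\bbP$ is a reduction of $q^*$). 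So the witnessing reduction is useless, and no amount of further density below it repairs this, because membership in your set is an existential statement about reductions. The fix is to change the dense set so that the witness is forced into $G$ automatically: take
\[
E=\{q\in\bbQ : q\bot p_0\}\cup\{q\in\bbQ : \exists d\ge q\ \exists p\in\bbP\ (q\le p \text{ in } \bbQ \ \wedge\ p\Vdash_{\bbP}\check d\in\dot D)\}.
\]
Density of $E$ follows from your own machinery: below $q_0$ compatible with $p_0$ pass to $q_1\le q_0,p_0$, take a reduction $p_1\le p_0$ of $q_1$, use $p_1\Vdash\text{``}\check q_1\in\bbQ/\dot G\text{''}$ to find $p_2\le p_1$ and $d\le q_1$ with $p_2\Vdash\check d\in\dot D$ (whence $p_2$ is a reduction of $d$, by the absoluteness of compatibility), and finally take $q\le p_2,d$ in $\bbQ$. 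Now if $q^*\in H\cap E$ with witnesses $d,p$, then $q^*\le p$ gives $p\in H\cap\bbP=G$, so $d\in\dot D_G$, and $q^*\le d$ gives $d\in H$. The same issue, in milder form, affects the very last step of your clause (1) only insofar as it quotes clause (2); the part of clause (2) it needs (genericity of $H\cap\bbP$) is unaffected.
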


In order to define our forcing notion, we need more definitions and lemmas.

\begin{define}
Let $\ka \ge \om_2$ be a cardinal.
We say that a graph $X$ is \emph{$\ka$-nice} if $X$ satisfies the following conditions:
\begin{enumerate}
\item $X$ is of the form $\seq{\ka, \calE}$.
\item $\col(X) \le \om_1$.
\item $\col(Y) \le \om$ for every subgraph $Y$ of size $\om_1$.
\item For every $\alpha<\ka$,
$\size{\calE^\alpha \cap \alpha} \le \om$.
\end{enumerate}
\end{define}
Notice that for a given graph $X$ of size $\ka$, if $\col(X) \le \om_1$, then 
there is an enumeration $\seq{x_i\mid i<\ka}$ of $X$ with
$\calE^{x_i} \cap \{x_j\mid j<i\}$ countable for every $i<\ka$ by Fact \ref{2.2+}.
Hence for every graph $X$ of size $\ka$, if $X$ satisfies the conditions (2) and (3)
then there is a $\ka$-nice graph which is isomorphic to $X$.

Fix a $\ka$-nice graph $X$ and an $\om$-assignment $F:\ka \to [\ka]^\om$.
Under CH, we shall define a forcing notion $\bbP$ such that
$\bbP$ satisfies the $\om_2$-c.c., $\sigma$-Baire,
and adds a good coloring $f$ of $X$ with $f(\alpha) \in F(\alpha)$.
Throughout this section, we assume CH. Let $\theta$ be a sufficiently large regular cardinal.

\begin{lemma}\label{2.1}
For every $x \in [\ka]^\om$,
the set $\{\beta<\ka\mid  \calE^\beta \cap x$ is infinite $\}$ is at most countable.
\end{lemma}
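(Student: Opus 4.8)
The plan is to prove the contrapositive by showing that if the conclusion fails, then $X$ contains a subgraph of size $\om_1$ with uncountable coloring number, contradicting the defining property (3) of $\ka$-niceness. So fix $x \in [\ka]^\om$ and suppose toward a contradiction that the set $T = \{\beta < \ka \mid \calE^\beta \cap x \text{ is infinite}\}$ is uncountable. The key observation is that $T$ together with $x$ witnesses a dense pattern of edges: every $\beta \in T$ is joined to infinitely many vertices of the fixed countable set $x$.

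First I would pass to a subset $T' \subseteq T$ of size $\om_1$ and consider the induced subgraph $Y$ on vertex set $x \cup T'$. This has size $\om_1$. By $\ka$-niceness, property (3) gives $\col(Y) \le \om$. I would then invoke the characterization in Fact \ref{2.2+}: since $\col(Y) \le \om$, there is a function $f : Y \to [Y]^{<\om}$ such that whenever $u \relE v$ in $Y$ we have $u \in f(v)$ or $v \in f(u)$. The idea is to derive a counting contradiction from the finiteness of the values $f(\cdot)$ against the infiniteness of $\calE^\beta \cap x$ for $\beta \in T'$.

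The main step is a Fodor/pigeonhole argument on the countable set $x$. For each $\beta \in T'$, since $\calE^\beta \cap x$ is infinite but $f(\beta)$ is finite, I can pick some $\eta_\beta \in (\calE^\beta \cap x) \setminus f(\beta)$. For such $\eta_\beta$, the edge $\eta_\beta \relE \beta$ forces $\beta \in f(\eta_\beta)$. Now $\eta_\beta$ ranges over the \emph{countable} set $x$, while $\beta$ ranges over the \emph{uncountable} set $T'$, so by pigeonhole there is a single $\eta \in x$ with $\{\beta \in T' \mid \eta_\beta = \eta\}$ uncountable. All these uncountably many distinct $\beta$ lie in $f(\eta)$, which is finite — an immediate contradiction.

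I expect the main (though modest) obstacle to be purely bookkeeping: making sure the induced subgraph $Y$ genuinely falls under property (3) of $\ka$-niceness and that the edges $\eta_\beta \relE \beta$ are indeed edges of $Y$ (they are, since both endpoints lie in $x \cup T'$), and confirming that Fact \ref{2.2+} applies to $Y$ viewed as an induced subgraph of the $\ka$-nice graph. Once the setup is in place, the pigeonhole contradiction is completely routine. Thus $T$ must be at most countable, as desired.
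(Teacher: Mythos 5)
Your proof is correct, but it takes a genuinely different route from the paper's. The paper argues in one line: if the set were uncountable, pick $Z\in[\ka]^{\om_1}$ of vertices each joined to infinitely many points of $x$; since CH gives $\om_1=2^\om$, Fact \ref{0.3} (the complete-bipartite obstruction) yields $\List(x\cup Z)>\om$, hence $\col(x\cup Z)>\om$, contradicting clause (3) of $\ka$-niceness. You instead bypass the list-chromatic number entirely: you assume $\col(x\cup T')\le\om$ from clause (3), pull out the Erd\H{o}s--Hajnal witness $f:Y\to[Y]^{<\om}$ from Fact \ref{2.2+}, and run a pigeonhole argument on the countable set $x$ to get uncountably many $\beta$ forced into a single finite set $f(\eta)$. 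Both proofs contradict the same niceness clause, and your bookkeeping worries are harmless (an induced subgraph is a subgraph, and the edges $\{\eta_\beta,\beta\}$ do lie in the induced subgraph on $x\cup T'$). The trade-off: the paper's argument is shorter given that Fact \ref{0.3} is already on the table, but it genuinely uses CH (to make $\size{Z}\ge 2^\om$); your argument is self-contained, more elementary, and shows the lemma needs only clause (3) of $\ka$-niceness with no cardinal-arithmetic hypothesis. Since CH is assumed throughout Section 5 anyway, nothing is lost either way, but your version is the more robust one. It is in fact the same mechanism the paper itself uses later in the proof of Lemma \ref{2.3+}.
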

\begin{proof}
Otherwise,  we can find 
$Z \in [\ka]^{\om_1}$ such that
$\size{x \cap \calE^\beta} \ge \om$ for every $\beta \in Z$.
By CH and Fact \ref{0.3}, we have $\om<\List(x \cup Z)\le \col(x \cup Z)$, this  is a contradiction.
\end{proof}

\begin{define}\label{2.2}
A set $x \subseteq \ka$ is said to be \emph{$\seq{X,F}$-complete} (or simply \emph{complete}) if the following hold:
\begin{enumerate}
\item $\calE^{\alpha} \cap \alpha \subseteq x$ and $F(\alpha) \subseteq x$ for every $\alpha \in x$.
\item For every $\beta<\ka$, if $\calE^\beta \cap x$ is infinite then
$\beta \in x$.
\end{enumerate}
\end{define}
Note that if $x$ and $y$ are complete,
then both $x \cap y$ and $x \cup y$ are complete as well.

We say that a set $A$ is \emph{$\sigma$-closed} if
$[A]^\om \subseteq A$.
By CH, for each $x \in \calH_\theta$ there is a $\sigma$-closed $N \prec \calH_\theta$ of
size $\om_1$ containing $x$.

\begin{lemma}\label{2.3}
Let $N \prec \calH_\theta$ be  $\sigma$-closed with $X,F \in N$ and $\size{N}=\om_1$.
Then $N \cap \ka$ is complete.
\end{lemma}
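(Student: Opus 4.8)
The plan is to verify the two clauses of Definition~\ref{2.2} directly for $x = N \cap \ka$, exploiting that $N$ is both elementary and $\sigma$-closed. First I would check clause~(1). Let $\alpha \in N \cap \ka$. Since $X = \seq{\ka, \calE} \in N$ and $\alpha \in N$, elementarity gives $\calE^\alpha \cap \alpha \in N$; moreover this set is countable (by niceness, condition~(4) of being $\ka$-nice, $\size{\calE^\alpha \cap \alpha} \le \om$). A countable set that is an element of $N$ is in fact a \emph{subset} of $N$: indeed each of its members is definable from $\alpha$ and $X$ inside $N$, or more cleanly, since $N$ is $\sigma$-closed and the set is a countable subset of $N$'s ordinals—but the cleanest route is to note $\calE^\alpha \cap \alpha$ is a countable element of $N$, hence all its members are in $N$, so $\calE^\alpha \cap \alpha \subseteq N \cap \ka = x$. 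The same argument applies to $F(\alpha)$: since $F \in N$ and $\alpha \in N$, we get $F(\alpha) \in N$, and $F(\alpha) \in [\ka]^\om$ is countable, so $F(\alpha) \subseteq N \cap \ka = x$.

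Next I would check clause~(2). Suppose $\beta < \ka$ and $\calE^\beta \cap x = \calE^\beta \cap (N \cap \ka)$ is infinite. I want to conclude $\beta \in x$, i.e. $\beta \in N$. The key tool is Lemma~\ref{2.1}: for any \emph{fixed} countable $y \in [\ka]^\om$, the set $B_y = \{\beta < \ka \mid \calE^\beta \cap y \text{ is infinite}\}$ is countable. The idea is to locate a countable $y \subseteq N \cap \ka$ with $y \in N$ such that $\calE^\beta \cap y$ is already infinite. Since $\calE^\beta \cap (N \cap \ka)$ is infinite, I can pick a countable subset $y \subseteq \calE^\beta \cap (N \cap \ka)$ with $y$ infinite; as $y$ is a countable subset of $N$ and $N$ is $\sigma$-closed, $y \in N$. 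Then $\calE^\beta \cap y = y$ is infinite, so $\beta \in B_y$. Now $B_y \in N$ by elementarity (since $y, X \in N$), and $B_y$ is countable by Lemma~\ref{2.1}, hence $B_y \subseteq N$; therefore $\beta \in B_y \subseteq N \cap \ka = x$, as desired.

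The step I expect to be the main obstacle is the passage from ``countable and an element of $N$'' to ``a subset of $N$'', and dually securing $y \in N$. The former is routine once one observes that any countable set lying in $N$ has all of its elements in $N$ (each element is the value of some function in $N$ applied to a natural number, or one simply uses that $\om \subseteq N$ together with elementarity to enumerate it inside $N$). The latter uses $\sigma$-closure of $N$ in the strong form $[N]^\om \subseteq N$, which guarantees that the chosen countable $y \subseteq N$ is itself a member of $N$. With those two closure facts in hand, both clauses of completeness follow by combining elementarity with the counting provided by niceness (for clause~(1)) and by Lemma~\ref{2.1} (for clause~(2)), completing the proof that $N \cap \ka$ is $\seq{X,F}$-complete.
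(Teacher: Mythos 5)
Your proof is correct and follows essentially the same route as the paper's: clause (1) via the countability of $\calE^\alpha\cap\alpha$ and $F(\alpha)$ together with their membership in $N$, and clause (2) by extracting a countable infinite $y\subseteq\calE^\beta\cap N\cap\ka$, using $\sigma$-closure to get $y\in N$, and applying Lemma \ref{2.1} to conclude that the (countable, $N$-definable) set of vertices meeting $y$ infinitely is contained in $N$. The only difference is that you spell out clause (1), which the paper dismisses as clear.
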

\begin{proof}
It is clear that $\calE^\alpha \cap \alpha, F(\alpha) \subseteq N \cap \ka$ for every $\alpha \in N \cap \ka$.
Take $\beta<\ka$,
and suppose $\calE^\beta \cap (N \cap \ka)$ is infinite.
Take a countable subset $a \subseteq \calE^\beta \cap N \cap \ka$.
By the $\sigma$-closure of $N$,
we have that $a \in N$.
The set $\{\gamma<\ka \mid \calE^\gamma \cap a$ is infinite$\}$ is in $N$ and
at most countable by Lemma \ref{2.1}.
Hence $\{\gamma<\ka \mid \calE^\gamma \cap a$ is infinite$\}\subseteq N \cap \ka$,
and we have $\beta \in \{\gamma<\ka \mid \calE^\gamma \cap a$ is infinite$\}\subseteq N \cap \ka$.
\end{proof}

\begin{lemma}\label{2.3+}
Let $Y \subseteq \ka$ be a complete set of size $\om_1$,
and $M \prec \calH_\theta$ a countable model with $X, F, Y \in M$.
Then $M \cap Y$ is complete.
In particular, the set of all countable complete subsets of $\ka$ is stationary in $[\ka]^\om$.
\end{lemma}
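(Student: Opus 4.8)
The plan is to prove the two assertions of Lemma \ref{2.3+} in sequence, using the completeness of $Y$ together with the elementarity of $M$ and a reflection argument of the same flavor as in Lemma \ref{2.3}. First I would verify that $M \cap Y$ satisfies condition (1) of Definition \ref{2.2}. Given $\alpha \in M \cap Y$, since $Y$ is complete we have $\calE^\alpha \cap \alpha \subseteq Y$ and $F(\alpha) \subseteq Y$; but also $\calE^\alpha \cap \alpha$ and $F(\alpha)$ are countable sets definable in $M$ from the parameters $\alpha, X, F \in M$, so each is a countable subset of $\ka$ lying in $M$, whence it is contained in $M \cap \ka$. Intersecting, $\calE^\alpha \cap \alpha \subseteq M \cap Y$ and $F(\alpha) \subseteq M \cap Y$.

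The main work is condition (2), and here is where I expect the principal obstacle to lie, since $M$ is only countable and we cannot simply invoke $\sigma$-closure as in Lemma \ref{2.3}. Suppose $\beta<\ka$ and $\calE^\beta \cap (M \cap Y)$ is infinite; I want $\beta \in M \cap Y$. The key observation is that $\calE^\beta \cap M \cap Y$ is then an infinite subset of $Y$, so by completeness of $Y$ we get $\beta \in Y$ immediately. To place $\beta$ inside $M$ I would argue as follows: the set $W=\{\gamma \in Y \mid \calE^\gamma \cap Y \text{ is infinite}\}$ is definable in $\calH_\theta$ from $X,F,Y$, hence $W \in M$, and completeness of $Y$ guarantees $W \subseteq Y$. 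Now $\beta \in W$, but to conclude $\beta \in M$ I must check that $W$ is small enough to be absorbed, i.e.\ that $W \subseteq M$. This is delicate because $\size{W}$ could be $\om_1$; the resolution is to work with a fixed infinite $a \subseteq \calE^\beta \cap M \cap Y$ with $a \in M$ (take $a$ countable, enumerated inside $M$ since its elements lie in $M$), apply Lemma \ref{2.1} to $a$ so that $\{\gamma<\ka \mid \calE^\gamma \cap a \text{ is infinite}\}$ is at most countable and lies in $M$, hence is contained in $M \cap \ka$, and finally note $\beta$ belongs to this countable set. Thus $\beta \in M \cap \ka \cap Y = M \cap Y$, exactly mirroring the final step of Lemma \ref{2.3}.

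For the ``In particular'' clause I would deduce stationarity of the countable complete sets in $[\ka]^\om$ from Lemma \ref{2.3} combined with the first part. By Lemma \ref{2.3}, taking any $\sigma$-closed $N \prec \calH_\theta$ of size $\om_1$ with $X,F \in N$, the set $Y = N \cap \ka$ is complete of size $\om_1$. Then, given any function $G:[\ka]^{<\om}\to\ka$ witnessing a club in $[\ka]^\om$, I would build a countable $M \prec \calH_\theta$ containing $X,F,Y,G$ and closed under $G$; the first part of the lemma shows $M \cap Y$ is complete, and since $M \cap Y \subseteq M \cap \ka$ is countable, closed under $G$, and complete, it lands in the prescribed club while being a complete set. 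This exhibits a complete countable set in every club, giving stationarity. The only subtlety to watch is ensuring $M \cap Y$ (rather than $M \cap \ka$) is the object that is both complete and $G$-closed, which follows because $Y \in M$ forces the closure computations to stay inside $Y$.
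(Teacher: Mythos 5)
Your verification of condition (1) is fine, and your reduction of condition (2) to ``$\beta\in Y$ (by completeness of $Y$) plus $\beta\in M$'' is the right framing. But the step you flag as the ``resolution'' is exactly where the argument breaks: you cannot ``work with a fixed infinite $a \subseteq \calE^\beta \cap M \cap Y$ with $a \in M$.'' The parenthetical justification (``take $a$ countable, enumerated inside $M$ since its elements lie in $M$'') conflates $a \subseteq M$ with $a \in M$. Since $M$ is only countable, it is not $\sigma$-closed, and an infinite subset of $M$ is in general \emph{not} an element of $M$ (e.g.\ $M\cap\om_1\subseteq M$ but $M\cap\om_1\notin M$). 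The set $\calE^\beta\cap M\cap Y$ is defined from the parameters $\beta$ and $M$, neither of which is available inside $M$, and there is no visible way to extract from it an infinite subset lying in $M$. This is precisely the obstacle that makes the proof of Lemma \ref{2.3} (which genuinely uses the $\sigma$-closure of $N$ to get $a\in N$) inapplicable here, and without producing such an $a$ your appeal to Lemma \ref{2.1} gives you nothing: knowing that $\{\gamma<\ka\mid \calE^\gamma\cap a \text{ is infinite}\}$ is countable does not put $\beta$ into $M$ unless that set is itself an element of $M$. A telling symptom is that your argument for condition (2) never uses the hypothesis $\col(Y)\le\om$ (condition (3) of $\ka$-niceness), which is the essential input.

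The paper closes this gap differently: since $\size{Y}=\om_1$ and $X$ is $\ka$-nice, $\col(Y)\le\om$, so by Fact \ref{2.2+} and elementarity there is $f:Y\to[Y]^{<\om}$ \emph{in $M$} such that $\alpha\relE\beta$ implies $\alpha\in f(\beta)$ or $\beta\in f(\alpha)$. Given $\beta$ with $\calE^\beta\cap M\cap Y$ infinite, one has $\beta\in Y$ by completeness of $Y$, and since $f(\beta)$ is finite one can pick $\alpha\in(M\cap Y\cap\calE^\beta)\setminus f(\beta)$; then $\beta\in f(\alpha)$, and $f(\alpha)$ is a finite set belonging to $M$, hence $f(\alpha)\subseteq M$ and $\beta\in M\cap Y$. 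The point is that the transfer of $\beta$ into $M$ goes through a \emph{finite} set computed in $M$, not a countable one. Separately, in your stationarity argument the order of quantifiers is off: you fix $N$ (hence $Y$) before the club function $G$ is given, and then $Y=N\cap\ka$ need not be closed under $G$; the assertion that ``$Y\in M$ forces the closure computations to stay inside $Y$'' is not justified. This is easily repaired by choosing $N$ with $G\in N$ after $G$ is given, but as written the final clause does not follow.
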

\begin{proof}
By the assumption, we have $\col(Y) \le \om$.
Hence we can find a function $f:Y \to [Y]^{<\om}$ in $M$
such that for every $\alpha,\beta \in Y$, if $\alpha \relE \beta$ the
$\alpha \in f(\beta)$ or $\beta \in f(\alpha)$ by Fact \ref{2.2+}.

To show that $M \cap Y$ is complete,
we only show that for every $\beta<\ka$, if $M \cap Y \cap \calE^\beta$ is 
infinite then $\beta \in M \cap Y$.
Because $Y$ is complete, we have $\beta \in Y$.
On the other hand, since $M \cap Y \cap \calE^\beta$ is infinite but $f(\beta)$ is finite,
we can take  $\alpha \in (M \cap Y \cap \calE^\beta) \setminus f(\beta)$.
Then $\beta \in f(\alpha) \subseteq M$, so $\beta \in M \cap Y$ as required.
\end{proof}

Now we are ready to define our forcing notion.
 
\begin{define}\label{2.4}
$\bbP(X, F)$ is the poset which consists of 
all countable functions $p$ such that:
\begin{enumerate}
\item $p$ is a good coloring of the induced subgraph $\dom(p) \in [\ka]^\om$ with
$p(\alpha) \in F(\alpha)$.
\item  $\dom(p)$ is complete.
\end{enumerate}
Define $p \le q$ if $p \supseteq q$.
\end{define}

For simplicity, we omit the parameters $X$ and $F$ in $\bbP(X,F)$ and just write $\bbP$.

\begin{lemma}\label{2.5}
\begin{enumerate}
\item For every $p \in \bbP$ and  complete set $x \in [\ka]^\om$,
if $x \supseteq \dom(p)$ then there is $q \in \bbP$ such that
$q\le p$ and $\dom(q)=x$.
\item For every $x \in [\ka]^\om$,
the set $\{p \in \bbP\mid  x \subseteq \dom(p)\}$ is dense in $\bbP$.
\end{enumerate}
\end{lemma}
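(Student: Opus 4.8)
The plan is to derive (2) from (1), so the real content is the extension statement (1).

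\emph{Part (1).} Given $p \in \bbP$ and a complete $x \in [\ka]^\om$ with $x \supseteq \dom(p)$, I would construct $q \le p$ with $\dom(q)=x$ by extending the coloring $p$ to the new vertices one at a time, always choosing a color from the prescribed list. The key observation — and the whole reason completeness enters — is that each new vertex $\alpha \in x \setminus \dom(p)$ has only \emph{finitely} many neighbors inside $\dom(p)$: if $\calE^\alpha \cap \dom(p)$ were infinite, then completeness of $\dom(p)$ (clause (2) of Definition \ref{2.2}) would force $\alpha \in \dom(p)$, contradicting $\alpha \notin \dom(p)$. Next, the induced subgraph on $y := x \setminus \dom(p)$ is a countable subgraph of the $\ka$-nice graph $X$, hence has coloring number $\le \om$; so by Fact \ref{2.2+} there is a $1$-$1$ enumeration $\seq{\alpha_i \mid i<\ot(y)}$ of $y$ in which each $\alpha_i$ has only finitely many $\calE$-predecessors among $\{\alpha_j \mid j<i\}$.

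Then I would define $q$ by recursion along this enumeration, setting $q \restr \dom(p)=p$ and, at stage $i$, choosing $q(\alpha_i)\in F(\alpha_i)$ avoiding the forbidden set $\{q(\beta) \mid \beta \relE \alpha_i,\ \beta \in \dom(p)\cup\{\alpha_j \mid j<i\}\}$. This set is finite, since it receives at most finitely many contributions from $\dom(p)$ (by the observation above) and finitely many from earlier $\alpha_j$'s (by the choice of enumeration), while $F(\alpha_i)$ is infinite, so a legal color always exists. The resulting $q$ is a good coloring of the induced subgraph on $x$ with $q(\gamma)\in F(\gamma)$ for all $\gamma \in x$: for any edge $\gamma \relE \gamma'$ in $x$, either both lie in $\dom(p)$ (where $p$ is already good) or the one appearing later in the order $\dom(p)$-then-$y$ explicitly avoided the color of the earlier one. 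Since $\dom(q)=x$ is complete by hypothesis, we get $q \in \bbP$ and $q \le p$.

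\emph{Part (2).} For a fixed $x \in [\ka]^\om$ and arbitrary $p \in \bbP$, it suffices to find a countable complete set $x' \supseteq \dom(p)\cup x$, for then Part (1) produces $q \le p$ with $\dom(q)=x' \supseteq x$, witnessing density of $\{p \in \bbP \mid x \subseteq \dom(p)\}$. To obtain $x'$ I would invoke Lemma \ref{2.3+}, which states that the countable complete subsets of $\ka$ form a stationary subset of $[\ka]^\om$. Since $\{w \in [\ka]^\om \mid \dom(p)\cup x \subseteq w\}$ is a club in $[\ka]^\om$, it meets this stationary set, yielding the desired $x'$.

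The main obstacle is clause (2) of completeness (Definition \ref{2.2}), which is \emph{not} preserved under naive finite-stage closures: an increasing $\om$-union of sets each closed against vertices meeting them infinitely often may, in the limit, acquire a vertex whose intersection with the whole union is infinite. This is exactly why Part (2) routes through the stationarity statement of Lemma \ref{2.3+} (itself resting on Lemma \ref{2.1}) rather than an explicit closing-off, and why Part (1) must lean on the completeness of $\dom(p)$ to keep the old-neighborhoods of the new vertices finite.
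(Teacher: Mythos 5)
Your proof is correct and follows essentially the same route as the paper: part (1) by a greedy recursion over an enumeration of $x \setminus \dom(p)$, using completeness of $\dom(p)$ to keep each forbidden set finite, and part (2) by combining (1) with the stationarity of countable complete sets from Lemma \ref{2.3+} intersected with the club of sets containing $\dom(p)\cup x$. (The paper's recursion is marginally simpler: it takes an arbitrary $\om$-enumeration of $x\setminus\dom(p)$ and has each new vertex avoid \emph{all} previously chosen colors, already a finite set, so your detour through the coloring number of the new part and Fact \ref{2.2+} is harmless but unnecessary.)
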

\begin{proof}
(1) Take $p \in \bbP$. 
Let $\seq{\alpha_n\mid n<\om}$ be an enumeration of $x \setminus \dom(p)$.
Note that $\calE^{\alpha_n} \cap \dom(p)$ is finite for every $n<\om$.
Thus we can take a function $f$ on $\{\alpha_n\mid n<\om\}$ such that
$f(\alpha_n) \in F(\alpha_n) \setminus (p``(\calE^{\alpha_n} \cap \dom(p)) \cup f``\{\alpha_m\mid m<n\})$.
Let $q=p \cup f$.
It is easy to check that $q \in \bbP$ and $q \le p$.

(2) follows from (1) and Lemma \ref{2.3+}.
\end{proof}

\begin{lemma}\label{2.6}
\begin{enumerate}
\item For $p, q\in \bbP$, 
if
$p \cup q$ is a function then $p \cup q$ is the greatest lower bound of $p$ and $q$.
\item $\bbP$ satisfies the $\om_2$-c.c.
\end{enumerate}
\end{lemma}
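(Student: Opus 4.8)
The plan is to establish (1) by a direct verification and then deduce (2) from (1) by a $\Delta$-system argument that relies on CH.

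For (1), assume $p \cup q$ is a function; I would first check that $p \cup q \in \bbP$. It is clearly a countable function with $(p \cup q)(\alpha) \in F(\alpha)$ for every $\alpha \in \dom(p) \cup \dom(q)$, and its domain $\dom(p) \cup \dom(q)$ is complete because the union of two complete sets is complete, as noted after Definition \ref{2.2}. The only nontrivial point is that $p \cup q$ is a good coloring of the induced subgraph on $\dom(p) \cup \dom(q)$. Suppose $\alpha \relE \beta$ with $\alpha, \beta \in \dom(p) \cup \dom(q)$, and set $\gamma = \max(\alpha,\beta)$, $\delta = \min(\alpha,\beta)$, so that $\delta \in \calE^\gamma \cap \gamma$. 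The vertex $\gamma$ lies in $\dom(p)$ or in $\dom(q)$; say $\gamma \in \dom(p)$. Since $\dom(p)$ is complete, $\calE^\gamma \cap \gamma \subseteq \dom(p)$, whence $\delta \in \dom(p)$ as well. As $p$ is a good coloring, $p(\gamma) \neq p(\delta)$, and therefore $(p\cup q)(\alpha) \neq (p\cup q)(\beta)$. Thus $p \cup q \in \bbP$. Since the order is reverse inclusion, $p \cup q$ extends both $p$ and $q$, and any common extension $r$ satisfies $r \supseteq p \cup q$; hence $p \cup q$ is the greatest lower bound of $p$ and $q$.

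For (2), I would argue by contradiction, assuming $\{p_\xi \mid \xi < \om_2\}$ is an antichain. Each $\dom(p_\xi)$ is a countable subset of $\ka$, and under CH the $\Delta$-system lemma applies to $\om_2$ countable sets, since $\mu^\om \le \om_1 < \om_2$ for every $\mu < \om_2$. So there are $A \in [\om_2]^{\om_2}$ and a countable root $R$ with $\dom(p_\xi) \cap \dom(p_\eta) = R$ for all distinct $\xi, \eta \in A$. Each restriction $p_\xi \restr R$ is a function $g$ with $g(\alpha) \in F(\alpha)$ for $\alpha \in R$; as $R$ is countable and every $F(\alpha)$ is countable, there are at most $\om^\om = 2^\om = \om_1$ such functions under CH. Since $\size{A} = \om_2 > \om_1$, the pigeonhole principle yields distinct $\xi, \eta \in A$ with $p_\xi \restr R = p_\eta \restr R$. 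Because $\dom(p_\xi) \cap \dom(p_\eta) = R$ and the two conditions agree on $R$, the union $p_\xi \cup p_\eta$ is a function, so by (1) it is a common extension of $p_\xi$ and $p_\eta$, contradicting that they lie in an antichain.

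The main obstacle is pinning down the role of CH, which is used twice and in slightly different ways: to guarantee that the $\Delta$-system lemma is available at $\om_2$ for countable domains, and to bound the number of possible restrictions to the root by $\om_1$. The truly essential structural input, however, is the completeness of the domains established through Lemmas \ref{2.3} and \ref{2.3+} and used in part (1): it is exactly what forces agreement on the intersection to extend to a genuine condition, since without it a cross-edge between the two domains could spoil goodness and the union would fail to be a member of $\bbP$.
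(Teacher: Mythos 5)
Your proof is correct and follows essentially the same route as the paper's: part (1) uses completeness of $\dom(p)$ (via $\calE^\gamma\cap\gamma\subseteq\dom(p)$ for the larger endpoint $\gamma$) to reduce goodness of $p\cup q$ to goodness of $p$ or $q$ alone, and part (2) is the same $\Delta$-system plus pigeonhole argument, with (1) supplying the common extension. The only difference is cosmetic --- you phrase (2) as a contradiction and spell out the two uses of CH that the paper leaves implicit in ``by the $\Delta$-system lemma'' and ``a standard pigeonhole argument''.
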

\begin{proof}
(1).  
Suppose $r=p \cup q$ is a function.
The set $\dom(r)=\dom(p) \cup \dom(q)$ is complete and $r(\alpha) \in F(\alpha)$ for every
$\alpha \in \dom(r)$ since $p$ and $q$ are conditions.
Thus it is enough to check that $r=p \cup q$ is a good coloring of $\dom(p) \cup \dom(q)$.
Take $\alpha, \beta \in \dom(p) \cup \dom(q)$ with $\alpha \relE \beta$.
We may assume $\alpha<\beta$.
If $\beta \in \dom(p)$, then $\alpha \in \calE^\beta \cap \beta \subseteq \dom(p)$.
Hence $r(\alpha)=p(\alpha) \neq p(\beta)=r(\beta)$.
The case $\beta \in \dom(q)$ follows from the same argument.

(2). For a given $\{p_i\mid i<\om_2\} \subseteq \bbP$,
by the $\Delta$-system lemma,
there are $D \in [\om_2]^{\om_2}$ and
$d$ such that $\dom(p_i) \cap \dom(p_j)=d$ for every distinct $i, j \in D$.
For $\alpha \in d$ and $i \in D$,
we have $p_i(\alpha) \in F(\alpha) \in [\ka]^\om$.
Thus, by a standard  pigeonhole argument,
there is $D' \in [D]^{\om_2}$ such that
$p_i \restriction d=p_j \restriction d$ for every $i, j \in D'$.
Then for every $i, j \in D'$, $p_i \cup p_j$ is a common extension of $p_i$ and $p_j$
by (1).
\end{proof}

The following lemmas are straightforward.
\begin{lemma}\label{2.7}
For a descending sequence $\seq{p_n\mid n<\om}$ in $\bbP$,
if the set $\bigcup_{n<\om} \dom(p_n)$ is complete then
$\bigcup_{n<\om} p_n \in \bbP$.
In particular, for every countable $M \prec \calH_\theta$
and every $(M, \bbP)$-generic sequence $\seq{p_n\mid n<\om}$,
if $M \cap \ka$ is complete then
the union $\bigcup_{n<\om} p_n$ is a strong $(M, \bbP)$-generic condition with
domain $M \cap \ka$.
Hence $\bbP$ is $\sigma$-Baire.
\end{lemma}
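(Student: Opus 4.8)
The plan is to prove the three assertions of Lemma~\ref{2.7} in turn, each one feeding into the next, with all the real content already packaged in the completeness and density lemmas above.

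For the first claim, I would set $q=\bigcup_{n<\om}p_n$. Since the sequence is descending in the sense of Definition~\ref{2.4} (that is, $p_{n+1}\supseteq p_n$), the $p_n$ form a $\subseteq$-increasing chain of functions, so $q$ is a function with $\dom(q)=\bigcup_n\dom(p_n)\in[\ka]^\om$. Completeness of $\dom(q)$, which is clause~(2) of Definition~\ref{2.4}, is exactly the hypothesis. For clause~(1): given $\alpha\in\dom(q)$, pick $n$ with $\alpha\in\dom(p_n)$ to get $q(\alpha)=p_n(\alpha)\in F(\alpha)$; and if $\alpha\relE\beta$ with $\alpha,\beta\in\dom(q)$, choosing $n$ so large that $\alpha,\beta\in\dom(p_n)$ gives $q(\alpha)=p_n(\alpha)\neq p_n(\beta)=q(\beta)$, since $p_n$ is a good coloring. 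Hence $q\in\bbP$, and visibly $q\le p_n$ for every $n$. (It is worth noting that the hypothesis is genuinely needed: a countable increasing union of complete sets need not satisfy clause~(2) of Definition~\ref{2.2}, since an infinite $\calE^\beta\cap\dom(q)$ may be spread across infinitely many of the $\dom(p_n)$.)

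For the second (``in particular'') claim, let $M\prec\calH_\theta$ be countable with $M\cap\ka$ complete, and let $\seq{p_n\mid n<\om}$ be an $(M,\bbP)$-generic sequence. First I would identify $\bigcup_n\dom(p_n)=M\cap\ka$: the inclusion $\subseteq$ holds because each $p_n\in M$ and $\dom(p_n)$ is a countable set lying in $M$, so $\dom(p_n)\subseteq M\cap\ka$; for $\supseteq$, given $\alpha\in M\cap\ka$ the set $D_\alpha=\{r\in\bbP\mid\alpha\in\dom(r)\}$ is dense open by Lemma~\ref{2.5}(2) and belongs to $M$ (as $\alpha\in M$), so by genericity some $p_n\in D_\alpha$, i.e.\ $\alpha\in\dom(p_n)$. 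Thus $\bigcup_n\dom(p_n)=M\cap\ka$ is complete, and the first claim yields $\bigcup_n p_n\in\bbP$ with domain $M\cap\ka$. Being a lower bound of the $(M,\bbP)$-generic sequence, it is a strong $(M,\bbP)$-generic condition by the remark following the definition of generic sequences.

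For $\sigma$-Baireness it suffices, given $p\in\bbP$ and a countable family $\seq{D_n\mid n<\om}$ of dense open subsets of $\bbP$, to find $q\le p$ with $q\in\bigcap_n D_n$. The one step requiring care is to produce a countable $M\prec\calH_\theta$ containing $p,\seq{D_n\mid n<\om},X,F$ with $M\cap\ka$ complete, and this is precisely what Lemma~\ref{2.3+} provides: the countable complete subsets of $\ka$ are stationary in $[\ka]^\om$, while the sets $M\cap\ka$ for countable $M\prec\calH_\theta$ containing the parameters contain a club in $[\ka]^\om$, so the two meet. Inside such an $M$ I would enumerate the countably many dense open subsets of $\bbP$ lying in $M$ and, starting from $p_0=p$, recursively choose $p_{k+1}\le p_k$ in the $k$-th such set using density and elementarity of $M$; this gives an $(M,\bbP)$-generic sequence below $p$. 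By the second claim, $q=\bigcup_n p_n\in\bbP$ and $q\le p$. Finally, each $D_n\in M$ is dense open, so some $p_m\in D_n$, and $q\le p_m$ with $D_n$ open forces $q\in D_n$; hence $q\in\bigcap_n D_n$, as desired. The main obstacle is thus the existence of the model with $M\cap\ka$ complete, which Lemma~\ref{2.3+} is designed to settle; everything else is routine verification.
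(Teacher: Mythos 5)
Your proof is correct and is exactly the routine verification that the paper leaves implicit (the lemma is stated without proof as ``straightforward''): the union of a descending chain is a condition once its domain is complete, genericity together with Lemma~\ref{2.5}(2) identifies the domain of the union with $M\cap\ka$, and the existence of a countable $M\prec\calH_\theta$ containing the parameters with $M\cap\ka$ complete follows from the stationarity clause of Lemma~\ref{2.3+} intersected with the club of traces $M\cap\ka$. Your parenthetical observation that completeness of the union really is a hypothesis (clause~(2) of Definition~\ref{2.2} is not preserved under countable increasing unions) is also accurate and worth keeping.
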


\begin{lemma}\label{2.11}
\begin{enumerate}
\item $\bbP$ preserves all cofinalities.
\item Let $G$ be a $(V, \bbP)$-generic filter.
Then $f=\bigcup G$ is a good coloring of $X$ with
$f(\alpha) \in F(\alpha)$ for every $\alpha<\ka$.
\end{enumerate}
\end{lemma}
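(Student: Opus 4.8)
The plan is to derive part (1) directly from the two structural properties of $\bbP$ already established, and to obtain part (2) as a routine density-and-genericity argument using the lemmas proved above.

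For part (1), the idea is to split the regular cardinals into those $\le \om_1$ and those $\ge \om_2$, and to invoke the two preservation regimes separately. By Lemma \ref{2.7}, $\bbP$ is $\sigma$-Baire, so it adds no new countable sequences of ordinals; this preserves $\om_1$ as a cardinal and preserves every cofinality $\le \om_1$. Indeed, if $\cf(\gamma)=\om$ in $V$ then a $V$-cofinal $\om$-sequence remains cofinal, while no ordinal of uncountable $V$-cofinality can acquire cofinality $\om$ in $V[G]$, as this would require a new countable cofinal sequence; and if $\cf(\gamma)=\om_1$ in $V$, then since $\om_1$ is preserved a $V$-cofinal sequence of length $\om_1$ still witnesses $\cf^{V[G]}(\gamma)\le\om_1$, while $\cf^{V[G]}(\gamma)=\om$ is excluded as before. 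By Lemma \ref{2.6}, $\bbP$ satisfies the $\om_2$-c.c., so it preserves all cardinals and cofinalities $\ge\om_2$. Since every regular cardinal is either $\le\om_1$ or $\ge\om_2$, combining the two regimes shows that $\cf^V(\gamma)=\cf^{V[G]}(\gamma)$ for every limit ordinal $\gamma$, which is exactly cofinality preservation.

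For part (2), first I would check that $f=\bigcup G$ is a function: any two conditions $p,q\in G$ are compatible, so by Lemma \ref{2.6}(1) their union $p\cup q$ is a function, whence $p$ and $q$ agree on $\dom(p)\cap\dom(q)$. Next, $f$ is total on $\ka$: for each $\alpha<\ka$, applying Lemma \ref{2.5}(2) with $x=\{\alpha\}$ shows that $\{p\in\bbP\mid \alpha\in\dom(p)\}$ is dense, so genericity gives some $p\in G$ with $\alpha\in\dom(p)$. Since each condition satisfies $p(\beta)\in F(\beta)$ on its domain, we immediately get $f(\alpha)\in F(\alpha)$ for all $\alpha<\ka$, and in particular $f$ maps into $\ka$. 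Finally, to see that $f$ is good, take $\alpha\relE\beta$; choosing $p,q\in G$ with $\alpha\in\dom(p)$ and $\beta\in\dom(q)$ and a common extension $r\in G$, both $\alpha,\beta$ lie in $\dom(r)$, and since $r$ is a good coloring of the induced subgraph on $\dom(r)$ we obtain $f(\alpha)=r(\alpha)\ne r(\beta)=f(\beta)$.

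Neither part presents a serious obstacle, since the combinatorial work has already been absorbed into Lemmas \ref{2.5}, \ref{2.6}, and \ref{2.7}. The only point requiring a little care is the preservation of cofinality $\om_1$ in part (1), where one must combine the preservation of $\om_1$ with the absence of new countable sequences to rule out a spurious drop of $\cf^{V[G]}(\gamma)$ to $\om$; this is where I would be most explicit.
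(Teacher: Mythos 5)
Your proof is correct and is exactly the routine argument the paper has in mind: the author states Lemma \ref{2.11} without proof, calling it straightforward, and your combination of $\sigma$-Baireness (Lemma \ref{2.7}) for cofinalities $\le\om_1$ with the $\om_2$-c.c.\ (Lemma \ref{2.6}) for cofinalities $\ge\om_2$, together with the density/compatibility argument for part (2), is the intended one. The only cosmetic point is that Lemma \ref{2.5}(2) is stated for $x\in[\ka]^\om$ (countably infinite sets), so one should apply it to some infinite $x\ni\alpha$ rather than to $x=\{\alpha\}$; and the fact that $p\cup q$ is a function for $p,q\in G$ follows directly from $G$ being a filter (a common extension in $G$ is a function extending both), rather than from Lemma \ref{2.6}(1).
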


We do not know if the poset $\bbP$ is proper or even semiproper.
\begin{question}
Is $\bbP$ proper or semiproper?
\end{question}

Next let us consider complete suborders of $\bbP$.

\begin{define}
For a subset $Y \subseteq \ka$,
let $\bbP \restr Y=\{p \in \bbP \mid \dom(p) \subseteq Y\}$.
We identify $\bbP \restr Y$ as a suborder of $\bbP$.
\end{define}

\begin{lemma}\label{2.8}
Let $Y \subseteq \ka$ be a complete set of size $\om_1$.
\begin{enumerate}
\item The poset $\bbP \restr Y$ is a complete suborder of $\bbP$.
Moreover, for each $p \in \bbP$, the function $p\restr Y$ is in $\bbP \restr Y$ and is the greatest reduction of $p$.
\item Let $M \prec \calH_\theta$ be a countable model with $\bbP, Y \in M$.
For every $(M, \bbP \restr Y)$-generic sequence $\seq{p_n\mid n<\om}$,
the union $\bigcup_{n<\om} p_n$ is a strong $(M, \bbP \restr Y)$-generic condition.
\end{enumerate}
\end{lemma}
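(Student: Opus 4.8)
The plan is to handle the two parts separately, leaning on Lemmas \ref{2.6}, \ref{2.5}, \ref{2.3+}, and \ref{2.7}. For part (1), I would first check that $p \restr Y \in \bbP \restr Y$: its domain $\dom(p) \cap Y$ is an intersection of two complete sets, hence complete (as noted after Definition \ref{2.2}), and the coloring requirements in Definition \ref{2.4} are inherited by restriction, so $p \restr Y$ is a condition with domain inside $Y$. Next I would verify that $p \restr Y$ is a reduction of $p$: for any $r \le p \restr Y$ in $\bbP \restr Y$, the overlap $\dom(r) \cap \dom(p)$ lies in $Y \cap \dom(p) = \dom(p \restr Y)$, where $r$ agrees with $p \restr Y$ and hence with $p$; thus $r \cup p$ is a function, so by Lemma \ref{2.6}(1) it is a common extension and $r$ is compatible with $p$. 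This gives condition (ii) of a complete suborder, and condition (i) follows because any two conditions in $\bbP \restr Y$ that are compatible in $\bbP$ have a union that is a function (Lemma \ref{2.6}(1)) with domain contained in $Y$, hence a common extension already in $\bbP \restr Y$. Therefore $\bbP \restr Y$ is a complete suborder of $\bbP$.

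The crux of part (1) is that $p \restr Y$ is the \emph{greatest} reduction, i.e.\ every reduction $s \in \bbP \restr Y$ of $p$ satisfies $s \supseteq p \restr Y$. I would argue the contrapositive: if $s \not\supseteq p \restr Y$ I construct $r \le s$ in $\bbP \restr Y$ incompatible with $p$. If $s$ and $p$ conflict at some $\alpha \in \dom(s) \cap \dom(p) \cap Y$, then $s$ itself is incompatible with $p$, since a common extension would assign two values at $\alpha$. Otherwise there is $\alpha \in (\dom(p) \cap Y) \setminus \dom(s)$, and I would extend $s$ to a condition $r$ whose domain is a countable complete set $x$ with $\dom(s) \cup \{\alpha\} \subseteq x \subseteq Y$, choosing the color of $\alpha$ to avoid both the finitely many forbidden neighbor-colors and the single value $p(\alpha)$, which is possible because $F(\alpha)$ is infinite. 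Such an $x$ exists by taking $x = M' \cap Y$ for a countable $M' \prec \calH_\theta$ with $X, F, Y, s, \alpha \in M'$ (complete by Lemma \ref{2.3+}) and then applying Lemma \ref{2.5}(1). Then $r(\alpha) \neq p(\alpha)$, so $r \bot p$, witnessing that $s$ is not a reduction.

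For part (2), I would mimic Lemma \ref{2.7} inside $\bbP \restr Y$. Since $X$ and $F$ are definable from $\bbP$, elementarity yields $X, F \in M$, so $M \cap Y$ is complete by Lemma \ref{2.3+}. The key equality is $\bigcup_{n<\om} \dom(p_n) = M \cap Y$: the inclusion $\subseteq$ holds because each $p_n \in M$ is countable, and for $\supseteq$, for each $\alpha \in M \cap Y$ the set $D_\alpha = \{p \in \bbP \restr Y \mid \alpha \in \dom(p)\}$ lies in $M$ and is dense in $\bbP \restr Y$ (given $p$ and $\alpha \in Y$, a complete $M'' \cap Y$ for a suitable countable $M'' \ni p, \alpha$ together with Lemma \ref{2.5}(1) produces the desired extension), so genericity forces $\alpha \in \dom(p_n)$ for some $n$. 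As $\bigcup_n \dom(p_n) = M \cap Y$ is complete and contained in $Y$, Lemma \ref{2.7} shows $\bigcup_n p_n \in \bbP$, hence $\bigcup_n p_n \in \bbP \restr Y$; being a lower bound of an $(M, \bbP \restr Y)$-generic sequence, it is a strong $(M, \bbP \restr Y)$-generic condition.

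I expect the main obstacle to be the greatest-reduction step in part (1): the remaining claims reduce cleanly to the already-established Lemmas \ref{2.6}, \ref{2.5}, \ref{2.3+}, and \ref{2.7}, whereas pinning down $p \restr Y$ as the weakest reduction requires the delicate construction, staying inside $Y$, of an extension that recolors a single omitted vertex away from the value of $p$ while keeping the domain countable and complete.
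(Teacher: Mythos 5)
Your proof is correct and follows essentially the same route as the paper: $p\restr Y$ is a condition because $\dom(p)\cap Y$ is an intersection of complete sets, reductions and compatibility come from Lemma \ref{2.6}, and part (2) combines Lemma \ref{2.3+} with Lemma \ref{2.7}. The paper dismisses the greatest-reduction claim and the equality $\bigcup_n\dom(p_n)=M\cap Y$ as straightforward, whereas you supply the details (the recoloring of an omitted vertex away from $p(\alpha)$, and the density of $\{p\in\bbP\restr Y\mid \alpha\in\dom(p)\}$), and these details are right.
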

\begin{proof}
(1). For $p, q \in  \bbP \restr Y$,
if $p$ is compatible with $q$ in $\bbP$,
then $p \cup q$ is a common extension of $p$ and $q$, and $\dom(p \cup q) \subseteq Y$.
Hence $p \cup q \in \bbP \restr Y$,
and $p$ is compatible with $q$ in $\bbP \restr Y$.

Next take $p \in \bbP$. 
The sets $\dom(p)$ and $Y$ are complete, hence $\dom(p) \cap Y$ is complete as well.
Because $\dom(p \restr Y)=\dom(p) \cap Y$, we have $p \restr Y \in \bbP \restr Y$.
We show $p \restr Y$ is a reduction of $p$, that is, for every $r \in \bbP \restr Y$,
if $r \le  p \restr Y$ then $r$ is compatible with $p$ in $\bbP$,
and this is immediate from Lemma \ref{2.6}, and it is straightforward to check that
$p \rest Y$ is the greatest reduction of $p$.

(2). By Lemma \ref{2.3+}, $M \cap Y$ is complete.
Since $\dom(\bigcup_{n<\om} p_n)=M \cap Y$,
the union $\bigcup_{n<\om} p_n$  is a condition in $\bbP \restr Y$.
\end{proof}

\begin{lemma}
Let $N \prec \calH_\theta$ be a $\sigma$-closed model with $\size{N}=\om_1$ and $\bbP \in N$.
Then $\bbP \rest (N \cap \ka)=\bbP \cap N$ and $\bbP \cap N$ is a complete suborder of $\bbP$.
\end{lemma}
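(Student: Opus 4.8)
The plan is to first establish the set equality $\bbP \restr (N\cap\ka) = \bbP \cap N$, and then read off the complete-suborder claim directly from Lemma \ref{2.8}(1). Two features of the $\sigma$-closed elementary submodel $N$ will do all the work. First, by elementarity any countable set that is an \emph{element} of $N$ is in fact a \emph{subset} of $N$: since $\om \subseteq N$ and a surjection $g\colon\om\to a$ witnessing the countability of $a$ can be chosen in $N$, each $g(n)\in N$, so $a\subseteq N$. Second, $\sigma$-closure gives $[N]^\om \subseteq N$, so conversely any countable subset of $N$ is an element of $N$. I also note that $X$ and $F$ belong to $N$, as they are definable from the parameter $\bbP\in N$.

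For the inclusion $\bbP \cap N \subseteq \bbP \restr (N\cap\ka)$, take $p\in\bbP\cap N$. Then $\dom(p)\in N$ by elementarity and $\dom(p)$ is countable, so $\dom(p)\subseteq N$ by the first fact, whence $\dom(p)\subseteq N\cap\ka$ and $p\in\bbP\restr(N\cap\ka)$. For the reverse inclusion, take $p\in\bbP$ with $\dom(p)\subseteq N\cap\ka$; I will show $p\in N$. For each $\alpha\in\dom(p)\subseteq N$ we have $F(\alpha)\in N$ (since $F\in N$), and $F(\alpha)\in[\ka]^\om$ is countable, so $F(\alpha)\subseteq N$; hence $p(\alpha)\in F(\alpha)\subseteq N$. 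Thus both the domain and range of $p$ lie in $N$, so $p$, as a countable set of ordered pairs of elements of $N$, is a countable subset of $N$, and therefore $p\in N$ by $\sigma$-closure. This proves $\bbP\restr(N\cap\ka)=\bbP\cap N$.

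To apply Lemma \ref{2.8}(1) it remains to see that $N\cap\ka$ is a complete set of size $\om_1$. Completeness is exactly Lemma \ref{2.3}, since $N$ is $\sigma$-closed with $X,F\in N$ and $\size{N}=\om_1$. For the size, I first check $\om_1\subseteq N$: if some $\gamma<\om_1$ were the least ordinal not in $N$, then $\gamma\subseteq N$ would be a countable subset of $N$, so $\gamma\in N$ by $\sigma$-closure, a contradiction. Hence $\om_1\subseteq N$, and since $\ka\ge\om_2$ and $\size{N}=\om_1$ we get $\size{N\cap\ka}=\om_1$. Now Lemma \ref{2.8}(1) with $Y=N\cap\ka$ shows $\bbP\restr(N\cap\ka)$ is a complete suborder of $\bbP$, and by the equality just proved the same holds for $\bbP\cap N$.

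The only genuinely delicate step is the range computation in the reverse inclusion, where one must verify that the \emph{values} $p(\alpha)$, not merely the domain, lie in $N$; this is precisely where the membership $F\in N$ together with the ``countable-member-implies-subset'' principle is needed. Everything else is a routine interplay of elementarity and $\sigma$-closure with the previously established Lemmas \ref{2.3} and \ref{2.8}.
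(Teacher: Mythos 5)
Your proof is correct and follows essentially the same route as the paper: both directions of the equality $\bbP\rest(N\cap\ka)=\bbP\cap N$ are obtained from the interplay of elementarity (countable elements of $N$ are subsets of $N$) and $\sigma$-closure (countable subsets of $N$ are elements of $N$), and the complete-suborder claim is then read off from Lemma \ref{2.3} together with Lemma \ref{2.8}(1), exactly as in the paper. Your explicit treatment of why the values $p(\alpha)$ lie in $N$ via $F(\alpha)\in N$ is the same observation the paper makes when it writes $p\subseteq\dom(p)\times\{F(\alpha)\mid\alpha\in\dom(p)\}\subseteq N$.
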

\begin{proof}
The inclusion $\bbP \cap N \subseteq \bbP \rest (N \cap \ka)$ is easy.
For the converse,
let $p \in \bbP \rest (N \cap \ka)$.
We know $\dom(p) \subseteq N \cap \ka$. 
Because $p \subseteq \dom(p) \times \{F(\alpha) \mid \alpha \in \dom(p)\} \subseteq N$,
we have $p \in N$ by the $\sigma$-closure of $N$.

The set $N \cap \ka$ is complete by Lemma \ref{2.3}, 
hence $\bbP \rest (N \cap \ka)=\bbP \cap N$ is a complete suborder of $\bbP$.
\end{proof}

Let us say that a poset $\bbQ$ is 
\emph{$\om_1$-diamond preserving} 
if for every stationary $S\subseteq \omega_1$ and
$\diamondsuit(S)$-sequence $\seq{d_\alpha \mid \alpha \in S}$,
$\bbQ$ forces 
``$\seq{d_\alpha \mid \alpha \in S}$ remains a $\diamondsuit(S)$-sequence''.
\begin{lemma}\label{5.21}
\begin{enumerate}
\item $\bbP$ is $\om_1$-stationary preserving
and $\om_1$-diamond preserving.
\item For every complete set $Y \subseteq \ka$,
the quotient $\bbP/(\bbP \restr Y)$ is $\om_1$-stationary preserving.
\end{enumerate}
\end{lemma}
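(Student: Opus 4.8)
The plan is to prove both stationary-preservation and diamond-preservation by exhibiting, for any countable elementary submodel $M$ and any complete set lying in $M$, a strong generic condition — this is exactly the content already packaged in Lemmas \ref{2.7} and \ref{2.8}. For part (1), I would fix a $\bbP$-name $\dot{C}$ for a club in $\om_1$ and a stationary $S \subseteq \om_1$, together with a condition $p$ forcing $\dot{C}$ to be club. I would then take a countable $M \prec \calH_\theta$ containing $\bbP, \dot{C}, p, S$ (and all the data $X, F$) with $\delta = M \cap \om_1 \in S$; such $M$ exist by stationarity of $S$. The key point is that $M \cap \ka$ is complete: this follows by intersecting with a $\sigma$-closed $N$ of size $\om_1$ from Lemma \ref{2.3} and then applying Lemma \ref{2.3+}, or directly by the stationarity of complete countable sets established in Lemma \ref{2.3+}. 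Once $M \cap \ka$ is complete, any $(M,\bbP)$-generic sequence below $p$ has its union $q = \bigcup_n p_n$ as a strong $(M,\bbP)$-generic condition by Lemma \ref{2.7}. Strong genericity gives $M[\dot G] \cap \om_1 = M \cap \om_1 = \delta$ in the extension, so $q$ forces $\delta \in \dot{C}$, hence $q \Vdash \dot{C} \cap S \neq \emptyset$. Since $p$ and $\dot{C}$ were arbitrary, $S$ stays stationary.

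For the diamond-preservation half, the same strong generic condition does the work. Given a $\diamondsuit(S)$-sequence $\seq{d_\alpha \mid \alpha \in S}$, a $\bbP$-name $\dot A$ for a subset of $\om_1$, and a $\bbP$-name $\dot D$ for a club, I would choose $M$ containing all these names with $\delta = M \cap \om_1 \in S$. Building a strong generic condition $q$ with domain $M \cap \ka$ as above, I get $M[\dot G] \cap \om_1 = \delta$, so $\dot A_G \cap \delta = \dot A \cap \delta$ is an element of $M[\dot G]$ and is therefore decided by the generic inside $M$; strong genericity forces $\dot A_G \cap \delta \in M$, and since $M \cap \om_1 = \delta$ the value $\dot A_G \cap \delta$ is read off correctly so that $q$ forces $\delta \in \dot D$ and $d_\delta = \dot A_G \cap \delta$. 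This shows $q \Vdash (\exists \alpha \in S)(\alpha \in \dot D \wedge d_\alpha = \dot A \cap \alpha)$, i.e. the guessing survives on a dense set of conditions, so the sequence remains a $\diamondsuit(S)$-sequence.

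For part (2), the argument is structurally identical but carried out over the quotient $\bbP/(\bbP \restr Y)$. Working in $V[G_Y]$ where $G_Y$ is $(V, \bbP\restr Y)$-generic, I would take $M \prec \calH_\theta^{V[G_Y]}$ with $\delta = M \cap \om_1 \in S$ and with $\bbP, Y, G_Y$ and the relevant quotient names in $M$. The completeness of $\bbP \restr Y$ in $\bbP$ (Lemma \ref{2.8}(1)) together with the fact that $p \restr Y$ is the greatest reduction of $p$ lets me interleave a $(M \cap V, \bbP \restr Y)$-generic sequence with conditions from the quotient so that the full union is a condition of $\bbP$ with domain $M \cap \ka$ extending a given condition of $\bbP/(\bbP\restr Y)$; completeness of $M \cap \ka$ again guarantees this union is a genuine condition by Lemma \ref{2.7}. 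That union then serves as the strong generic condition placing $\delta$ into any prescribed club.

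The step I expect to be the main obstacle is the quotient part (2): one must verify that a strong generic condition for $\bbP$ remains a condition of the quotient $\bbP/(\bbP \restr Y)$ and correctly reflects the computation of $M[\dot G] \cap \om_1$ relative to the intermediate model $V[G_Y]$. The subtlety is that the countable $M$ now lives over $V[G_Y]$, so I must ensure $M \cap \ka$ is still $\seq{X,F}$-complete and that the generic sequence I build stays compatible with $G_Y$; here the crucial facts are that completeness of a set is absolute (it is defined purely from $X$ and $F$, which are ground-model objects) and that $\bbP \restr Y$ adds no new countable subsets of $\ka$ below $Y$, so the completeness witnesses for $M \cap \ka$ computed in $V$ remain valid. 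Granting these absoluteness points, the strong-genericity machinery of Lemmas \ref{2.7} and \ref{2.8} transfers verbatim to the quotient.
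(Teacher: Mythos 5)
Your overall strategy --- producing strong generic conditions via Lemmas \ref{2.7} and \ref{2.8} --- is the right machinery, but there are two genuine gaps. First, you claim that $M\cap\ka$ is complete for your chosen countable $M\prec\calH_\theta$, citing Lemmas \ref{2.3} and \ref{2.3+}. Neither gives this: Lemma \ref{2.3} requires $N$ to be $\sigma$-closed of size $\om_1$, and Lemma \ref{2.3+} only yields that $M\cap Y$ is complete for a complete $Y\in M$ of size $\om_1$; it says nothing about $M\cap\ka$ itself, and $M\cap\ka$ need not be complete for a countable $M$ (a countable subset of $M\cap\ka$ need not be an element of $M$, and since the niceness hypothesis only gives $\col(X)\le\om_1$ rather than $\col(X)\le\om$, the finite-$f(\beta)$ argument of Lemma \ref{2.3+} breaks down at the level of the whole graph). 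Without completeness of $M\cap\ka$, the union $\bigcup_n p_n$ of an $(M,\bbP)$-generic sequence need not be a condition of $\bbP$ at all, since genericity forces its domain to be all of $M\cap\ka$. The paper's fix is essential and you should build both halves of (1), and the quotient argument in (2), around it: by the $\om_2$-c.c., any name for a subset of $\om_1$ is a $\bbP\restr Y$-name for some complete $Y$ of size $\om_1$, so it suffices to prove the preservation statements for $\bbP\restr Y$ (respectively for $(\bbP\restr Z)/(\bbP\restr Y)$ with $Y\subseteq Z$ complete of size $\om_1$), where $M\cap Y$ is complete by Lemma \ref{2.3+} and Lemma \ref{2.8}(2) applies.

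Second, your diamond-preservation argument cannot work with a single countable model. A strong $(M,\bbP\restr Y)$-generic condition $q$ with $\delta=M\cap\om_1\in S$ does decide $\dot A\cap\delta$ as some ground-model subset of $\delta$, but nothing forces that set to equal the prescribed guess $d_\delta$; when you write that ``the value $\dot A_G\cap\delta$ is read off correctly'' you are asserting the conclusion. The guess $d_\delta$ is fixed in advance, while the decided value of $\dot A\cap\delta$ depends on $\delta$ and on the conditions you built. The paper instead constructs an internally approachable chain $\seq{M_i\mid i<\om_1}$ together with a descending sequence of strong generic conditions $p_i$, forms the ground-model set $A=\{\gamma<\om_1\mid\exists i\,(p_i\Vdash\gamma\in\dot A)\}$, and only then applies the ground-model $\diamondsuit(S)$-sequence to find $\alpha\in S$ with $d_\alpha=A\cap\alpha$ and $M_\alpha\cap\om_1=\alpha$; strong genericity of the $p_j$ for $j<\alpha$ then gives $p_\alpha\Vdash d_\alpha=\dot A\cap\alpha$. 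This length-$\om_1$ construction is the missing idea, and it is also needed to see why a \emph{dense} set of conditions witnesses the guessing. Your sketch of (2) is closer in spirit to the paper's (the key points being that the countable generic sequence lies in $V$ because $\bbP\restr Y$ is $\sigma$-Baire, and that the union restricted to $Y$ equals $\bigcup_n(p_n\restr Y)\in G$), but it inherits the first gap: the union must be shown to be a condition of $\bbP\restr Z$ using completeness of $M\cap Z$, not of $M\cap\ka$.
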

\begin{proof}
First we note the following:
Let $A \in V^{\bbP}$ be a subset of $\om_1$,
and $\dot A$ be a name for $A$.
Since $\bbP$ has the $\om_2$-c.c.,
we can take a complete set $Y \subseteq \ka$ with size $\om_1$
such that $\dot A$ is a $\bbP \restr Y$-name.
Thus, in order to show that $\bbP$ satisfies (1),
it is enough to prove that 
for every complete set $Y \subseteq \ka$ with size $\om_1$,
the complete suborder $\bbP \restr Y$ satisfies (1).
We only show that $\bbP \rest Y$ is $\om_1$-diamond preserving,
the $\om_1$-stationary preservingness follows from the same argument.

Fix a stationary $S \subseteq \om_1$ and a $\diamondsuit(S)$-sequence $\seq{d_\alpha \mid \alpha \in S}$.
Take a $\bbP \rest Y$-name $\dot A$ for a subset of $\om_1$.
Then take an internally approachable sequence $\seq{M_i \mid i<\om_1}$ of countable elementary submodels of $\calH_\theta$
containing all relevant objects,
that is, $\seq{M_i \mid i \le j} \in M_{j+1}$ for every $j<\om_1$,
and $M_j=\bigcup_{i<j} M_i$ if $j$ is limit.
By Lemma \ref{2.8}, we can construct a descending sequence $\seq{p_i \mid i<\om_1}$ in $\bbP$
such that each $p_i$ is a strong $(M_i, \bbP \rest Y)$-generic condition,
and $\seq{p_i \mid i \le j} \in M_{j+1}$.
Let $A=\{\gamma<\om_1 \mid \exists i<\om_1\,(p_i \Vdash_{\bbP \rest Y} \gamma \in \dot A)\}$.
Since $\seq{d_\alpha \mid \alpha \in S}$ is a $\diamondsuit(S)$-sequence,
there is some $\alpha \in S$ such that
$d_\alpha=A \cap \alpha$ and $M_\alpha \cap \om_1=\alpha$.
We show that
$p_\alpha \Vdash_{\bbP\rest Y}$``$d_\alpha=\dot A \cap \alpha$''.
Let $\gamma \in d_\alpha$. Since $A \cap \alpha=d_\alpha$,
there is some $i<\om_1$ such that $p_i \Vdash_{\bbP \rest Y} \gamma \in \dot A$.
Pick $j<\alpha$ with $\gamma \in M_j$. Note that $p_\alpha \le p_j$.
Since $p_j$ is a strong $(M_j, \bbP\restriction Y)$-generic condition,
we know $p_j \Vdash_{\bbP \rest Y} \gamma \in \dot A$,  or 
$p_j \Vdash_{\bbP \rest Y} \gamma \notin \dot A$.
$p_i$ is compatible with $p_j$, so we have $p_j \Vdash_{\bbP \rest Y} \gamma \in \dot A$,
and $p_\alpha \Vdash_{\bbP \rest Y} \gamma \in \dot A$.
This means that $p_\alpha \Vdash_{\bbP \rest Y}$``$d_\alpha \subseteq  \dot A \cap \alpha$''.
For the converse, let $\gamma<\alpha$ and suppose $\gamma \notin d_\alpha$.
Pick $j<\alpha$ with $\gamma \in M_j$. 
Since $\gamma \notin d_\alpha=A \cap \alpha$,
$p_j$ does not force $\gamma \in \dot A$.
Again, since $p_j$ is strong $(M_j, \bbP\rest Y)$-generic,
we have that  $p_j$ forces $\gamma \notin \dot A$,
and $p_\alpha \Vdash_{\bbP \rest Y} \gamma \notin \dot A$.
This means that $p_\alpha \Vdash_{\bbP \rest Y}$``$\dot A \cap \alpha \subseteq d_\alpha$''.  

For (2), take another complete set $Z \subseteq \ka$ with size $\om_1$ and $Y \subseteq Z$.
$\bbP \restr Y$ is a complete suborder of $\bbP \restr Z$.
By the same reason as before, it is enough to show that
$(\bbP \rest Z)/(\bbP \rest Y)$ is $\om_1$-stationary preserving.

Take a $(V, \bbP \rest Y)$-generic $G$, and we work in $V[G]$.
Fix a stationary set $S \subseteq \om_1$.
Take a countable  elementary submodel $M' \prec \calH_\theta^{V[G]}$
such that $M' \cap \om_1 \in S$,
and an $(M', (\bbP\rest Z)/G)$-generic sequence $\seq{p_n \mid n<\om}$.
It is enough to prove that this sequence has a lower bound in $(\bbP \rest Z)/G$.

Here note that $p_n \rest Y \in G$ for every $n<\om$.
Let $M=M' \cap \calH_\theta^V$, which is an elementary submodel of $\calH_\theta^V$ in $V$,
and $\seq{p_n \mid n<\om}$ is an $(M, \bbP\rest Z)$-generic sequence belongs to $V$.
By Lemma \ref{2.8} again, the union $p=\bigcup_n p_n$ is in $\bbP \rest Z$.
Moreover $p\rest Y=\bigcup_n (p_n \rest Y)$, which belongs to $G$.
Hence $p \in (\bbP \rest Z)/G$, as required.
\end{proof}

If $X$ is a trivial graph on $\ka$ (that is, $X=\seq{\ka, \emptyset})$ and $F(\alpha)=\om$,
then it is clear that $\bbP(X, F)$ is isomorphic to
$\mathrm{Fn}(\ka, \om, <\om_1)$.
It is known that 
for every stationary $S \subseteq \om_1$,
$\mathrm{Fn}(\ka, \om, <\om_1)$ forces $\diamondsuit(S)$.
Thus we have:

\begin{lemma}\label{2.10}
If $X$ is a trivial graph and
$F(\alpha)=\om$ for $\alpha<\ka$, then
for every stationary $S \subseteq \om_1$,
$\bbP(X, F)$ forces $\diamondsuit(S)$.
\end{lemma}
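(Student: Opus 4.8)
The plan is to reduce $\bbP(X,F)$ to the Cohen-type poset $\mathrm{Fn}(\ka,\om,<\om_1)$ and then appeal to the classical fact about the latter. First I would unwind the two defining clauses of $\bbP(X,F)$ under the present hypotheses. Since $X=\seq{\ka,\emptyset}$ has no edges, $\calE^\alpha=\emptyset$ for every $\alpha$, so the requirement in Definition \ref{2.4}(1) that $p$ be a good coloring of the induced subgraph on $\dom(p)$ is vacuous: any function $p\colon\dom(p)\to\om$ qualifies. Moreover, with $F(\alpha)=\om$ for all $\alpha$, the completeness of Definition \ref{2.2} collapses: clause (2) is vacuous because $\calE^\beta\cap x=\emptyset$ is never infinite, and clause (1) reduces to $\om=F(\alpha)\subseteq x$ for $\alpha\in x$. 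Hence a set $x$ is complete exactly when $\om\subseteq x$, and the conditions of $\bbP(X,F)$ are precisely the countable partial functions $p\colon\ka\to\om$ whose domain contains $\om$.

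Next I would exhibit the dense embedding. The conditions of $\bbP(X,F)$ sit inside $\mathrm{Fn}(\ka,\om,<\om_1)$ as a suborder: the ordering in both posets is reverse inclusion, and for two such functions $p,q$, compatibility in either poset is equivalent to $p\cup q$ being a function (cf.\ Lemma \ref{2.6}), so the inclusion is order preserving and preserves incompatibility in both directions. To see the image is dense, take any $q\in\mathrm{Fn}(\ka,\om,<\om_1)$ and extend it by assigning arbitrary values on $\om\setminus\dom(q)$; the result lies in $\bbP(X,F)$ and refines $q$. Thus $\bbP(X,F)$ is a dense suborder of $\mathrm{Fn}(\ka,\om,<\om_1)$, and the two posets are forcing equivalent, which underlies the isomorphism asserted just before the lemma.

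Finally I would invoke the cited fact that $\mathrm{Fn}(\ka,\om,<\om_1)$ forces $\diamondsuit(S)$ for every stationary $S\subseteq\om_1$. Since forcing-equivalent posets yield the same generic extensions, $\bbP(X,F)$ forces the same statement, and we are done.

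The main point is that there is essentially no combinatorial difficulty in the lemma itself: all the genuine work is packaged in the quoted classical theorem that countably closed Cohen forcing at $\om_1$ adds a $\diamondsuit(S)$-sequence. If one wanted a self-contained argument, the real step would be to reprove that fact, using that $\mathrm{Fn}(\ka,\om,<\om_1)$ is $\sigma$-closed, hence preserves stationary subsets of $\om_1$ and adds no new $\om$-sequences, together with a standard elementary-submodel density argument forcing the decoded generic to guess $\dot A\cap\delta$ at stationarily many $\delta\in S$. On the present level, the only thing requiring care is the verification that the completeness requirement degenerates exactly to $\om\subseteq\dom(p)$, since this is precisely what legitimizes the identification of $\bbP(X,F)$ with $\mathrm{Fn}(\ka,\om,<\om_1)$.
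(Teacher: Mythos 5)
Your proposal is correct and follows the same route as the paper, which simply observes that for the trivial graph with $F(\alpha)=\om$ the poset $\bbP(X,F)$ is (forcing) isomorphic to $\mathrm{Fn}(\ka,\om,<\om_1)$ and then cites the known fact that the latter forces $\diamondsuit(S)$ for every stationary $S\subseteq\om_1$. You merely spell out the identification in more detail (the degeneration of completeness to $\om\subseteq\dom(p)$ and the dense embedding), which the paper leaves implicit.
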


Now we consider an iteration, this is the hard part of this section.
Let $l$ be an ordinal and $\seq{\bbP_\xi, \dot \bbQ_\eta\mid \eta \le \xi <l}$ a countable support iteration
satisfying the following induction hypotheses:
\begin{enumerate}
\item $\bbP_\xi$ satisfies the $\om_2$-c.c. and is $\sigma$-Baire.
\item $\bbP_\xi$ is $\om_1$-stationary preserving and $\om_1$-diamond preserving.
\item For $\xi<l$, there are $\bbP_\xi$-names $\dot X_\xi$ and $\dot F_\xi$ such that
$\Vdash_{\bbP_\xi}$``$\dot X_\xi$ is a $\ka$-nice graph,
$\dot F_\xi:\ka \to [\ka]^\om$, and $\dot \bbQ_\xi=\bbP(\dot X_\xi, \dot F_\xi)$''
\item For $\xi<l$, let $D_\xi$ be the set of all $p \in \bbP_\xi$
such that for every $\eta \in \supp(p)$ there is $r_\eta$ such that $p(\eta)$ is the canonical name of $r_\eta$.
Then $D_\xi$ is dense in $\bbP_\xi$.
\end{enumerate}
Let $\xi<l$ and $N \prec \calH_\theta$ be a $\sigma$-closed model
containing all relevant objects and $\size{N}=\om_1$.
Let $p \in D_\xi$. For $\eta \in \supp(p)$, let $r_\eta$ be the function such that
$p(\eta)$ is the canonical name of $r_\eta$.
Let $p^{N}$ be the function defined by $\dom(p^{N})=\xi$,
$\supp(p^{N})=\supp(p) \cap N$,
and for $\eta \in \supp(p^{N})$,
$p^{N}(\eta)$ is the canonical name of $r_\eta \rest (N \cap \ka)$.

Let $\seq{p_n\mid n<\om}$ be a descending sequence in $D_\xi$.
For $n<\om$ and $\eta \in \supp(p_n)$,
let $r_{n,\eta}$ be the function such that
$p_n(\eta)$ is the canonical name of $r_{n,\eta}$.
The \emph{canonical limit} of $\seq{p_n\mid n<\om}$ is 
the function $p$ defined by $\dom(p)=\xi$, 
$\supp(p)=\bigcup_{n<\om} \supp(p_n)$,
and for $\eta \in \supp(p)$, $p(\eta)$ is the canonical name
of $\bigcup\{r_{n,\eta}\mid n<\om, \eta \in \supp(p_n)\}$.
Notice that the canonical limit is not necessarily a condition.

Now we also require the following for the induction hypotheses:
\begin{enumerate}
\item[(5)] Let $\xi<l$ and $N \prec \calH_\theta$ be a $\sigma$-closed model
containing all relevant objects and $\size{N}=\om_1$.
\begin{enumerate}
\item$\bbP_\xi \cap N$ is a complete suborder of $\bbP_\xi$.
In addition, for $p \in D_\xi$, $p^N \in \bbP_\xi \cap N$ and is the greatest reduction of $p$.
\item The quotient $\bbP_\xi/(\bbP_\xi \cap N)$ is $\om_1$-stationary preserving.
\item Let $M \prec \calH_\theta$ be countable with $\bbP_\xi, N,\dotsc \in M$.
Then every $(M, \bbP_\xi \cap N)$-generic sequence in $D_\xi \cap N$ has a lower bound.
More precisely, let $\seq{p_n\mid n<\om}$ be an $(M, \bbP_\xi \cap N)$-generic sequence with $p_n \in D_\xi \cap N$.
Then the canonical limit is a condition in $\bbP_\xi \cap N$,
hence is a lower bound of $\seq{p_n \mid n<\om}$.
\end{enumerate}
\end{enumerate}

We define $\bbP_l$ as intended.
Now we verify that $\bbP_l$ satisfies the induction hypotheses.
\\

Case 1: $l$ is successor, say $l=k+1$.
(1)--(4) follow from the induction hypotheses and lemmas above.

(5). Fix a $\sigma$-closed  $N \prec \calH_\theta$ containing all relevant objects and $\size{N}=\om_1$.
As usual, we can identify $\bbP_l$ with $\bbP_k *\dot \bbQ_k$.
By (4), $D_l$ is dense in $\bbP_l$.

Before rendering the proof of (5), we give some observations:
Let $G$ be $(V, \bbP_k)$-generic, and
$G^*=G \cap N$ which is  $(V, \bbP_k \cap N)$-generic. 
Let $X_k$ and $F_k$ be the interpretations of $\dot X_k$ and $\dot F_k$ by $G$ respectively.
We know $N[G] \prec \calH_\theta^{V[G]}$. Since $\bbP_k$ satisfies the $\om_2$-c.c.\ and $N$ is $\sigma$-closed,
we have that $N[G]$ remains $\sigma$-closed and
$N[G] \cap \mathrm{ON}=N \cap \mathrm{ON}$.
Hence $N \cap \ka$ is $\seq{X_k, F_k}$-complete by Lemma \ref{2.3}.
For each $\alpha, \beta \in Y=N \cap \ka$,
there is a maximal antichain $I \in N$ in $\bbP_k$ which decides whether $\alpha \relE_k \beta$ or not.
$I \subseteq \bbP_k \cap N$, hence $I$ is a maximal antichain in $\bbP_k \cap N$.
Thus  we have that the induced subgraph $Y$ of $X_k$ lies in $V[G^*]$.
Similarly, we have $F_k \restriction Y \in V[G^*]$ and  $F_k(\alpha), \calE_k^\alpha \cap \alpha \subseteq Y$ for $\alpha 
\in Y$.
Moreover, for each countable $y \subseteq Y$, we know $y \in N[G]$, and
the set $\{\gamma<\ka\mid \size{\calE_k^\gamma \cap y} \ge \om\}$ is in $N[G]$.
Hence it is a subset of $N[G] \cap \ka=N\cap \ka=Y$ by Lemma \ref{2.1}.
Thus, for each $y \in [Y]^\om$,
$y$ is $(X_k, F_k)$-complete in $V[G]$ if and only if $y$ is $(Y, F_k \rest Y)$-complete in $V[G^*]$, i.e.,
$\calE^\alpha \cap \alpha, F_k(\alpha) \subseteq y$ for $\alpha \in y$, and
for every $\beta \in Y$, if $\calE_k^\beta \cap y$ is infinite then $\beta \in y$.
In addition, since $\col(Y) \le \om$ in $V[G]$ and
$\bbP_k/(\bbP_k \cap N)$ is $\om_1$-stationary preserving,
we have that $\col(Y) \le \om$ in $V[G^*]$ by
Lemma \ref{1.11}.

(a).
Let $p_0, p_1 \in \bbP_l \cap N$,
and suppose $p_0$ is incompatible with $p_1$ in $\bbP_l \cap N$.
We may assume that $p_0, p_1 \in D_l$,
so, for $i<2$, $p_i$ is of the form $\seq{q_i, r_i}$.

If $q_0$ is incompatible with $q_1$ in $\bbP_k \cap N$,
then these are incompatible in $\bbP_k$ by the induction hypotheses,
and so $\seq{q_0,  r_0}$ and $\seq{q_1, r_1}$ are incompatible in $\bbP_l$.
Suppose $q_0$ and $q_1$ are compatible. Let $q \in \bbP_k \cap N$ be a common extension.
Now, if $r_0 \cup r_1$ is a function,
then  we have that $\seq{q, (r_0 \cup r_1)} \in \bbP_l \cap N$ and is a common extension
of $\seq{q_0, r_0}$ and $\seq{q_1, r_1}$.
Thus  $\seq{q_0, r_0}$ and $\seq{q_1, r_1}$ are compatible in $\bbP_l \cap N$,
this is a contradiction.
Hence $r_0 \cup r_1$ is not a function, and we have that
$p_0$ and $p_1$ are incompatible in $\bbP_l$.

Next take  $p=\seq{q, r} \in D_l$.
We shall prove that $p^N \in N$ and is the greatest reduction of $p$.
Now,  we identify $p^{N}$ with $\seq{q^{N},  r\restriction (N \cap \ka)}$.
We have $q^N \in N$ by the induction hypotheses,
and $\dom(r \restr (N \cap \ka)) \in N$ by the $\sigma$-closure of $N$.
Since $\bbP_k$ satisfies the $\om_2$-c.c.,
we can find a set $A \subseteq \ka$ such that $A \in N$, 
$\size{A} \le \om_1$,
and $\Vdash_{\bbP_k}$``$\dot F_k``(\dom(r \restr (N \cap \ka))) \subseteq A$''.
We know $A \subseteq N$, and 
$r \rest (N \cap \ka) \subseteq (\dom(r) \cap N \cap \ka) \times A$
because $q \Vdash_{\bbP_k} r \in \dot \bbQ_k$.
Thus we have $r \rest (N \cap \ka) \in N$.

Next we show that $q^{N} \force_{\bbP_k}$``$ r \restriction (N \cap \ka) \in \dot \bbQ_k$''.
For this, first we show that
$q^{N} \force_{\bbP_k}$``$\dom(r \restriction (N \cap \ka))$ is $\seq{\dot X_k, \dot F_k}$-complete''.
If not, by the elementarity of $N$,
there is $q ' \le q^N$ such that $q' \in \bbP_k \cap N$
and
$q' \force_{\bbP_k}$``$\dom( r \restriction (N \cap \ka))$ is not $\seq{\dot X_k, \dot F_k}$-complete''.
$q'$ is compatible with $q$.
On the other hand, we know $q \Vdash_{\bbP_k}$``$\dom(r)$ is complete'' since $q$ forces $r \in \dot \bbQ_k$,
and, $\Vdash_{\bbP_k}$``$N \cap \ka$ is complete'' because of the observation before.
Hence $q \Vdash_{\bbP_k}$``$\dom(r \restriction (N \cap \ka))=\dom(r) \cap N \cap \ka$ is complete'',
this is a contradiction.
The same argument shows that
$q^{N} \force_{\bbP_k}$``$r \restriction (N \cap \ka)$ is a good coloring'',
thus we have
$q^{N} \force_{\bbP_k}$``$r \restriction (N \cap \ka) \in \dot \bbQ_k$''.
Finally we must show that $\seq{q^N, r \restriction (N \cap \ka)}$ is the greatest reduction,
but this can be verified by a standard argument.

(c). Take a countable $M \prec \calH_\theta$ containing all relevant objects.
First we prove the following claim:
\begin{claim}
For every strong $(M, \bbP_k \cap N)$-generic condition $p$,
$p\Vdash_{\bbP_k}$``$M \cap N \cap \ka$ is $\seq{\dot X_k, \dot F_k}$-complete''.
\end{claim}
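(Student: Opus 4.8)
The plan is to move into the generic extension by the complete suborder $\bbP_k \cap N$, where the strong genericity of $p$ can be exploited, and then to invoke the completeness observations established just above. Fix a strong $(M, \bbP_k \cap N)$-generic condition $p$; by definition $p \in \bbP_k \cap N$, so in particular $p \in N$. Let $G$ be any $(V, \bbP_k)$-generic filter with $p \in G$, and set $G^* = G \cap N$. Since $\bbP_k \cap N$ is a complete suborder of $\bbP_k$, the filter $G^*$ is $(V, \bbP_k \cap N)$-generic, and $p \in G \cap N = G^*$. Write $Y = N \cap \ka$, and let $X_k, F_k$ be the interpretations of $\dot X_k, \dot F_k$ by $G$. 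It then suffices to show that $M \cap N \cap \ka$ is $\seq{X_k, F_k}$-complete in $V[G]$.

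First I would pin down the ordinals of $M$. As $p$ is in particular an $(M, \bbP_k \cap N)$-generic condition lying in $G^*$, the fact recalled earlier gives $M \cap \mathrm{ON} = M[G^*] \cap \mathrm{ON}$, and hence $M \cap \ka = M[G^*] \cap \ka$. Consequently
\[
M[G^*] \cap Y = (M[G^*] \cap \ka) \cap N = (M \cap \ka) \cap N = M \cap N \cap \ka ,
\]
so it is enough to prove that $M[G^*] \cap Y$ is $\seq{X_k, F_k}$-complete in $V[G]$.

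Next I would push the question down into $V[G^*]$. By the observations preceding the claim, the induced subgraph on $Y$ and $F_k \restriction Y$ both lie in $V[G^*]$, and for every $y \in [Y]^\om$, $y$ is $\seq{X_k, F_k}$-complete in $V[G]$ if and only if $y$ is $\seq{Y, F_k \restriction Y}$-complete in $V[G^*]$. Since $M[G^*] \cap Y$ is countable, it therefore suffices to verify completeness of $M[G^*] \cap Y$ relative to $\seq{Y, F_k \restriction Y}$ inside $V[G^*]$. In $V[G^*]$ we have $\col(Y) \le \om$, because $\bbP_k/(\bbP_k \cap N)$ is $\om_1$-stationary preserving and so Lemma \ref{1.11} applies; moreover $Y$, the induced graph on $Y$, and $F_k \restriction Y$ all belong to $M[G^*]$, as their names are definable from $\dot X_k, \dot F_k, Y \in M$ and hence lie in $M$ by elementarity. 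Now the argument of Lemma \ref{2.3+} goes through verbatim inside $V[G^*]$, taking the ambient graph to be the graph on the vertex set $Y$ (whose full vertex set is trivially $\seq{Y, F_k \restriction Y}$-complete): fixing a witness $f : Y \to [Y]^{<\om}$ for $\col(Y) \le \om$ with $f \in M[G^*]$, one checks both clauses of Definition \ref{2.2} for $M[G^*] \cap Y$ exactly as there, and this yields the claim.

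The hard part will be the bookkeeping between $V[G]$ and $V[G^*]$. One must be sure that completeness is genuinely the same notion in the two models --- this is precisely the content of the observations above, which rest on Lemma \ref{2.1} and on the fact that $N[G]$ remains $\sigma$-closed with $N[G] \cap \mathrm{ON} = N \cap \mathrm{ON}$ --- and that the relevant countable sets $\calE_k^\alpha \cap \alpha$, $F_k(\alpha)$, and the finite sets $f(\alpha)$ lie inside $M[G^*]$ and not merely inside $V[G^*]$; this last point uses $M[G^*] \prec \calH_\theta^{V[G^*]}$ together with the countability of these sets.
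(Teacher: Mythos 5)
Your proposal is correct and follows essentially the same route as the paper's own proof: pass to an arbitrary $(V,\bbP_k)$-generic $G$ containing $p$, use the induced generic $G^*=G\cap N$ and the identity $M\cap\mathrm{ON}=M[G^*]\cap\mathrm{ON}$ to identify $M\cap N\cap\ka$ with $M[G^*]\cap Y$, reduce $\seq{X_k,F_k}$-completeness in $V[G]$ to $\seq{Y,F_k\restriction Y}$-completeness in $V[G^*]$ via the preceding observations, and then run the argument of Lemma \ref{2.3+} inside $V[G^*]$ using $\col(Y)\le\om$ and a witness $f\in M[G^*]$. No gaps.
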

\begin{proof}[Proof of Claim]
Let $p$ be a strong $(M, \bbP_k \cap N)$-generic condition.
Take a $(V, \bbP_k)$-generic $G$ with $p \in G$,
and let $G^*$ be $(V, \bbP_k\cap N)$-generic induced by $G$.

Let $X_k=\seq{\ka, \calE_k}$ and $F_k$ be the interpretations of $\dot X_k$ and $\dot F_k$ by $G$ respectively.
We know that $M[G^*] \cap \mathrm{ON}=M \cap \mathrm{ON}$
(but $M[G] \cap \mathrm{ON} \neq M \cap \mathrm{ON}$ may be possible).
Let $Y=N \cap \ka$.
Now we show that $M \cap Y$ is $\seq{X_k, F_k}$-complete in $V[G]$.
By the observation before,
it is enough to show that $M \cap Y$ is $\seq{Y, F_k \rest Y}$-complete in $V[G^*]$.
We argue as in the proof of Lemma \ref{2.3+}. We work in $V[G^*]$.
We know that $N[G] \cap \ka=N[G^*] \cap \ka=N \cap \ka=Y$ and $Y$ is $(Y, F_k \rest Y)$-complete in $V[G^*]$.
We also know $\col(Y) \le \om$ in $V[G^*]$. In addition we have $Y, F_k \rest Y \in M[G^*]$.
Since $M[G^*] \prec \calH_\theta^{V[G^*]}$ and $M \cap \ka=M[G^*] \cap \ka$,
it is enough to check that 
for every $\beta \in Y$, if $M[G^*]\cap Y \cap \calE^\beta_k$ is infinite
then $\beta \in M[G^*] \cap Y$.
Since $\col(Y) \le \om$,
there is $f: Y \to [Y]^{<\om}$ in $M[G^*]$ such that
for every $\alpha, \alpha' \in Y$, if $\alpha \mathrel\calE_k \alpha'$ then
$\alpha \in f(\alpha')$ or $\alpha' \in f(\alpha)$.
$M[G^*] \cap Y \cap \calE^\beta_k$ is infinite but $f(\beta)$ is finite,
so there is $\alpha \in (M[G^*] \cap Y \cap \calE^\beta_k) \setminus f(\beta)$.
Then $\beta \in f(\alpha) \subseteq M[G^*]$.
\end{proof}

Now we prove (c). Take an $(M, \bbP_l \cap N)$-generic sequence $\seq{p_n \mid n<\om}$ in $D_l \cap N$.
We can identify $p_n$ as $\seq{q_n,  r_n}$ where $q_n \in D_k \cap N$, $r_n \in N$,
and $q_n \Vdash_{\bbP_k} r_n \in \dot \bbQ_k$.
The sequence $\seq{q_n \mid n<\om}$ is an $(M, \bbP_k \cap N)$-generic sequence.
By the induction hypotheses, the canonical limit $q$ of this sequence is a condition in $\bbP_k \cap N$.
By the claim above, we know $q \Vdash_{\bbP_k}$ ``$M \cap N \cap \ka$ is $(\dot X_k, \dot F_k)$-complete''.
Since $\dom(\bigcup_n r_n)=M \cap N \cap \ka$, one can check that 
$q \Vdash_{\bbP_k}$``$\bigcup_n r_n \in \bbQ_k$'',
thus $\seq{q, (\bigcup_n r_n)}$ is a lower bound
of $\seq{p_n \mid n<\om}$ in $\bbP_l \cap N$,
and  is the canonical limit of $\seq{p_n \mid n<\om}$.

(b). Take another $\sigma$-closed model $N' \prec \calH_\theta$
with $\size{N'}=\om_1$ and $N,\dotsc \in N'$.
As in the proof of Lemma \ref{5.21}, it is sufficient to prove
that $(\bbP_l \cap N')/(\bbP_l \cap N)$ is $\om_1$-stationary preserving.

Take a $(V, \bbP_l \cap N)$-generic $G^*$ and work in $V[G^*]$.
Fix a stationary set $S \subseteq \om_1$.
Take a countable model $M' \prec \calH_\theta^{V[G^*]}$
such that $M'$ contains all relevant objects and $M' \cap \om_1 \in S$.
It is enough to prove that there is an $(M', (\bbP_l \cap N')/G^*)$-generic condition.
Take an $(M', (\bbP_l \cap N')/G^*)$-generic sequence $\seq{p_n \mid n<\om}$ with $p_n \in D_l \cap N$.
As before, we may assume that each $p_n$ is of the form $\seq{q_n, r_n}$.
Note that $\seq{p_n \mid n<\om} \in V$.
Let $M=M' \cap V$, which is a countable elementary submodel of $\calH_\theta^V$ and $M \in V$.
We know $M[G^*]=M'$.
In addition, $\seq{p_n \mid n<\om}$ is an $(M, \bbP_l \cap N')$-generic sequence,
and $\seq{q_n \mid n<\om}$ is $(M, \bbP_k \cap N')$-generic.
By the induction hypotheses, the canonical limit $q$ of the sequence $\seq{q_n \mid n<\om}$
is a condition of $\bbP_k \cap N'$.
Let $r=\bigcup_n r_n \in N'$.
By (c) and the claim above, $q \Vdash_{\bbP_k}$``$\dom(r)=M \cap N' \cap \ka$ is $(\dot X_k, \dot F_k)$-complete'',
hence $q \Vdash_{\bbP_k}$``$r \in \dot \bbQ_k$'',
and $\seq{q, r} \in \bbP_l \cap N'$.
Now consider the reduction $q_n^N$ of $q_n$. We have $q_n^N \in G^* \cap \bbP_k$.
By the construction of $q$, the reduction $q^N$ of $q$ is also in $G^* \cap \bbP_k$.
In addition, $q^N$ is a strong $(M, \bbP_k \cap N)$-generic condition.
By the claim, we have $q^N \Vdash_{\bbP_k}$``$\dom(r^N)=M \cap N \cap \ka$ is $(\dot X_k, \dot F_k)$-complete'',
hence $q^N \Vdash_{\bbP_k}$``$r^N \in \dot \bbQ_k$'',
and $\seq{q,r}^N=\seq{q^N, r^N} \in G^*$.
Combining these arguments,
we have that $\seq{q,  r} \in (\bbP_l \cap N')/G^*$.
\\

Case 2: $l$ is limit. (3) is trivial.
For (1), the chain condition of $\bbP_l$ follows from (4) and the standard $\Delta$-system argument.
The $\sigma$-Baireness follows from (5).
We show that (2), (4), and (5) hold.

(4). Take $p \in \bbP_l$.
Now fix a $\sigma$-closed model $N \prec \calH_\theta$ of size $\om_1$ and $p, \bbP_\xi,\dotsc \in N$.
Here notice that we do not know yet that $\bbP_l \cap N$ is a complete  suborder of $\bbP_l$,
but this does not cause a problem.
Take a countable $M \prec \calH_\theta$ with $N, p,\dotsc \in M$.
Take an $(M, \bbP_l \cap N)$-generic sequence $\seq{p_n\mid n<\om}$ with
$p_0 \le p$.
Note that $\supp(p_n) \subseteq M \cap N \cap l$ for every $n<\om$. 
Fix an increasing sequence $\seq{l_m\mid m<\om}$ with limit $\sup(M \cap N \cap l)$ and $l_m \in M \cap N \cap l$.
For each $m<\om$,
the sequence $\seq{p_n \restriction l_m \mid n<\om}$ is an $(M, \bbP_{l_m} \cap N)$-generic sequence.
By the induction hypotheses,
the canonical limit $q_m$ of $\seq{p_n \restriction l_m \mid n<\om}$ is a condition and
in $D_{l_m} \cap N$.
Clearly we have $q_m=q_n \restriction l_m$ for $m<n<\om$.
Let $q=\bigcup_{m<\om} q_m$. We have that $ q\in D_l$ and $q \le p$.

(5). 
Fix a $\sigma$-closed $N \prec \calH_\theta$ of size $\om_1$ containing all relevant objects.

For (a),  let $p \in \bbP_l$. Then for each $\xi \in \supp(p) \cap N$,
$(p \restriction \xi)^{N} \in N$ is a reduction of $p \rest \xi$.
Then clearly $p^{N}=\bigcup_{\xi \in \supp(p) \cap N} (p \restriction \xi)^{N} \in N$ and
is the greatest reduction of $p$.

For (c), take a countable $M \prec \calH_\theta$ with $N,\dotsc \in M$.
Fix an $(M, \bbP_l \cap N)$-generic sequence $\seq{p_n\mid n<\om}$.
Take an increasing sequence $\seq{l_m\mid m<\om}$ with limit $\sup(N \cap M \cap l)$ and
$l_m \in N \cap M \cap l$.
For $m<\om$, the sequence $\seq{p_n \restriction l_m\mid n<\om}$ is an $(M, \bbP_{l_m} \cap N)$-generic sequence,
so the canonical limit $q_m$ of the sequence is a condition in $\bbP_{l_m} \cap N$ by the induction hypotheses.
Then $q=\bigcup_{m<\om} q_m$ is the canonical limit of $\seq{p_n\mid n<\om}$ and
a condition in $\bbP_l \cap N$.

Finally we have to verify the conditions (5)(b) and (2).
This can be done by the same argument used in the proof of Lemma \ref{5.21}
with the condition (5)(c),
so we show only (2).
For the $\om_1$-stationary preserving property of $\bbP_l$,
fix a stationary set $S \subseteq \om_1$, $p \in \bbP_l$, and a $\bbP_l$-name $\dot C$ for a club in $\om_1$.
Take a $\sigma$-closed $N \prec \calH_\theta$ with size $\om_1$ and $S, \dot C, \dotsc \in N$.
Since $\bbP_l$ satisfies the $\om_2$-c.c., 
we may assume that $\dot C$ is a $\bbP_l \cap N$-name.
Take a countable model $M \prec \calH_\theta$ containing all relevant objects such that
$M \cap \om_1 \in S$.
By (5)(c), we can take a strong $(M, \bbP_l \cap N)$-generic condition $q \le p$.
We have $q \Vdash_{\bbP_l \cap N} M \cap \om_1 \in \dot C$,
and since $\bbP_l \cap N$ is a complete suborder,
we also 
have $q \Vdash_{\bbP_l} M \cap \om_1 \in \dot C$.
For the $\om_1$-diamond preserving property, 
let $\seq{d_\alpha \mid \alpha \in S}$ be a $\diamondsuit(S)$-sequence,
and $\dot A$ a $\bbP_l$-name for a subset of $\om_1$.
Take a $\sigma$-closed $N \prec \calH_\theta$ of size $\om_1$ and $\seq{d_\alpha \mid \alpha \in S},\dot A,\dotsc \in N$.
As before, we may assume that $\dot A$ is a $\bbP_l \cap N$-name.
Take an internally approachable sequence $\seq{M_i \mid i<\om_1}$ of countable elementary submodels of $\calH_\theta$
containing all relevant objects.
By (5)(c), we can take a descending sequence $\seq{p_i \mid i<\om_1}$ in $\bbP_l \cap N$
such that for each $i<\om_1$, $p_i$ is a strong $(M_i, \bbP_l \cap N)$-generic condition,
and $\seq{p_i \mid i \le j} \in M_{j+1}$.
Let $A=\{\gamma<\om_1 \mid \exists i<\om_1 \, (p_i \Vdash_{\bbP \cap N} \gamma \in \dot A)\}$,
and take $\alpha \in S$ such that $d_\alpha=A \cap \alpha$.
Then, as in Lemma \ref{5.21},
we can show $p_\alpha \Vdash_{\bbP_l \cap N}$``$d_\alpha=\dot A \cap \alpha$'',
and so 
$p_\alpha \Vdash_{\bbP_l}$``$d_\alpha=\dot A \cap \alpha$''.

This completes the proof that $\bbP_l$ satisfies the induction hypotheses (1)--(5).
\\

For $\xi<l$, let $G$ be $(V, \bbP_\xi)$-generic.
In $V[G]$, we can consider the tail poset $\bbP_{\xi,l}=\bbP_l/G$.
Since $\bbP_\xi$ is $\sigma$-Baire and $\om_2$-c.c.,
it is clear that the tail poset $\bbP_{\xi,l}$ is forcing
equivalent to an $(l-\xi)$-stage countable support iteration.
In particular the tail poset $\bbP_{\xi, l}$ is $\om_1$-stationary 
preserving and $\om_1$-diamond preserving.

\begin{lemma}\label{5.20+}
\begin{enumerate}
\item For $\xi<l$, if $Y \in V^{\bbP_\xi}$ is a $\ka$-nice graph,
then
$Y$ remains $\ka$-nice in $V^{\bbP_l}$.
\item Suppose $\cf(l)>\ka$.
Let $X \in V^{\bbP_l}$ be a graph of size $\ka$ and, in $V^{\bbP_l}$,
suppose that there is a $\ka$-nice graph $Y$ which is
isomorphic to $X$.
Then there is some $\xi<l$ such that 
$Y \in \bbP_\xi$ and $Y$ is $\ka$-nice in $V^{\bbP_\xi}$.
\end{enumerate}
\end{lemma}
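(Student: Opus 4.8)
The plan is to prove both parts by tracking which properties of a $\ka$-nice graph are absolute between the intermediate models and the final model $V^{\bbP_l}$, using heavily that $\bbP_l$ is $\om_2$-c.c.\ and $\sigma$-Baire, and that all the tail posets are $\om_1$-stationary preserving.

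For part (1), fix $\xi<l$ and a $\ka$-nice graph $Y=\seq{\ka,\calE}$ in $V^{\bbP_\xi}$. I must verify that the four conditions defining $\ka$-niceness survive to $V^{\bbP_l}$. Conditions (1) (the underlying set is $\ka$) and (4) (each $\size{\calE^\alpha\cap\alpha}\le\om$) are statements about the fixed graph that do not mention cardinals, so they are automatically preserved; in particular since the tail $\bbP_{\xi,l}$ adds no new reals or $\om$-sequences (it is $\sigma$-Baire, being forcing-equivalent to a countable support iteration over a $\sigma$-Baire, $\om_2$-c.c.\ ground model), the countable sets $\calE^\alpha\cap\alpha$ do not change. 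For condition (2), $\col(Y)\le\om_1$, I would use Fact~\ref{2.2+}: in $V^{\bbP_\xi}$ there is $f:\ka\to[\ka]^{<\om_1}$ witnessing the coloring number, and since $\bbP_{\xi,l}$ adds no new $\om$-sequences this $f$ still has values in $[\ka]^{\le\om}$ in $V^{\bbP_l}$ and still witnesses $\col(Y)\le\om_1$. Condition (3), that every subgraph of size $\om_1$ has countable coloring number, is the delicate one: a new subgraph $Z$ of size $\om_1$ might appear in $V^{\bbP_l}$. Here I would invoke Fact~\ref{2.8+} together with $\om_1$-stationary preservation. Indeed, if some $Z\in[\ka]^{\om_1}$ had $\col(Z)>\om$ in $V^{\bbP_l}$, then (identifying $Z$ with $\om_1$) by Fact~\ref{2.8+} the associated set $S\subseteq\om_1\cap\Cof(\om)$ is stationary in $V^{\bbP_l}$; but $\om_1$-stationary preservation of $\bbP_{\xi,l}$ pulls the relevant witnessing data back so that $\col$ cannot have jumped on a ground-model-controlled $\om_1$-subgraph --- the cleanest route is to note that $\bbP_{\xi,l}$ preserves $\om_1$ and does not add new countable subsets of $\ka$, so every new $Z$ of size $\om_1$ is an increasing union of ground-model countable sets, and condition (3) in $V^{\bbP_\xi}$ plus Fact~\ref{2.2+} gives a witness $f_Z:Z\to[Z]^{<\om}$ already in $V^{\bbP_\xi}$ once the filtration is fixed, hence $\col(Z)\le\om$ persists.

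For part (2), assume $\cf(l)>\ka$, and in $V^{\bbP_l}$ let $X$ have size $\ka$ with a $\ka$-nice $Y\cong X$. The graph $Y$ is coded by a subset of $\ka$ (namely $\calE_Y\subseteq[\ka]^2$), so it is named by a $\bbP_l$-name. Because $\bbP_l$ satisfies the $\om_2$-c.c., each of the $\ka$ membership questions ``$\{\alpha,\beta\}\in\calE_Y$?'' is decided by a maximal antichain of size $\le\om_1$, hence depends on at most $\om_1$ coordinates of the support; the union over all $\ka$ many edges uses at most $\ka$ coordinates. Since $\cf(l)>\ka$, this union of $\le\ka$ supports is bounded, so there is $\xi<l$ with $Y\in V^{\bbP_\xi}$. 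It then remains to check that $Y$ is already $\ka$-nice \emph{in} $V^{\bbP_\xi}$, not merely that it lies there. Conditions (1) and (4) transfer downward exactly as above by $\sigma$-Baireness of the tail. For (2) and (3), I would argue downward: if $\col(Y)>\om_1$ held in $V^{\bbP_\xi}$, then by Corollary~\ref{2.10+++} there would be a regular-sized subgraph witnessing $\col>\om_1$, and by Lemma~\ref{1.11}(2) the $\om_2$-c.c.\ tail forcing $\bbP_{\xi,l}$ would preserve $\col\ge\om_2$ into $V^{\bbP_l}$, contradicting $\col(Y)\le\om_1$ there; similarly, if some $\om_1$-sized $Z\subseteq Y$ in $V^{\bbP_\xi}$ had $\col(Z)>\om$, then Lemma~\ref{1.11}(1) and $\om_1$-stationary preservation of the tail keep $\col(Z)>\om$ in $V^{\bbP_l}$, again contradicting niceness there.

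The main obstacle, and where I expect to spend the real effort, is the downward absoluteness in part (2): proving that $\ka$-niceness of $Y$ holds already in $V^{\bbP_\xi}$ rather than only in the final model. Upward preservation in part (1) is comparatively routine once $\sigma$-Baireness and $\om_1$-stationary preservation are in hand, but the downward direction must rule out the possibility that the coloring number of $Y$ (or of one of its $\om_1$-subgraphs) was genuinely larger in $V^{\bbP_\xi}$ and was only accidentally reduced by the tail. The key leverage is that Lemma~\ref{1.11} says $\col$-values at and above $\om_2$ and at $\om_1$ cannot be \emph{decreased} by $\om_2$-c.c.\ (respectively $\om_1$-stationary preserving) forcing, which is exactly the reverse of a reduction, so the contrapositive gives the needed downward transfer of the niceness conditions.
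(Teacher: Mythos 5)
Your part (2) is essentially the paper's proof: locate $Y$ in some $V^{\bbP_\xi}$ using the $\om_2$-c.c.\ together with $\cf(l)>\ka$, then transfer conditions (2) and (3) of niceness downward via Lemma~\ref{1.11}~(2) and (1) respectively; this is exactly what the paper does. The genuine gap is in part (1), condition (3). Neither of the two mechanisms you offer there closes the argument. The appeal to $\om_1$-stationary preservation runs in the wrong direction: preservation says stationary sets of the \emph{intermediate} model stay stationary in $V^{\bbP_l}$, whereas the stationary set you would extract from a bad $Z\in[\ka]^{\om_1}$ via Fact~\ref{2.8+} is defined from a set $Z$ that need not exist in $V^{\bbP_\xi}$, so there is nothing in the intermediate model to pull back to. Your ``cleanest route'' also does not work as stated: writing a new $Z$ as an increasing union $\bigcup_{i<\om_1} z_i$ of ground-model countable sets does not produce a single witness $f_Z:Z\to[Z]^{<\om}$ in $V^{\bbP_\xi}$, because the filtration $\seq{z_i\mid i<\om_1}$ itself lives only in $V^{\bbP_l}$, and knowing $\col(z_i)\le\om$ for each piece does not bound $\col(Z)$ (by Fact~\ref{2.2+}~(3) one would also need $\size{\calE^x\cap z_i}<\om$ for $x\in Z\setminus z_i$, which is not given).

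The missing idea, and what the paper actually does, is a covering argument: since the tail forcing is $\om_2$-c.c., every $Z\in[\ka]^{\om_1}$ of $V^{\bbP_l}$ is contained in some $Z'\in V^{\bbP_\xi}$ with $\size{Z'}\le\om_1$ computed in $V^{\bbP_\xi}$. Niceness of $Y$ in $V^{\bbP_\xi}$ gives $\col(Z')\le\om$ there, this is upward absolute (the witness from Fact~\ref{2.2+}~(2) still works in $V^{\bbP_l}$), and monotonicity of $\col$ under subgraphs yields $\col(Z)\le\col(Z')\le\om$. With that substitution part (1) closes; the rest of your plan (conditions (1) and (4), and $\col(Y)\le\om_1$ by upward absoluteness of the Fact~\ref{2.2+} witness) agrees with the paper.
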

\begin{proof}
In $V^{\bbP_\xi}$, the tail poset $\bbP_{\xi, l}$ satisfies the $\om_2$-c.c.
and is $\om_1$-stationary preserving.

(1). Let $Y$ be a $\ka$-nice graph in $V^{\bbP_\xi}$.
We check that $Y$ is $\ka$-nice in $V^{\bbP_l}$,
and it is enough to show that
$\col(Y) \le \om_1$, and $\col(Z) \le \om$ for every $Z \subseteq Y$ with size $\le \om_1$ in $V^{\bbP_l}$.

Since $\col(Y) \le \om_1$ in $V^{\bbP_\xi}$,
it is clear that $\col(Y) \le \om_1$ in $V^{\bbP_l}$.
If $Z \in V^{\bbP_l}$ is a subset of $Y$ with size $\om_1$,
by the $\om_2$-c.c. we can find $Z' \in V^{\bbP_\xi}$ such that
$Z \subseteq Z'$ and $\size{Z'} \le \om_1$ in $V^{\bbP_\xi}$.
Since $Y$ is $\ka$-nice in $V^{\bbP_\xi}$,
we have that $\col(Z') \le \om$ in $V^{\bbP_\xi}$.
Then we have that $\col(Z) \le \col(Z') \le \om$ in $V^{\bbP_l}$.

(2). Let $Y$ be a graph in $V^{\bbP_l}$ which is $\ka$-nice and is isomorphic to $X$.
Since $\cf(l)>\ka$ and the chain condition of $\bbP_l$,
there is some $\xi<l$ with $Y \in V^{\bbP_\xi}$.
We have to check that $Y$ is $\ka$-nice in $V^{\bbP_\xi}$,
and, again, it is enough to show that
$\col(Y) \le \om_1$, and $\col(Z) \le \om$ for every $Z \subseteq Y$ with size $\le \om_1$ in $V^{\bbP_\xi}$.
We know $\col(Y) \le \om_1$ in $V^{\bbP_l}$.
By Lemma \ref{1.11} (2), we know $\col(Y) \le \om_1$ in $V^{\bbP_\xi}$.
Next take $Z \in [Y]^{\om_1}$ in $V^{\bbP_\xi}$.
Since $\col(Z) \le \om$ in $V^{\bbP_l}$,
we have that $\col(Z) \le \om$ in $V^{\bbP_l}$
by Lemma \ref{1.11} (1).
\end{proof}

Suppose $2^\ka=\ka^+$
and consider a $\ka^+$-stage iteration $\bbP_{\ka^+}$.
Using the standard book-keeping method and Lemma \ref{5.20+},
we have:
\begin{prop}\label{ite}
Suppose CH, $\ka\ge \om_2$, and $2^{\ka}=\ka^+$.
Then we can construct a poset $\bbP$ which is $\sigma$-Baire,
satisfies the $\om_2$-c.c., 
and forces the following:
\begin{enumerate}
\item $\diamondsuit(S)$ holds for every stationary $S \subseteq \om_1$.
\item For every graph $X$ of size $\le \ka$, 
if $\col(X) \le \om_1$ and $\col(Y) \le \om$ for every $Y \in [X]^{\om_1}$,
then $\List(X) \le \om$.
\end{enumerate}
\end{prop}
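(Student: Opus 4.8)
The plan is to let $\bbP=\bbP_{\ka^+}$ be the countable support iteration $\seq{\bbP_\xi,\dot\bbQ_\eta\mid \eta\le\xi<\ka^+}$ whose iterands are the posets $\bbP(\dot X_\xi,\dot F_\xi)$ studied above, with the names $\dot X_\xi,\dot F_\xi$ chosen by bookkeeping. Since the induction hypotheses (1)--(5) persist at every stage, $\bbP$ is automatically $\sigma$-Baire, has the $\om_2$-c.c., and is $\om_1$-stationary preserving and $\om_1$-diamond preserving; in particular all cardinals are preserved. I would designate a cofinal set of \emph{diamond stages} $\xi$ at which $\dot X_\xi$ is the trivial graph $\seq{\ka,\emptyset}$ and $\dot F_\xi(\alpha)=\om$, so that Lemma~\ref{2.10} applies there. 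At every other stage I would let $(\dot X_\xi,\dot F_\xi)$ range over pairs consisting of a $\bbP_\xi$-name for a $\ka$-nice graph and a $\bbP_\xi$-name for an $\om$-assignment $\ka\to[\ka]^\om$. To make hypothesis~(3) literally hold I would use the usual mixing of names: replace a prospective $\dot X_\xi$ by the name equal to the intended graph on the part of the generic where it is $\ka$-nice and equal to $\seq{\ka,\emptyset}$ elsewhere, and likewise for $\dot F_\xi$.

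The feasibility of the bookkeeping rests on cardinal arithmetic. From $2^\ka=\ka^+$ one gets $\ka^\om\le\ka^\ka=2^\ka=\ka^+$ and $(\ka^+)^\ka\le(2^\ka)^\ka=2^\ka=\ka^+$, hence $(\ka^+)^\om=(\ka^+)^{\om_1}=(\ka^+)^\ka=\ka^+$. Each iterand consists of countable partial functions from $\ka$ to $\ka$ and so has size $\le\ka^\om\le\ka^+$; an induction on $\xi$ using the $\om_2$-c.c.\ and countable supports then yields $\size{\bbP_\xi}\le\ka^+$ for all $\xi\le\ka^+$, whence $2^\ka=\ka^+$ is preserved. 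By the $\om_2$-c.c.\ a name for a subset of $\ka$ — hence for a graph on $\ka$ or for an $\om$-assignment — is coded by $\ka$ many antichains of size $\le\om_1$, and there are only $\ka^+$ such names over any single $\bbP_\xi$. Thus at each stage there are at most $\ka^+$ relevant pairs of names, and a standard pairing bijection $\ka^+\to\ka^+\times\ka^+$ lets me arrange that every pair of $\bbP_\zeta$-names is treated at some stage $\ge\zeta$, while keeping the diamond stages cofinal.

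For clause~(1), let $S\subseteq\om_1$ be stationary in $V^{\bbP}$. As $\bbP$ has the $\om_2$-c.c.\ and $\cf(\ka^+)>\om_1$, the set $S$ already lies in $V^{\bbP_{\xi_0}}$ for some $\xi_0<\ka^+$, and it is stationary there: a club disjoint from $S$ in any intermediate extension would stay a club in $V^{\bbP}$ by $\sigma$-Baireness, so non-stationarity of $S$ at $\xi_0$ would contradict its stationarity in $V^{\bbP}$. Picking a diamond stage $\xi_1\ge\xi_0$, the set $S$ is still stationary in $V^{\bbP_{\xi_1}}$ by $\om_1$-stationary preservation, so $\dot\bbQ_{\xi_1}$ forces $\diamondsuit(S)$ by Lemma~\ref{2.10}; and the $\om_1$-diamond preservation of the tail carries the resulting $\diamondsuit(S)$-sequence up to $V^{\bbP}$. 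Since $S$ was arbitrary, (1) holds.

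For clause~(2), take a graph $X$ of size $\le\ka$ in $V^{\bbP}$ with $\col(X)\le\om_1$ and $\col(Y)\le\om$ for every $Y\in[X]^{\om_1}$; padding with isolated vertices if necessary, I may assume $\size{X}=\ka$, which changes neither the hypotheses nor whether $\List(X)\le\om$. By the remark following the definition of $\ka$-niceness, $X$ is isomorphic to a $\ka$-nice graph $X'$, and $\List(X)=\List(X')$, so it suffices to show $\List(X')\le\om$. Since $\cf(\ka^+)>\ka$, Lemma~\ref{5.20+}(2) gives a stage $\xi$ at which $X'$ lives and is $\ka$-nice, and Lemma~\ref{5.20+}(1) keeps it $\ka$-nice afterwards. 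Now fix any $\om$-assignment $F$ of $X'$ in $V^{\bbP}$; relabelling its countably many values into $\ka$ I may assume $F:\ka\to[\ka]^\om$, and as in the previous paragraph $F\in V^{\bbP_\eta}$ for some $\eta<\ka^+$. By the bookkeeping the pair $(X',F)$ is addressed at some stage $\zeta\ge\max(\xi,\eta)$, where $\dot\bbQ_\zeta=\bbP(X',F)$ adds a good coloring $g$ with $g(\alpha)\in F(\alpha)$ by Lemma~\ref{2.11}. As $F$ was arbitrary, $\List(X')\le\om$ in $V^{\bbP}$. The main obstacle is precisely the coordination of the bookkeeping: one must guarantee that each pair (a $\ka$-nice graph, an $\om$-assignment) arises at a bounded stage and is then matched — after both coordinates have appeared — with an iterand of the prescribed form, while simultaneously keeping enough diamond stages to catch every stationary subset of $\om_1$. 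This is where the size estimates and the mixing of names of the first two paragraphs are indispensable.
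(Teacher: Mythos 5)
Your proposal is correct and is exactly the ``standard book-keeping method'' that the paper invokes without spelling out: a $\ka^+$-stage countable support iteration of the posets $\bbP(\dot X_\xi,\dot F_\xi)$ with cofinally many trivial stages for Lemma~\ref{2.10}, the name-counting from $2^\ka=\ka^+$ and the $\om_2$-c.c., and Lemmas~\ref{5.20+} and~\ref{2.11} to catch and color each $\ka$-nice graph. Nothing essential is missing.
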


\section{Proofs of Theorem \ref{thm1} and \ref{thm2} (1)}\label{sec6}

In this section, using the forcing notion constructed in Section \ref{sec5},
we give proofs of Theorem \ref{thm1} and \ref{thm2} (1).

First we give the proof of Theorem \ref{thm1}.
Recall that: 
\let\temp\thethm
\renewcommand{\thethm}{\ref{thm1}}
\begin{thm}
Suppose GCH. Let $\la>\om_1$ be a cardinal,
and suppose $\mathrm{AP}_{\mu}$ holds for every $\mu<\la$ with countable cofinality.
Then there is a poset $\bbP$ which is $\sigma$-Baire, satisfies $\om_2$-c.c.,
and forces that ``\,$\Refl(\List, \la)$ holds and $2^{\om_1}>\la$''.
\end{thm}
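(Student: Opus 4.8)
The plan is to feed Proposition~\ref{ite} the cardinal $\ka=\la$. Since GCH gives $2^\la=\la^+$ and $\la>\om_1$ forces $\la\ge\om_2$, Proposition~\ref{ite} produces a $\sigma$-Baire, $\om_2$-c.c.\ poset $\bbP$ (a $\la^+$-stage countable support iteration) which forces that $\diamondsuit(S)$ holds for every stationary $S\subseteq\om_1$, and that statement (2) of Proposition~\ref{ite} holds, i.e.\ $\List(X)\le\om$ for every graph $X$ with $\size{X}\le\la$, $\col(X)\le\om_1$, and $\col(Y)\le\om$ for all $Y\in[X]^{\om_1}$. Arranging the book-keeping so that the trivial-graph instances $\mathrm{Fn}(\la,\om,<\om_1)$ (which are of the form $\bbP(X,F)$, cf.\ Lemma~\ref{2.10}) occur cofinally in the iteration, the generic adds $\la^+$ many subsets of $\om_1$, whence $\bbP$ forces $2^{\om_1}\ge\la^+>\la$. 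So it remains only to see that $\bbP$ forces $\Refl(\List,\la)$.

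Before the main argument I would record the preservation facts that let me apply Proposition~\ref{1.8} in $V^\bbP$. As $\bbP$ is $\sigma$-Baire it adds no new $\om$-sequences, and it preserves cardinals and cofinalities; hence CH is preserved, and for every regular uncountable $\nu\le\la$ the quantity $\nu^\om$ is computed over the unchanged set of ground-model $\om$-sequences, so $\nu^\om=\nu$ survives. For the same reason every ground-model witness to $\mathrm{AP}_\mu$ (for singular $\mu<\la$ of countable cofinality) remains a witness in $V^\bbP$: the order-type and coherence requirements are absolute, and the associated club stays closed unbounded since no new points of cofinality $\om$ appear. Consequently, in $V^\bbP$ all the hypotheses of Proposition~\ref{1.8} are in force for graphs of size $\le\la$.

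Now I would argue inside $V^\bbP$. Fix a graph $X$ with $\size{X}\le\la$ such that $\List(Y)\le\om$ for every $Y\in[X]^{\le\om_1}$; the goal is $\List(X)\le\om$. First, $\col(X)\le\om_1$: otherwise $\col(X)>\om_1$, and Proposition~\ref{1.8}, applicable by the preceding paragraph, yields a subgraph of $X$ of size $\om_1$ isomorphic to $K_{\om,\om_1}$, which has uncountable list-chromatic number by Fact~\ref{2.2.4}, contradicting the hypothesis on $X$. Second, since $\diamondsuit(S)$ holds for every stationary $S\subseteq\om_1$, Corollary~\ref{3.4.1} turns $\List(Y)\le\om$ into $\col(Y)\le\om$ for every $Y\in[X]^{\om_1}$. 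Having both $\col(X)\le\om_1$ and $\col(Y)\le\om$ for all $Y\in[X]^{\om_1}$, statement (2) of Proposition~\ref{ite} delivers $\List(X)\le\om$, which is precisely $\Refl(\List,\la)$.

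The main obstacle is the tension in these last steps: one must blow up $2^{\om_1}$ above $\la$ while retaining the exact cardinal-arithmetic environment ($\nu^\om=\nu$ for regular uncountable $\nu\le\la$ and $\mathrm{AP}_\mu$ for singular $\mu<\la$ of countable cofinality) under which Proposition~\ref{1.8} manufactures a copy of $K_{\om,\om_1}$. This is exactly where $\sigma$-Baireness carries the weight, since it is what keeps the enlargement of $2^{\om_1}$ from perturbing the values $\nu^\om$ and the approachability sequences; thus the real labor is the careful (but routine) preservation verification sketched in the second paragraph, together with confirming that the iteration genuinely pushes $2^{\om_1}$ past $\la$.
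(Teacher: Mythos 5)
Your proposal is correct and follows essentially the same route as the paper: apply Proposition~\ref{ite} with $\ka=\la$, use Proposition~\ref{1.8} (via the preserved cardinal arithmetic and $\mathrm{AP}_\mu$) to handle the case $\col(X)>\om_1$, and use $\diamondsuit$ together with Corollary~\ref{3.4.1} to convert countable list-chromatic number of $\om_1$-sized subgraphs into countable coloring number, feeding the result back into clause (2) of Proposition~\ref{ite}. The only differences are presentational: you argue in the direct rather than contrapositive form, and you spell out the verification of $2^{\om_1}>\la$ and the preservation claims that the paper absorbs into the statement of what $\bbP$ forces.
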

\begin{proof}[Proof of Theorem \ref{thm1}]
Suppose GCH. 
Let $\ka \ge \om_2$ be a cardinal. 
Suppose $\mathrm{AP}_\la$ holds for every singular $\la<\ka$ of countable cofinality.
By Proposition \ref{ite}, we can take an $\om_2$-c.c., $\sigma$-Baire poset which
forces the following:
\begin{enumerate}
\item $\diamondsuit(S)$ holds for every stationary $S \subseteq \om_1$.
\item $\la^\om=\la$ for every regular uncountable $\la<\ka$.
\item $\mathrm{AP}_\la$ holds for every singular $\la<\ka$ of countable cofinality.
\item For every graph $X$ of size $\le \ka$, 
if $\col(X) \le \om_1$ and $\col(Y) \le \om$ for every $Y \in [X]^{\om_1}$, 
then $\List(X) \le \om$.
\end{enumerate}

Then $V^{\bbP}$ is the required model;
Take a graph $X$ with $\om_2 \le \size{X} \le \ka$ and $\List(X)>\om$.
By (4), we have $\col(X) \ge \om_2$,
or $\col(Y) >\om$ for some $Y \in [X]^{\om_1}$.
If $\col(X) \ge \om_2$, then
there is $Y \in [X]^{\om_1}$ with $\List(Y) >\om$ by (2), (3), and Proposition \ref{1.8}.
If $\col(Y) >\om$ for some $Y \in [X]^{\om_1}$,
then $\List(Y)>\om$ by (1) and Corollary \ref{3.4.1}.
\end{proof}

\let\thethm\temp
\addtocounter{thm}{-1}


Before starting the proof of  Theorem \ref{thm2} (1),
we introduce the following notion.
For a poset $\bbP$ and an ordinal $\alpha$,
let $\Gamma_\alpha(\bbP)$ denote the following two players game of length $\alpha$:
At each inning, Players I and II choose conditions of $\bbP$ alternately with
$p_0 \ge q_0 \ge p_1 \ge q_1 \ge \cdots$, but if $\beta<\alpha$ is limit,
at the $\beta$-th inning, Player I does not move and
only Player II chooses a condition $q_\beta$ which is a lower bound of the partial play $\langle p_\xi, q_\zeta \mid \xi,\zeta<\beta$,
$\xi=0$ or successor$\rangle$ (if it is possible):
\begin{center}
\begin{tabular}{c||c|c|c|c|c|c}
 & $0$ & $1$ & $\cdots$       & $\omega$ & $\omega+1$ & $\cdots$ \\
\hline
I & $p_0$ & $p_1$ & $\cdots$ &         & $p_{\omega+1}$ & $\cdots$  \\
\hline
II & $q_0$ & $q_1$ & $\cdots$ & $q_{\omega}$ & $q_{\omega+1}$ &  $\cdots$\\

\end{tabular}
\end{center}
Notice that Player I can always choose $p_{i+1}=q_i$, so the only question is 
whether Player II can choose a condition.
We shall say that Player II \emph{wins} if II can choose a condition at each inning,
and Player I \emph{wins} otherwise.

$\bbP$ is \emph{$\alpha$-strategically closed} if Player II has a winning strategy
in the game $\Gamma_\alpha(\bbP)$.
If $\ka$ is a cardinal and $\bbP$ is $\ka$-strategically closed,
then it is easy to show that $\bbP$ is $\ka$-Baire.

\begin{define}
Let $\ka$ be a regular uncountable cardinal.
$\bbS_\ka$ is the poset consists of all non-reflecting 
bounded subsets of $\ka \cap \Cof(\om)$,
that is, $p \in \bbS_\ka \iff p$ is a bounded subset of $\ka \cap \Cof(\om)$
and $p \cap \alpha$ is non-stationary in $\alpha$ for every $\alpha<\ka$.
Define $p \le q$ if $p$ is an end-extension of $q$.
\end{define}
Clearly $\size{\bbS_\ka}=2^{<\ka}$, hence has the $(2^{<\ka})^+$-c.c.
The following is well-known:
\begin{lemma}
\begin{enumerate}
\item $\bbS_\ka$ is $\ka$-strategically closed.
\item Let $G$ be $(V, \bbS_\ka)$-generic.
Then $\bigcup G$ is a non-reflecting stationary set in $\ka$.
\end{enumerate}
\end{lemma}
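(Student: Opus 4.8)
The plan is to prove (1) by exhibiting a winning strategy for Player II in $\Gamma_\ka(\bbS_\ka)$, and to prove (2) by a density argument. Two structural features of $\bbS_\ka$ will drive everything. First, since the order is end-extension, any $q'\le q$ satisfies $q'\cap\sup(q)=q$, so an ordinal below $\sup(q)$ that is kept out of $q$ is automatically kept out of every extension. Second, because every condition is a subset of $\Cof(\om)$, a set $S\subseteq\Cof(\om)$ is non-stationary in every $\delta$ with $\cf(\delta)=\om$: the successor ordinals below $\delta$ are cofinal and disjoint from $S$, and a cofinal $\om$-sequence of them is club in $\delta$. Consequently the non-reflection requirement is vacuous at limit points of countable cofinality and is a genuine constraint only at limit points of uncountable cofinality, where Player II will carry an explicit witnessing club.

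For (1), Player II maintains after each of her moves $q_\gamma$ that $s_\gamma:=\sup(q_\gamma)$ has uncountable cofinality and $s_\gamma\notin q_\gamma$ --- she overshoots, extending the position handed to her by a cofinal but non-stationary set of countable-cofinality ordinals up to some $s_\gamma<\ka$ of uncountable cofinality (when $\ka=\om_1$ no limit inning below $\ka$ has uncountable cofinality and she simply plays unions, so assume $\ka>\om_1$). Alongside $q_\gamma$ she keeps a closed unbounded $c_\gamma\subseteq s_\gamma$ with $c_\gamma\cap q_\gamma=\emptyset$, chosen coherently so that $c_\gamma$ end-extends $c_{\gamma'}\cup\{s_{\gamma'}\}$ for every earlier $\gamma'$. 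This is self-sustaining: each $s_\gamma$ lies outside $\Cof(\om)$, hence is never added by any later condition, and by end-extension $c_\gamma$ stays disjoint from $q_{\gamma'}$ below $s_\gamma$ for all later $\gamma'$. At a limit inning $\beta$ she plays the union $\bigcup_{\xi<\beta}q_\xi$ (overshooting once more when $\cf(\beta)=\om$, so the new sup again has uncountable cofinality and the countable-cofinality point $\sup\bigcup_{\xi<\beta}q_\xi$ is frozen out and absorbed into the club); the new witness is the coherent union $\bigcup_{\xi<\beta}(c_\xi\cup\{s_\xi\})$, which is unbounded, disjoint from the union, and closed precisely because its limit points are the $s_\xi$, all of which were placed into the clubs. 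Thus Player II can always move, so $\bbS_\ka$ is $\ka$-strategically closed, and in particular $\ka$-Baire. The hard part is checking that the overshot condition is non-reflecting at every intermediate limit of uncountable cofinality and not merely at its top; this is exactly what the coherent club delivers, since for any such limit point $\alpha$ of $c_\gamma$ the set $c_\gamma\cap\alpha$ is club in $\alpha$ and disjoint from $q_\gamma$.

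For (2), write $S=\bigcup G$. Non-reflection is almost immediate: given $\alpha<\ka$, the conditions with $\sup>\alpha$ are dense (end-extend any condition by a single countable-cofinality ordinal above $\alpha$), so by genericity there is $p\in G$ with $\sup(p)>\alpha$; as the elements of $G$ are pairwise compatible and ordered by end-extension, $S\cap\alpha=p\cap\alpha$, which is non-stationary in $\alpha$ because $p$ is a condition. For stationarity I would use the standard catch-your-tail argument: given a condition $p$ and a name $\dot C$ for a club, build a descending $\om$-chain $p=p_0\ge p_1\ge\cdots$ and ordinals $\beta_0<\beta_1<\cdots$ with $p_{n+1}\Vdash\beta_n\in\dot C$ and $\sup(p_n)<\beta_n<\sup(p_{n+1})$. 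The union $r=\bigcup_n p_n$ is a condition, since $\gamma:=\sup_n\beta_n=\sup(r)$ has cofinality $\om$ and non-reflection there is automatic; moreover $r\Vdash\gamma\in\dot C$ because $\gamma$ is a limit of the $\beta_n\in\dot C$ and $\dot C$ is forced closed. Then $r\cup\{\gamma\}$ is again a condition, and it forces $\gamma\in S\cap\dot C$. Hence $S$ meets every club and is stationary. Finally, since $\bbS_\ka$ is $\ka$-Baire and has the $(2^{<\ka})^+$-c.c., $\ka$ remains a regular uncountable cardinal, so $S$ is a non-reflecting stationary subset of $\ka$.
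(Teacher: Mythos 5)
Your proof is correct and follows essentially the same route as the paper's (which is only sketched there): for (1) Player II plays unions at limit innings while arranging that the suprema of her positions assemble into a club witnessing non-reflection, and for (2) you use the identical catch-your-tail density argument, exploiting that a subset of $\Cof(\om)$ is automatically non-stationary in any ordinal of countable cofinality. The only difference is bookkeeping in (1): the paper's strategy simply appends $\gamma+\om$ at each limit inning and leaves the verification implicit, whereas you overshoot to suprema of uncountable cofinality and carry an explicit coherent club, which makes the limit-stage check more transparent (and you also verify the non-reflection of $\bigcup G$, which the paper omits as routine).
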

\begin{proof}[Sketch of the proof]
(1). For a limit $\beta<\ka$ and a partial play
$\langle p_\xi, q_\zeta \mid \xi,\zeta<\beta$,
$\xi=0$ or successor$\rangle$,
suppose $q=\bigcup_{\zeta<\beta} q_\zeta \in \bbS_\ka$, and let $\gamma=\sup(q)$.
Then Player II takes $q \cup \{\gamma+\om\}$ as his move.
This is a winning strategy of Player II.

(2). To show that $\bigcup G$ is stationary, take a name $\dot C$ for a club in $\ka$.
Take a descending sequence $\seq{p_n \mid n<\om}$ such that
for every $n<\om$,
there is $\alpha_n$ such that $\alpha_n<\sup(p_n)<\alpha_{n+1}$
and $p_{n+1} \Vdash \alpha_n \in \dot C$.
Let $\alpha=\sup_n \alpha_n$, and $p=\bigcup_n p_n \cup \{\alpha\}$.
It is easy to check that $p \in \bbS_\ka$ and
$p \Vdash \alpha \in \dot C \cap \bigcup \dot G$.
\end{proof}
Now we start the proof of Theorem \ref{thm2} (1):
\let\temp\thethm
\renewcommand{\thethm}{\ref{thm2}}
\begin{thm}
If $\ZFC+$``there exists a supercompact cardinal'' is consistent,
then the following theories are consistent as well:
\begin{enumerate}
\item $\ZFC+\Refl(\List)$ holds but $\Refl(\col, \om_2)$ fails.
\item $\ZFC+\Refl(\col)$ holds but $\Refl(\List, \om_2)$ fails.
\end{enumerate}
\end{thm}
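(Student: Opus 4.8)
The plan is to realize the model in three forcing stages over a ground model with a supercompact cardinal $\ka$, arranging that $\ka$ becomes $\om_2$, that coloring-number reflection is destroyed \emph{exactly} at $\om_2$, and that list-chromatic reflection is repaired at $\om_2$ and survives everywhere above. First I would force with $\coll(\om_1,<\ka)$ to obtain an intermediate model $V_1$ in which $\ka=\om_2$, $\diamondsuit(S)$ holds for every stationary $S\subseteq\om_1$, $\FRP(\la)$ holds for every regular $\la\ge\om_2$ (Fact \ref{basic FRP}(3)), and GCH holds above $\om_1$. In $V_1$ I would then force with the two-step iteration $\bbS_{\om_2}*\dot\bbP$, where $\bbS_{\om_2}$ adds a non-reflecting stationary set $S\subseteq\om_2\cap\Cof(\om)$, and $\dot\bbP$ is the good-coloring iteration of Proposition \ref{ite} taken with $\ka=\om_2$ (legitimate since $2^{\om_2}=\om_3$ there), which keeps $\diamondsuit$ on $\om_1$, forces $2^{\om_1}>\om_2$, and forces conclusion (2) of Proposition \ref{ite} for all graphs of size $\le\om_2$. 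Call the resulting model $V_3$; in it $2^\om=\om_1$, $2^{\om_1}=\om_3$, GCH holds at every $\mu\ge\om_2$, and $\diamondsuit(S)$ holds for all stationary $S\subseteq\om_1$.

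Next I would verify $\neg\Refl(\col,\om_2)$. Since $\bbP$ is $\om_2$-c.c.\ it preserves the stationarity of $S$ in $\om_2$, and since it is $\sigma$-Baire it adds no new $\om$-sequences, so for each $\alpha<\om_2$ of cofinality $\om_1$ a ground-model club avoiding $S\cap\alpha$ remains a club; hence $S$ stays non-reflecting in $V_3$. Then Fact \ref{non-ref ladder} produces a graph of size $\om_2$ with uncountable coloring number all of whose subgraphs of size $<\om_2$ have coloring number $\om$, witnessing the failure of $\Refl(\col,\om_2)$.

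Then I would prove $\Refl(\List)$ by induction on $\size{X}$. The case $\size{X}=\om_1$ is trivial. For $\size{X}=\om_2$ I would run exactly the argument of Theorem \ref{thm1}: by conclusion (2) of Proposition \ref{ite}, either $\col(X)=\om_2$, in which case Proposition \ref{1.8} applies (its hypotheses are automatic below $\om_2$, since the only uncountable regular cardinal below $\om_2$ is $\om_1$ with $\om_1^\om=\om_1$ under CH, and there are no singular cardinals below $\om_2$), yielding a copy of $K_{\om,\om_1}$ and hence a subgraph of size $\om_1$ with uncountable list-chromatic number; or some $Y\in[X]^{\om_1}$ has $\col(Y)>\om$, whence $\List(Y)>\om$ by $\diamondsuit$ on $\om_1$ and Corollary \ref{3.4.1}. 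The point of having $2^{\om_1}>\om_2$ is that Fact \ref{sdiamond} yields \emph{no} $\diamondsuit$ on $\om_2\cap\Cof(\om)$, so conclusion (2) does not conflict with Proposition \ref{2.7+}; indeed the good-coloring forcing gives $\List\le\om$ for the ladder graph of Fact \ref{non-ref ladder}, so the non-reflecting set does no damage to list reflection at $\om_2$. For $\size{X}=\la\ge\om_3$ regular I would apply Proposition \ref{1.7} using $\FRP(\la)$ and $\diamondsuit$ on $\om_1$, and for $\size{X}=\la$ singular (necessarily $\la\ge\om_\om>\om_2$) I would apply Theorem \ref{thm4} (Proposition \ref{1.3}), since GCH holds on a tail below $\la$ in $V_3$.

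The main obstacle is establishing $\FRP(\la)$ for all regular $\la\ge\om_3$ in $V_3$. Neither $\bbS_{\om_2}$ nor $\bbP$ is c.c.c.\ (so Fact \ref{basic FRP}(2) does not apply) nor $\om_2$-Baire, and $\FRP(\la)$ is not preserved by arbitrary forcing, so it cannot simply be inherited from $V_1$. The plan is to re-derive it by lifting a $\la$-supercompact embedding $j\colon V\to M$ (with $\crit(j)=\ka$ and $j(\ka)>\la$) through the whole iteration $\coll(\om_1,<\ka)*\bbS_{\om_2}*\bbP$, in the style of the proof of Fact \ref{basic FRP}(3): factor $j(\coll(\om_1,<\ka))=\coll(\om_1,<\ka)*\coll(\om_1,[\ka,j(\ka)))$ and build a generic for the $\sigma$-closed tail; then realize $j(\bbS_{\om_2})=\bbS_{j(\ka)}$ by threading the already-chosen $S$ as an initial segment of an $M$-generic for $\bbS_{j(\ka)}$ (which is legitimate because the collapse makes $S$ a bounded, non-stationary, non-reflecting subset of $j(\ka)$ in the target); and finally build a generic for $j(\bbP)$ over the lift, using the $\la$-closure of $M$ and the $\om_1$-stationary-preserving and $\om_1$-diamond-preserving induction hypotheses of Section \ref{sec5} to keep the relevant subset of $[\la]^\om$ stationary in the target model so that the elementarity of the lifted embedding reflects a witness $I\in[\la]^{\om_1}$ back to $V_3$. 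Verifying that $S$ can be threaded through $j(\bbS_{\om_2})$ without violating non-reflection at $j(\ka)$, and that the good-coloring quotient stays suitably generic over the lift, is the delicate heart of the argument; note that this uses $j$ only at $\la\ge\om_3>\ka$, so it is fully compatible with the deliberate failure of $\FRP(\om_2)$.
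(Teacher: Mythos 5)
Your overall architecture (force with $\coll(\om_1,<\ka)$, then add a non-reflecting stationary $S^*\subseteq\om_2\cap\Cof(\om)$ with $\bbS_{\om_2}$, then run the good-coloring iteration of Proposition \ref{ite}; kill $\Refl(\col,\om_2)$ with the surviving non-reflecting set; recover $\Refl(\List)$ by induction via (4), Proposition \ref{1.3} and Proposition \ref{1.7}) is exactly the paper's, and you correctly identify that everything rests on re-establishing $\FRP(\la)$ for regular $\la>\om_2$. But the justification you give for the one step you yourself flag as delicate is wrong. You claim that the tail collapse $\coll(\om_1,[\ka,j(\ka)))$ makes $S^*$ non-stationary in $\ka$, hence a condition of $j(\bbS_{\om_2})=\bbS_{j(\ka)}$. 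That collapse is $\sigma$-closed, and $\sigma$-closed forcing \emph{preserves} the stationarity of stationary subsets of $\ka\cap\Cof(\om)$: given a name $\dot C$ for a club and a condition $p$, work inside a countable $N\prec\calH_\theta$ with $\sup(N\cap\ka)\in S^*$, build a descending $\om$-chain of conditions forcing ordinals of $\dot C$ cofinally below $\sup(N\cap\ka)$, and take a lower bound. So in $M[j(G)]$ the set $S^*\cap\ka=S^*$ is still stationary in $\ka<j(\ka)$, and $S^*$ is \emph{not} a condition in $\bbS_{j(\ka)}$; the lift through $\bbS_{\om_2}$ cannot even start.

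The paper's repair is an extra ingredient your sketch omits entirely: a club-shooting poset $\bbS'$ (closed bounded subsets of $\ka$ disjoint from $S^*$, ordered by end-extension) that explicitly destroys the stationarity of $S^*$ in $\ka$. The two observations that make this work are (i) $\bbS_{\om_2}*\bbS'$ has an $\om_2$-closed dense subset, so $\coll(\om_1,<\ka)*\bbS*\bbS'*\coll(\om_1,[\ka,j(\ka)))$ is forcing equivalent to $\coll(\om_1,<j(\ka))$ and the lift of $j$ can be routed through $\bbS'$; and (ii) $\bbS*\bbP_{\om_3}*\bbS'$ is forcing equivalent to $\bbS*\bbS'*\bbP_{\om_3}$, so $\bbS'$ remains $\sigma$-Baire (indeed the whole composition is $\sigma$-Baire) over the final model and the stationarity of $D\subseteq[\la]^\om$ survives the detour. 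Only after forcing with $\bbS'$ is $S^*$ a non-reflecting bounded subset of $j(\ka)$, i.e.\ a condition of $j(\bbS)$, at which point the rest of your plan (the dense embedding of $\bbP_{\om_3}$ into $j(\bbP_{\om_3})\cap j``N$ and the $\om_1$-stationary preservation of the quotient) goes through as the paper carries it out. Without $\bbS'$ or some equivalent device for killing the stationarity of $S^*$ before threading it into $j(\bbS)$, the argument does not close.
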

\let\thethm\temp
\addtocounter{thm}{-1}

\begin{proof}[Proof of Theorem \ref{thm2} (1)]

Suppose GCH. Let $\ka$ be a supercompact cardinal.
First, force with the poset $\coll(\om_1, <\ka)$.
Let $G$ be $(V, \coll(\om_1, <\ka))$-generic.
We know $\ka=\om_2=2^{\om_1}$ in $V[G]$.
Let $\bbS=\bbS_\ka$, and take a $(V[G], \bbS)$-generic $H$.
We work in $V[G][H]$.
In $V[G][H]$, $\ka=\om_2$, GCH holds, and $S^*=\bigcup H$ is a non-reflecting stationary 
subset of $\om_2 \cap \Cof(\om)$.
Let $\bbP_{\om_3}$ be an $\om_3$-stage countable support iteration of the $\bbP(X,F)$'s which forces:
\begin{enumerate}
\item $\diamondsuit(S)$ holds for every stationary $S \subseteq \om_1$.
\item For every graph $X$ of size $\om_2$,
if $\col(X) \le \om_1$ and $\col(Y) \le \om$ for every $Y \in [X]^{\om_1}$,
then $\List(X) \le \om$.
\item $2^\om=\om_1$ and $2^\la=\la^+$ for every $\la \ge \om_2$.
\end{enumerate}
As in the proof of Theorem \ref{thm1},
$\bbP_{\om_3}$ forces that:
\begin{enumerate}
\item[(4)] For every graph $X$ of size $\om_2$,
if $\List(Y) \le \om$ for every $Y \in [X]^{\om_1}$
then $\List(X) \le \om$.
\end{enumerate}
Since $\bbP_{\om_3}$ satisfies the $\om_2$-c.c.,
it also forces that:
\begin{itemize}
\item[(5)] There is a non-reflecting stationary subset of $\om_2 \cap \Cof(\om)$,
hence $\Refl(\col, \om_2)$ fails.
\end{itemize}
Now we prove that $\bbP_{\om_3}$ also forces that:
\begin{itemize}
\item[(6)] $\FRP(\la)$ holds for every regular $\la>\om_2$.
\end{itemize}
Then we can deduce that $\Refl(\List)$ holds in the generic extension as follows.
We do this by induction on size of graphs.
The $\om_2$ case follows from (4).
Let $X$ be a graph with size $>\om_2$, and suppose every $Y \in [X]^{\om_1}$ has 
countable list-chromatic number.
By the induction hypothesis, we have that every subgraph of $X$ with size $<\size{X}$ has
countable list-chromatic number.
If $\size{X}$ is singular, then we have $\List(X) \le \om$
by (3) and Proposition \ref{1.3}.
If $\size{X}$ is regular,
then we are done by (1), (6) and Proposition \ref{1.7}.

To show (6), consider the following poset $\bbS'$ defined in $V[G][H]$.
$\bbS'$ is the poset consists of all closed bounded subsets $p$ in $\ka$ with
$p \cap S^*=\emptyset$.
Define $p \le q$ if $p$ is an end-extension of $q$.
Clearly $\size{\bbS'}=\om_2$, so $\bbS'$ satisfies the $\om_3$-c.c.
Moreover $\bbS'$ forces that ``$S^*$ is non-stationary''.
The following is straightforward:
\begin{lemma}
In $V[G]$,
let  $D=\{\seq{p, \dot q} \in \bbS *\bbS'\mid 
p \Vdash_\bbS$``$\sup(p)=\max(\dot q)$''$\}$.
Then $D$ is dense in $\bbS*\bbS'$ and
is $\om_2$-closed.
In particular, $\bbS'$ is $\om_2$-Baire in $V[G][H]$.
\end{lemma}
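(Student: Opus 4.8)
The plan is to verify the three assertions in turn --- density of $D$, $\om_2$-closure of $D$, and the consequence for $\bbS'$ --- the decisive structural fact being that in $V[G]$ the poset $\bbS=\bbS_\ka$ is $\ka$-strategically closed, hence $\ka$-Baire, hence adds no new bounded subset of $\ka=\om_2$. In particular every element of $\bbS'$ named in a generic extension already lies in $V[G]$, and every club or stationary subset of an ordinal below $\ka$ is the same in $V[G]$ and in $V[G][H]$.

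For density, I would first decide $\dot q$: since $p\Vdash\dot q$ is a bounded subset of $\ka$, I would find $p_1\le p$ and $\beta_0<\ka$ with $p_1\Vdash\sup(\dot q)=\beta_0$, and then, running Player II's winning strategy in $\Gamma_\ka(\bbS)$ against an opponent who successively decides the characteristic function of $\dot q$ on $\beta_0$ (a $<\ka$-long task, so the length-$\ka$ game has ample room), obtain $p_0\le p_1$ and a concrete closed bounded $q\in V[G]$ with $p_0\Vdash\dot q=\check q$. Observe that $p_0\Vdash q\cap S^*=\emptyset$ forces every element of $q$ above $\sup(p_0)$ to have uncountable cofinality, since any $\om$-cofinal ordinal above $\sup(p_0)$ could be absorbed into the generic by a further end-extension. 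Now I would choose $\gamma<\ka$ with $\cf(\gamma)=\om_1$ above both $\sup(p_0)$ and $\max(q)$, and set $q'=q\cup\{\gamma\}$ (closed, and $\gamma\notin\Cof(\om)\supseteq S^*$) and $p'=p_0\cup c$, where $c\subseteq(\sup(p_0),\gamma)\cap\Cof(\om)$ is cofinal in $\gamma$ of order type $\om_1$ and chosen non-reflecting (for instance as the successor-indexed points of a continuous cofinal sequence, so that $c\cap\alpha$ is non-stationary in $\alpha$ for every $\alpha\le\gamma$). Then $p'\in\bbS$ with $\sup(p')=\gamma\notin p'$, and $p'\Vdash q'\in\bbS'$ using $S^*\cap\gamma=p'$ and $q'\cap p'=\emptyset$; thus $\seq{p',\check{q'}}\le\seq{p,\dot q}$ lies in $D$.

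For $\om_2$-closure, let $\seq{\seq{p_\xi,\dot q_\xi}\mid\xi<\eta}$ be descending in $D$ with $\eta<\om_2$ (we may assume the $\sup(p_\xi)$ strictly increase, the stabilizing case being trivial), and put $p^-=\bigcup_\xi p_\xi$ and $\beta=\sup_\xi\sup(p_\xi)<\ka$. The crux is non-reflection of $p^-$ at $\beta$, and this is exactly where membership in $D$ is used: because $\max(\dot q_\xi)=\sup(p_\xi)$ while the $\dot q_\xi$ end-extend one another and are closed, the concrete cofinal set $C=\{\sup(p_\xi)\mid\xi<\eta\}\subseteq\beta$ has the feature that each point of $\overline{C}\setminus\{\beta\}$ is forced by some $p_\zeta$ to lie in $\dot q_\zeta$; since $\bigcup_\zeta\dot q_\zeta$ is forced disjoint from $S^*\supseteq p^-$, a standard density computation shows in $V[G]$ that the genuine club $\overline{C}\cap\beta$ is disjoint from $p^-$, so $p^-$ is non-stationary in $\beta$ (non-reflection at every $\alpha<\beta$ is inherited from the individual $p_\xi$ by end-extension). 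If $\cf(\eta)\ge\om_1$ then $\cf(\beta)\ge\om_1$, and I would take $p=p^-$ and $\dot q=\bigcup_\xi\dot q_\xi\cup\{\beta\}$; closedness of the $\dot q_\xi$ makes this name closed, $\beta\notin\Cof(\om)$ gives $p\Vdash\dot q\cap S^*=\emptyset$, and $\seq{p,\dot q}\in D$ is a lower bound. If $\cf(\eta)=\om$ then $\beta\in\Cof(\om)$ and $p^-$ cannot top a $D$-condition, so I would push the supremum up as in the density step: pick $\gamma>\beta$ with $\cf(\gamma)=\om_1$, set $p=p^-\cup c$ and $\dot q=\bigcup_\xi\dot q_\xi\cup\{\beta,\gamma\}$; now $\beta$ is an interior point with $\beta\notin p$, so $p\Vdash\beta\notin S^*$ via $S^*\cap\gamma=p$, and $\seq{p,\dot q}\in D$ is again a lower bound.

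Finally, since $D$ is dense and $\om_2$-closed, $\bbS*\bbS'$ is $\om_2$-Baire over $V[G]$; as $\bbS$ is itself $\om_2$-Baire and $\bbS'$ is (a dense subset of) the quotient $(\bbS*\bbS')/H$, any new $<\om_2$-sequence added by $\bbS'$ over $V[G][H]$ would already belong to $V[G]\subseteq V[G][H]$, a contradiction; hence $\bbS'$ is $\om_2$-Baire in $V[G][H]$. I expect the main obstacle to be the non-reflection of $p^-$ at the limit $\beta$ together with the bookkeeping of the countable-cofinality case; the synchronization $\sup(p_\xi)=\max(\dot q_\xi)$ built into $D$ is precisely what supplies the witnessing club $\overline{C}\subseteq\bigcup_\xi\dot q_\xi\cup\{\beta\}$ and thereby resolves it.
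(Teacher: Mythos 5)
Your argument is correct, and it is exactly the standard argument the paper leaves implicit by declaring the lemma ``straightforward'': decide $\dot q$ using the $\ka$-strategic closure of $\bbS$, push the supremum of the first coordinate up to an ordinal of cofinality $\om_1$ to get into $D$, and at limits use the synchronization $\sup(p_\xi)=\max(\dot q_\xi)$ to produce the club $\overline{C}\cap\beta$ witnessing non-reflection of $\bigcup_\xi p_\xi$ at $\beta$. The only slip is cosmetic: the ``successor-indexed points of a continuous cofinal sequence'' need not lie in $\Cof(\om)$, so one should take, say, $\{\gamma_i+\om\mid i<\om_1\}$ for a continuous cofinal $\seq{\gamma_i\mid i<\om_1}$ to get the required non-reflecting set $c$.
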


Now take a $(V[G][H], \bbP_{\om_3})$-generic filter $G^*$.
$\bbS *\bbP_{\om_3} * \bbS'$ is forcing equivalent to
$\bbS * \bbS'* \bbP_{\om_3}$.
Since $\bbS * \bbS'$ has a  $\sigma$-closed dense subset,
and $\bbP_{\om_3}$ is $\sigma$-Baire in $V[G]^{\bbS*\bbS'}$,
we know that $\bbS *\bbP_{\om_3} * \bbS'$ is $\sigma$-Baire.
Thus $\bbS'$ remains $\sigma$-Baire in $V[G][H][G^*]$.

Fix a regular cardinal $\la >\om_2$. We also fix a sufficiently large regular cardinal $\theta>\om_3+\la$,
and let $N=\calH_{\theta}^{V[G][H]}$.
To show that $\FRP(\la)$ holds in $V[G][H][G^*]$, take a  stationary $E \subseteq \la \cap \Cof(\om)$
and $g:E \to [\la]^\om$ with $g(\alpha) \in [\alpha]^\om$.
Let $D=\{x \in [\la]^\om\mid \sup(x) \in E, g(\sup(x)) \subseteq x\}$.
$D$ is stationary in $[\la]^\om$.
Take a $(V[G][H][G^*], \bbS')$-generic $H'$, and
let $V^*=V[G][H][G^*][H']$.
$S^*$ is non-stationary in $\om_2$ in $V^*$.
Since $\bbS'$ satisfies the $\om_3$-c.c., we have that
$E$ is stationary in $V^*$.
Moreover, since $\bbS'$ is $\sigma$-Baire, 
it is easy to show that $D$ remains stationary in $V^*$.

In $V$, take a $\theta$-supercompact embedding $j:V \to M$.
We note $j``N \in M$.
Since 
$\bbS *\bbS'$ has a $\sigma$-closed dense subset,
we know that $\coll(\om_1, <j(\ka))$ is forcing equivalent
to
$\coll(\om_1, <\ka)*\bbS*\bbS'*\coll(\om_1,[\ka, j(\ka)))$.
Take a $(V^*, \coll(\om_1,[\ka, j(\ka))))$-generic
$G_{tail}$.
Note that $G^*$ is generic over $V[G][H][H'][G_{tail}]$.

In $V^*[G_{tail}]$, we can construct a $(V, \coll(\om_1, <j(\ka)))$-generic $j(G)$
such that $j(G) \cap \coll(\om_1, <\ka)=G$ and $V[G][H][H'][G_{tail}]=V[j(G)]$.
Then $j: V \to M$ can be extended to $j:V[G] \to M[j(G)]$ in $V^*[G_{tail}]$
(actually in $V[G][H][H'][G_{tail}]$). $M[j(G)]$ is still closed under $\om$-sequences in $V^*[G_{tail}]$.

Since $\coll(\om_1, [\ka, j(\ka)))$ is $\sigma$-closed in $V^*$,
$D$ remains stationary in $[\la]^\om$ in $V^*[G_{tail}]$.
We know that $S^* \in M[j(G)]$ and is non-stationary in $\ka$ since $H' \in M[j(G)]$.
So $S^*$ is a non-reflecting bounded subset of $j(\ka)$ in $M[j(G)]$,
and $S^*$ is a condition in $j(\bbS)$.
Take a $(V^*[G_{tail}], j(\bbS))$-generic $j(H)$ with $S^* \in j(H)$.
In $V^*[G_{tail}][j(H)]$, $j$ can be extended to
$j:V[G][H] \to M[j(G)][j(H)]$.
$j(\bbS)$ is $j(\ka)$-strategically closed in $M[j(G)]$ and
$M[j(G)]$ is closed under $\om$-sequences in $V^*[G_{tail}]$.
Hence $j(\bbS)$ is $\om_1$-strategically closed in $V^*[G_{tail}]$,
and $D$ remains  stationary in $V^*[G_{tail}][j(H)]$.
Again, 
because $M[j(G)] \subseteq V[G][H][H'][G_{tail}]$,
we have that $G^*$ is generic over $V[G][H][H'][G_{tail}][j(H)]$.

We know that $j``N \in M \subseteq M[j(G)][j(H)]$,
$\size{j``N}=\om_1$ in $M[j(G)][j(H)]$, and 
$j``N$ is a $\sigma$-closed  elementary submodel of $j(\calH_\theta^{V[G][H]}) \in M[j(G)][j(H)]$.
Hence $j(\bbP_{\om_3}) \cap j``N$ is a complete suborder of $j(\bbP_{\om_3})$.
The following is straightforward:
\begin{claim}
$j \restriction \bbP_{\om_3}$ is a dense embedding
from $\bbP_{\om_3}$ to $j(\bbP_{\om_3}) \cap j``N$.
\end{claim}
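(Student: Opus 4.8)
The plan is to show that $j \restriction \bbP_{\om_3}$ is not merely a dense embedding but an order isomorphism of $\bbP_{\om_3}$ onto the suborder $j(\bbP_{\om_3}) \cap j``N$; density then follows automatically. Everything reduces to the elementarity of the lifted embedding $j \colon V[G][H] \to M[j(G)][j(H)]$ together with the transitivity of $N$.

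The first step is to observe that $\bbP_{\om_3} \subseteq N$. Because $\theta$ is taken sufficiently large, $\bbP_{\om_3} \in \calH_\theta^{V[G][H]} = N$, using that a countable-support condition, together with the canonical names it carries, has hereditary cardinality far below $\theta$ (this is the one place where the cardinal arithmetic of $V[G][H]$ and the choice of $\theta$ enter). Since $N$ is transitive, $\bbP_{\om_3} \in N$ gives $\mathrm{trcl}(\bbP_{\om_3}) \subseteq N$, and in particular every condition of $\bbP_{\om_3}$ lies in $N$.

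Next I would pin down the range. For $p \in \bbP_{\om_3}$, elementarity gives $j(p) \in j(\bbP_{\om_3})$, and $p \in N$ gives $j(p) \in j``N$, so $j``\bbP_{\om_3} \subseteq j(\bbP_{\om_3}) \cap j``N$. Conversely, any $q$ in the intersection is of the form $q = j(r)$ for some (unique, by injectivity) $r \in N$; from $j(r) = q \in j(\bbP_{\om_3})$ and elementarity applied to the formula ``$v \in \bbP_{\om_3}$'' we obtain $r \in \bbP_{\om_3}$, hence $q \in j``\bbP_{\om_3}$. Therefore $j``\bbP_{\om_3} = j(\bbP_{\om_3}) \cap j``N$, the range is all of the target poset, and in particular it is dense.

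Finally, for $p, p' \in \bbP_{\om_3}$, elementarity yields $p \le p' \iff j(p) \le j(p')$ and $p \perp p' \iff j(p) \perp j(p')$ computed in $j(\bbP_{\om_3})$; and since $j(\bbP_{\om_3}) \cap j``N$ has already been shown to be a complete suborder of $j(\bbP_{\om_3})$, incompatibility inside the suborder coincides with incompatibility in $j(\bbP_{\om_3})$. Thus $j \restriction \bbP_{\om_3}$ is a bijection onto $j(\bbP_{\om_3}) \cap j``N$ preserving and reflecting both $\le$ and $\perp$, i.e.\ an order isomorphism, and a fortiori a dense embedding. The only step requiring genuine care is the inclusion $\bbP_{\om_3} \subseteq N$; once that is secured, the statement is a purely formal consequence of elementarity and of the complete-suborder fact already established.
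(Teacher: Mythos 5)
Your argument is correct, and since the paper declares this claim ``straightforward'' and gives no proof, your write-up is exactly the intended verification: $\bbP_{\om_3}\in\calH_\theta^{V[G][H]}=N$ and transitivity give $\bbP_{\om_3}\subseteq N$, whence elementarity of the lifted $j$ shows $j``\bbP_{\om_3}=j(\bbP_{\om_3})\cap j``N$ and that $j\restriction\bbP_{\om_3}$ preserves and reflects $\le$ and $\perp$, so it is an order isomorphism onto the target and a fortiori a dense embedding. No gaps; the one point you rightly flag as needing care (the choice of $\theta$ large enough that $\bbP_{\om_3}$ has hereditary cardinality below $\theta$) is exactly what the paper's ``sufficiently large regular $\theta>\om_3+\la$'' is for.
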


Now $M[j(G)][j(H)] \subseteq V[G][H][H'][G_{tail}][j(H)]$
and $G^*$ is generic over \\
$V[G][H][H'][G_{tail}][j(H)]$.
Hence $j$ and $G^*$ induce the filter $G_0$ on $j(\bbP_{\om_3}) \cap j``N$ which is
generic over $V[G][H][H'][G_{tail}][j(H)]$.
In $M[j(G)][j(H)][G_0]$,
we can consider  the quotient $j(\bbP_{\om_3})/G_0$.
Since $j \rest \bbP_{\om_3} \in M[j(G)][j(H)]$,
we know that $G^* \in M[j(G)][j(H)][G_0]$.
Using this observation,
We can check that $j``D \in M[j(G)][j(H)][G_0]$,
$j``D$ is stationary in $[j``\la]^\om$, and  $\size{j``\la}=\om_1$ in $M[j(G)][j(H)][G_0]$.

Take a $(V^*[G_{tail}][j(H)], j(\bbP_{\om_3})/G_0))$-generic $j(G^*)$.
We can canonically extend $j:V[G][H] \to M[j(G)][j(H)]$ to
$j:V[G][H][G^*] \to M[j(G)][j(H)][j(G^*)]$.
$M[j(G)][j(H)][G_0]$ thinks that
$j(\bbP_{\om_3})/G_0$ is $\om_1$-stationary preserving,
thus we have that $j``D$ remains stationary 
in $[j``\la]^\om$ in $M[j(G)][j(H)][j(G^*)]$.
Hence in $M[j(G)][j(H)][j(G^*)]$,
$j``\la$ and $j``D$ witness the statement that 
``there is $I \in [j(\la)]^{\om_1}$ such that
$\sup(I) \notin I$, $\cf(\sup(I))=\om_1$,
and $\{x \in [I]^\om\mid  \sup(x) \in j(E), j(g)(\sup(x)) \subseteq x\}$ 
is stationary in $[I]^\om$''.
By the elementarity of $j$,
it holds in $V[G][H][G^*]$ that 
``there is $I \in [\la]^{\om_1}$ such that
$\sup(I) \notin I$, $\cf(\sup(I))=\om_1$,
and $\{x \in [I]^\om\mid  \sup(x) \in E, g(\sup(x)) \subseteq x\}$ 
is stationary in $[I]^\om$''.
This completes the proof of the condition (6).
\end{proof}

Finally let us pose some questions about $\Refl(\List)$.

\begin{question}
Does $\Refl(\List)$ imply some strong or interesting consequences?
\end{question}
At the moment, we know only Lemma \ref{CH} and Proposition \ref{cons of list}.
The following is a test question:
It is known that the singular cardinal hypothesis follows from $\Refl(\col)$
(Fuchino-Rinot \cite{FR}), hence we would like to ask:
\begin{question}
Does $\Refl(\List)$ imply the singular cardinal hypothesis?
\end{question}
If this question has a positive answer, we can improve the lower bound of the consistency strength
of $\Refl(\List)$.

By Theorem \ref{thm1}, $\Refl(\List, \la)$ does not have a large cardinal strength.
However $2^{\om_1}>\la$ in the resulting model of Theorem \ref{thm1}.
\begin{question}
Does $\Refl(\List, 2^{\om_1})$ have a large cardinal strength?
\end{question}

In the proofs involving $\Refl(\List)$, we always assumed the diamond principle.
\begin{question}
Does $\Refl(\List)$ imply $\diamondsuit(\om_1)$, or $\diamondsuit(S)$ for every stationary $S \subseteq \om_1$?
\end{question}
Concerning this question, Sakai (\cite{Sakai}) told us that the Game Reflection Principle $\mathsf{GRP}$ introduced
by K\"onig \cite{Konig} implies both $\FRP$ and $\diamondsuit(S)$ for every stationary $S \subseteq \om_1$,
hence by Proposition \ref{1.7} it implies $\Refl(\List)$.

\section*{Acknowledgments}

The author would like to greatly thank the referee for many valuable comments and useful suggestions.
This research was supported by JSPS KAKENHI Grant Nos. 18K03403 and 18K03404.

\end{document}